\newtheorem{thm}[equation]{Theorem}
\newtheorem{lem}[equation]{Lemma}
\newtheorem{cor}[equation]{Corollary}
\newtheorem{prop}[equation]{Proposition}
\theoremstyle{definition}
\newtheorem{eg}[equation]{Example}
\theoremstyle{remark}
\newtheorem{rmk}[equation]{Remark}
\numberwithin{equation}{section}
\newcommand{\case}[1]{\smallskip\emph{\underline{#1}}:}
\DeclareMathOperator{\Spin}{Spin}           
\DeclareMathOperator{\Sp}{Sp}
\DeclareMathOperator{\PSp}{PSp}
\DeclareMathOperator{\SL}{SL}
\DeclareMathOperator{\GL}{GL}
\DeclareMathOperator{\PGL}{PGL}
\DeclareMathOperator{\PSO}{PSO}
\DeclareMathOperator{\HSpin}{HSpin}
\newcommand{\SO}{\mathrm{SO}}
\DeclareMathOperator{\trdeg}{trdeg}
\DeclareMathOperator{\End}{End}
\DeclareMathOperator{\Id}{Id}
\DeclareMathOperator{\rank}{rank}
\DeclareMathOperator{\car}{char}
\newcommand{\ot}{\otimes}
\DeclareMathOperator{\Lie}{Lie}
\DeclareMathOperator{\Out}{Out}
\newcommand{\C}{\mathbb{C}}
\newcommand{\R}{\mathbb{R}}
\newcommand{\Z}{\mathbb{Z}}
\newcommand{\Q}{\mathbb{Q}}
\newcommand{\G}{\Gamma}
\newcommand{\kx}{k^\times}
\newcommand{\eand}{\quad\text{and}\quad}
\newcommand{\so}{\mathfrak{so}}
\newcommand{\la}{\lambda}
\newcommand{\s}{\sigma}
\newcommand{\cO}{\mathscr{O}}
\newcommand{\E}{\mathscr{E}}
\newcommand{\V}{\mathscr{V}}
\newcommand{\Gc}{\mathscr{G}}
\newcommand{\Rc}{\mathscr{R}}
\newcommand{\Tc}{\mathscr{T}}
\newcommand{\Gct}{\tilde{\mathscr{G}}}
\newcommand{\Tct}{\tilde{\mathscr{T}}}
\newcommand{\qform}[1]{{\left\langle{#1}\right\rangle}}                   
\newcommand{\Mb}{\bar{M}}
\newcommand{\g}{\mathfrak{g}}
\newcommand{\fc}{\mathfrak{c}}
\newcommand{\pgl}{\mathfrak{pgl}}
\newcommand{\kalg}{k_{\mathrm{alg}}}
\newcommand{\Sred}{S\!_{\mathrm{red}}}
\renewcommand{\sl}{\mathfrak{sl}}
\DeclareMathOperator{\im}{im}
\newcommand{\fL}{\mathfrak{L}}
\newcommand{\X}{G}
\newcommand{\Y}{H}
\newcommand{\Yt}{\tilde{H}}
\DeclareMathOperator{\Sym}{Sym}
\DeclareMathOperator{\OO}{O}
\DeclareMathOperator\PP{\mathbb{P}}
\begin{document}

\title{Simple groups stabilizing polynomials}
\author{Skip Garibaldi}
 
\address{Institute for Pure and Applied Mathematics, UCLA, 460 Portola Plaza, Box 957121, Los Angeles, California 90095-7121, USA}
  \email{skip@garibaldibros.com}

\author{Robert M. Guralnick}
 
\address{Department of Mathematics, University of Southern California,
Los Angeles, CA 90089-2532, USA}
\email{guralnic@usc.edu}

\begin{abstract}
We study the problem of determining, for a polynomial function $f$ on a vector space $V$, the linear transformations $g$ of $V$ such that $f \circ g = f$.  In case $f$ is invariant under a simple algebraic group $G$ acting irreducibly on $V$, we note that the subgroup of $\GL(V)$ stabilizing $f$ often has identity component $G$ and we give applications realizing various groups, including the largest exceptional group $E_8$, as automorphism groups of polynomials and algebras.  We show that starting with a simple group $G$ and an irreducible representation $V$, one can almost always find an $f$ whose stabilizer has identity component $G$ and that no such $f$ exists in the short list of excluded cases.  This relies on our core technical result, the enumeration of inclusions $G < H \le \SL(V)$ such that $V/H$ has the same dimension as $V/G$.
The main results of this paper are new even in the special case where $k$ is the complex numbers.
\end{abstract}
\subjclass[2010]{20G15 (primary); 15A72, 20G41 (secondary)}
 
\maketitle  

\section{Introduction}

The following problem appears in a variety of contexts: Given a polynomial $f$ in $n$ variables, determine the linear transformations $g$ such that $f \circ g = f$.  For the $f$'s studied here it is obvious that such a $g$ must be invertible, so the answer will be a subgroup of $\GL_n$.  Frobenius's 1897 paper \cite{Frobenius} and Dieudonne's 1943 paper \cite{Dieu:LPP} are both aimed at solving the special case where $f$ is the determinant.  Solutions to this problems appear in many places in algebra as well as in geometric complexity theory, see for example \cite{MuS}.

This problem is typically solved using arguments that are special to the particular polynomial $f$ being studied.  Here we show that a single result gives the answer for a large class of $f$'s.  Roughly speaking, if $f$ is defined on a vector space $V$ and is invariant under the action of a simple algebraic group $G$ that acts irreducibly on $V$, we show that ``typically'' the stabilizer $O(f)$ of $f$ has identity component $G$.  With this in hand, it is not hard to determine the full group $O(f)$.

This can be viewed as a sort of reverse invariant theory.  Suppose an algebraic group $G$ acts on a vector space $V$ and you pick a $G$-invariant polynomial $f$ on $V$.  The stabilizer $O(f)$ contains $G$ but a priori might be bigger.  It is known, for example, that in case $G$ is a semisimple adjoint complex Lie group and $V = \Lie(G)$, then $G$ is the identity component of $\cap_{f \in \C[V]^G} O(f)$, see \cite{Dixmier}.  In contrast, we show in \S\ref{canonical.sec} below that for simple $G$ apart from $\SO_5$, not only is no intersection necessary, but a single homogeneous generator $f$ of $\C[V]^G$ will do.  As further illustrative examples, we show (1) that $E_8$ is the (identity component of) the automorphism group of an octic form in 248 variables and of a 3875-dimensional algebra, see sections \ref{E8.sec}, \ref{E8.adj.sec}, and \ref{E8.3875.sec}; and (2) that, up to isogeny and excluding fields of small characteristic, every simple group is the identity component of the stabilizer of a \emph{cubic} form, see \S\ref{cubic.sec}.  This latter example shows that the degree of a homogeneous $f$ need not give any information about the identity component of $O(f)$.

The core idea in this paper is that there cannot be many closed connected overgroups $H$ such that $G < H \le \SL(V)$ and that furthermore there are extremely few such $H$ such that $V/H$ and $V/G$ have the same dimension, equivalently, the field $k(V)^G$ is an algebraic extension of $k(V)^H$.\footnote{In contrast, for finite $G$ and $H$, every inclusion $G < H < \SL(V)$ leads to a proper algebraic extension $k(V)^G \supsetneq k(V)^H$ by the fundamental theorem of Galois theory.} Indeed, there are so few such $H$ that we can enumerate them in Theorem \ref{distinctinvariants}.  Because of this, we can show that for most pairs $(G, V)$, there is a polynomial $f$ whose stabilizer has identity component $G$ and that in the excluded cases there is no such $f$.

We work with both groups (in the naive sense) as well as affine group schemes over an arbitrary field $k$.  Our Theorems \ref{phys}, \ref{e8}, \ref{canonical}, \ref{for:cubic0}, \ref{cubic}, \ref{distinctinvariants}, and \ref{MT} are new already in the case where $k$ is the complex numbers, and Theorem \ref{generic} is an analogue for $k$ an algebraically closed field of prime characteristic that was previously known only in characteristic zero.


\subsection*{Notation and conventions}
An \emph{algebraic group scheme} over a field $k$ is an affine group scheme of finite type over $k$ as defined in \cite{SGA3:new}.  An \emph{algebraic group} is an algebraic group scheme that is smooth.  For an algebraic group scheme $G$ over $k$ and an extension $k'$ of $k$, we put $G(k')$ for the group of $k'$-points of $G$, i.e., the $k$-algebra homomorphisms $k[G] \to k'$; it is a (concrete) group.  If $G$ is smooth and (a) $k$ is algebraically closed or (b) $k$ is infinite and $G$ is reductive, then $G(k)$ is Zariski-dense in $G$ and sometimes in these cases we will conflate $G$ and $G(k)$ as is commonly done.

For a finite-dimensional vector space $V$ over $k$, we write $k[V]$ for the ring of polynomial functions on $V$ with coefficients in $k$, i.e., for the symmetric algebra on the dual space of $V$.  The \emph{(naive) stabilizer} in $\GL(V)$ of an $f \in k[V]$ is the (concrete) subgroup $\{ g \in \GL(V) \mid \text{$f \circ g = f$ in $k[V]$} \}$.  The \emph{scheme-theoretic stabilizer} of $f$ is the sub-group-scheme of $\GL(V)$ centralizing $f$ in the sense of \cite[A.1.9]{CGP}.  In ``most" cases, such as if  $\car k$ is zero or larger than some bound depending on $V$ and $f$, the scheme-theoretic stabilizer will be smooth; if additionally (a) or (b) from the previous paragraph hold, then the two notions of stabilizer coincide.

The rational irreducible representations (the \emph{irreps}) of a simple algebraic group $G$ are denoted $L(\la)$ where $\la$ is a dominant weight for $G$.  (We only consider rational representations in this paper.)  Each $\la$ can be written uniquely as a sum of fundamental dominant weights $\la_1, \ldots, \la_r$ of $G$, which we number as in \cite[Chap.~VI, Plates I--IX]{Bou:Lie}.  If $k$ has prime characteristic $p$, the \emph{restricted} representations are those $L(\sum c_i \la_i)$ such that $0 \le c_i < p$ for all $i$, and every irreducible representation can be expressed uniquely as a tensor product of Frobenius twists of restricted ones.  If $k$ has characteristic 0, then every irrep is restricted, by definition.

\section{Reminders on group actions}  \label{remind.sec}

Suppose $G$ is a connected algebraic subgroup of $\GL(V)$.  For each $v \in V$, the dimension of the $G$-orbit $Gv$ and the stabilizer $G_v$ of $v$ are related by the equation $\dim Gv + \dim G_v = \dim G$, as follows by applying the fiber dimension theorem \cite[\S13.3]{EGA4.3} to the map $G \to V$ defined via $g \mapsto gv$.

We define $k[V]^G$ to be the subring of $k[V]$ consisting of elements $f$ that are sent to $f \ot 1$ under the comodule map $k[V] \to k[V] \otimes k[G]$.  If $G$ is reductive and $k$ is infinite, then $G(k)$ is dense in $G$ and 
\[
k[V]^G = \{ f \in k[V] \mid \text{$f \circ g = f$ in $k[V]$ for all $g \in G(k)$} \},
\]
i.e., the collection of $f \in k[V]$ whose naive stabilizers contain $G(k)$.

Put $\kalg$ for an algebraic closure of $k$.  As in \cite[Th.~2]{Rosenlicht:basic}, \cite{Sesh:GR}, or \cite[\S2]{PoV}, there is a nonempty and $G$-invariant open subset $U$ of $V \ot \kalg$ such that 
\[
   \trdeg_{/k} k(V)^G = \dim V - \dim Gv   \quad \text{for $v \in U$}.
\]
On the other hand, if $G$ is semisimple, an easy argument as in \cite[Th.~3.3]{PoV} shows that $k(V)^G$ is the fraction field of $k[V]^G$ and we have
\begin{equation} \label{dim.eq}
\dim k[V]^G = \dim V - \dim G + \dim G_v \quad \text{for $v \in U$,}
\end{equation}
i.e., the Krull dimension of $k[V]^G$ equals the codimension of a generic orbit in $V$.  

We remark that the orbits in $U$ are orbits of maximal dimension in $V \ot \kalg$, as can be seen by applying upper semicontinuity as in \cite[Proposition VI.4.1(i)]{SGA3:new} to the collection of stabilizers, which form a group scheme over $V$.  Furthermore, if $\car k = 0$, the conjugacy class of the stabilizer of $u \in U$ does not depend on the choice of $u$, as follows from the Luna stratification \cite[Th.~7.2]{PoV}.
 Regardless of the characteristic, we know the following.
\begin{lem} \label{stab.const}
Let $G$ be a connected algebraic group acting on an irreducible variety $V$ over an algebraically closed field $k$ such that $\dim G_v = 0$ for some $v \in V$.  Then there exists a nonempty open subvariety $U$ of $V$ such that $|G_v(k)|$ is finite and constant for $v \in U$.
\end{lem}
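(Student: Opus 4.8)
The plan is to reduce the lemma to the classical fact that a dominant morphism of irreducible varieties of equal dimension over an algebraically closed field has generically constant fibre cardinality, and to apply it to the irreducible components of the reduced stabilizer group scheme. Write $\mathscr{S} \to V$ for the group scheme of stabilizers mentioned before the lemma, so that $\mathscr{S}_v = G_v$ for closed points $v$. By upper semicontinuity of the fibre dimension \cite[Proposition VI.4.1]{SGA3:new}, the locus $U_1 := \{ v \in V \mid \dim G_v = 0 \}$ is open (and automatically $G$-stable, since $G_{gv} = g G_v g^{-1}$), and it is nonempty by hypothesis; for $v \in U_1$ the finite group scheme $G_v$ has $|G_v(k)| < \infty$, so the content of the lemma is the constancy of this quantity, and from here on I only shrink $U_1$ to nonempty open subsets. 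Note that this refines, in all characteristics, the $\car k = 0$ fact recorded before the lemma that the conjugacy class of the generic stabilizer is constant.

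Since $G$ is affine, $\mathscr{S}$ is a closed subscheme of $G \times V$, so $\mathscr{S}|_{U_1} \to U_1$ is affine and of finite type; its generic fibre is $0$-dimensional (upper semicontinuity again, the closed fibres over $U_1$ being $0$-dimensional), hence finite over $k(V)$. A standard spreading-out argument then produces a nonempty open $U_2 \subseteq U_1$ over which $\mathscr{S}|_{U_2} \to U_2$ is finite. Let $\mathscr{T}$ be the reduced scheme underlying $\mathscr{S}|_{U_2}$; its defining ideal sheaf is nilpotent (Noetherian hypothesis), so for every closed point $v \in U_2$ the fibres $\mathscr{T}_v$ and $\mathscr{S}_v = G_v$ have the same underlying topological space, and therefore $|\mathscr{T}_v(k)| = |G_v(k)|$ because $k$ is algebraically closed. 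It thus suffices to show that a finite morphism $\mathscr{T} \to U_2$ with $\mathscr{T}$ reduced has constant fibre cardinality (counting $k$-points) over a nonempty open of $U_2$.

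For this, decompose $\mathscr{T}$ into its irreducible components $\mathscr{T}^{(1)}, \dots, \mathscr{T}^{(r)}$ and pass to a nonempty open $U \subseteq U_2$ so that: (i) only the components dominating $U$ survive (the images of the others are proper closed subsets, which are removed); (ii) the $\mathscr{T}^{(i)}$ become fibrewise disjoint over $U$ (the images of the pairwise intersections $\mathscr{T}^{(i)} \cap \mathscr{T}^{(j)}$, $i \ne j$, have dimension $< \dim U$ and are removed); and (iii) each $\mathscr{T}^{(i)} \to U$ — a finite dominant morphism of irreducible varieties of equal dimension — has, by the classical result, all fibres of cardinality $s_i := [\,k(\mathscr{T}^{(i)}) : k(U)\,]_{\mathrm{sep}}$. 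Then for every closed point $v \in U$ we obtain $|G_v(k)| = |\mathscr{T}_v(k)| = \sum_i |\mathscr{T}^{(i)}_v(k)| = \sum_i s_i$, which is independent of $v$, proving the lemma.

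The only ingredient that is not routine is the classical generic-constancy of fibre cardinality used in (iii), which I would cite rather than reprove. The main nuisance, by contrast, is the positive-characteristic bookkeeping in the middle step: one must argue with the scheme $\mathscr{S}$ rather than merely its set of $k$-points, since both $\mathscr{S}$ and its fibres can be non-reduced, and the covers $\mathscr{T}^{(i)} \to U$ can be generically inseparable. None of this disturbs the count — nilpotents do not change underlying spaces, and a purely inseparable finite cover still has constant (namely separable-degree) fibre cardinality — so the argument goes through unchanged.
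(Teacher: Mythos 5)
Your proof is correct, but it takes a genuinely different route from the paper's.

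The paper replaces the stabilizer group scheme by the map $f\colon G\times V\to V\times V$, $(g,v)\mapsto(v,gv)$, whose source $G\times V$ is \emph{irreducible} (since $G$ is connected and $V$ is irreducible) and whose fiber over $(v,gv)$ is the coset $gG_v\times\{v\}$, of cardinality $|G_v(k)|$. This map is generically finite onto its image, so Springer's Th.~5.1.6(iii) applies in a single stroke: all fibers over some nonempty open have size equal to the separable degree of $k(G\times V)/k(\overline{\im f})$, and projecting to the first $V$ factor gives $U$. The whole point of the ``thickening'' $(g,v)\mapsto(v,gv)$ is to replace the possibly reducible stabilizer scheme $\mathscr{S}\subseteq G\times V$ by the irreducible $G\times V$ itself, while keeping the fiber count the same. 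You instead work directly with $\mathscr{S}\to V$, which forces you to (a) spread out to make the morphism finite, (b) pass to the reduction, and (c) decompose into irreducible components and sum the separable degrees $s_i$. All of this is correct — the shrinking steps (i)–(iii) are each justified, and the key input is the same classical generic-constancy theorem — but it is more bookkeeping than the paper needs. Your approach does have the virtue of being the one a reader would naturally reach for first (look at the stabilizer scheme, make it finite, count), and it makes the structure $\sum_i s_i$ of the constant explicit; the paper's trick buys brevity by hiding the reducibility inside the single irreducible source $G\times V$.

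One small quibble: your remark that the lemma ``refines'' the characteristic-zero constancy of the conjugacy class of the generic stabilizer reads backwards — constancy of the conjugacy class is a \emph{stronger} statement than constancy of $|G_v(k)|$; what the lemma does is recover the weaker cardinality statement without the characteristic-zero hypothesis. This does not affect the correctness of the proof.
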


\begin{proof}
Take $Y$ to be the closure of the image of the map $f \!: G \times V \to V \times V$ with $f(g,v) = (v,gv)$.  Note that the fiber over $(v,gv)$ has $k$-points $\{(h, v) \mid hv=gv \}$
and so has cardinality equal to $|G_v(k)|$.  The hypothesis that some $G_v$ is zero-dimensional gives the same conclusion for generic $v \in V$, whereby the map $f \!: G \times V \to \overline{\im f}$ is generically finite.  It follows (by, for example, \cite[Th.~5.1.6(iii)]{Sp:LAG}) that there is a nonempty open subvariety $Y$ of $\overline{\im f}$ such that all fibers have the same size, equal to the separable degree of the finite extension of function fields $k(G \times V)/k(Y)$.  The projection of $Y$ into the first copy of $V$ contains an
open subvariety of $V$ (because the image of the morphism projects onto the first copy of $V$), hence the claim.
\end{proof}

\subsection*{Comparing invariants under $G$ and $\Lie(G)$} 
For $f \in k[V]$, we can adjoin an indeterminate $t$ to $k$ and expand, for $v, v' \in V$:
\[
f(v + tv') = f(v) + t f_1(v, v') + (\text{terms of higher degree in $t$}),
\]
where $f_1(v, v')$ is the directional derivative of $f$ at $v$ in the direction $v'$.  We say that $f$ is \emph{Lie invariant} under $X \in \End(V)$ if $f_1(v, Xv) = 0$ for all $v \in V$.  

\begin{eg} \label{lie.scalar}
If $f$ is homogeneous, then $f_1(v,v) = (\deg f) \, f(v)$ for all $v \in V$, because it is true for every monomial.  Thus: $f$ is Lie invariant under the scalar matrices iff $\car k$ divides $\deg f$.
\end{eg}

For an affine group scheme $G \le \GL(V)$, we can view $\Lie(G)$ as the fiber over $1_G$ of the map $G(k[x]/(x^2)) \to G(k)$ induced by $x \mapsto 0$.  From this, it is obvious: If $f \in k[V]$ is invariant under the group $G$, then $f$ is Lie invariant under  $\Lie(G)$.
The converse holds if $\car k = 0$ \cite[Lemma 2]{Jac:J1}, but in characteristic $p \ne 0$ the situation is more complicated.  For example, every element of the subalgebra $k[V]^{(p)}$  generated by  $\{ h^p \mid h \in k[V] \}$ is Lie invariant under $\Lie(G)$.  We have the following, which is an application of a result of Skryabin:

\begin{lem} \label{lie.inv}
Let $V$ be a representation of a semisimple algebraic group $H$ over an algebraically closed field $k$ of prime characteristic $p$.
\begin{enumerate}
\setcounter{enumi}{-1}
\item \label{lie.0} If $k[V]^H = k$, then the subring of $k[V]$ of elements Lie invariant under $\Lie(H)$ is $k[V]^{(p)}$.
\item \label{lie.1} If $k[V]^H = k[f]$ for some homogeneous $f$ of degree not divisible by $p$, then the subring of $k[V]$ of elements Lie invariant under $\Lie(H)$ is $k[V]^{(p)}[f]$, a free $k[V]^{(p)}$ module of rank $p$.
\end{enumerate}
\end{lem}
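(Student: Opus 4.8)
The plan is to compute the field $k(V)^{\Lie(H)}$ of rational functions Lie invariant under $\g := \Lie(H)$ --- that is, annihilated by all the derivations $D_X$ of $k(V)$ arising from $X \in \g$ --- and then intersect with $k[V]$, since by definition the ring of polynomial Lie invariants is $k[V] \cap k(V)^{\Lie(H)}$. Two inclusions are free: $k[V]^{(p)}$ is contained in the ring of Lie invariants because derivations kill $p$-th powers, and $k[V]^H$ is contained in it because group invariance implies Lie invariance; passing to fraction fields, $k(V)^{(p)}$ and $k(V)^H$ both sit inside $k(V)^{\Lie(H)}$.

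The key tool is a theorem of Skryabin on invariants of restricted Lie algebras (equivalently, of height-one infinitesimal group schemes), which I would apply to conclude that $k(V)^{\Lie(H)}$ equals the compositum $k(V)^{(p)}\cdot k(V)^H$ inside $k(V)$; checking its hypotheses should be routine here since $k[V]^H$ is either $k$ or the polynomial ring $k[f]$, so that the quotient $V/H$ is normal. In case (0), $k(V)^H = k$, so $k(V)^{\Lie(H)} = k(V)^{(p)}$, and since a polynomial that lies in $k(x_1^p,\dots,x_n^p)$ is already a polynomial in $x_1^p,\dots,x_n^p$ (e.g.\ because $k[V]^{(p)}$ is integrally closed and $k[V]$ is module-finite over it), we get that the ring of Lie invariants is $k[V]\cap k(V)^{(p)} = k[V]^{(p)}$. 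In case (1), $k(V)^{\Lie(H)} = k(V)^{(p)}(f)$; here one first records the routine facts that $k[V]^{(p)}[f] \cong k[V]^{(p)}[T]/(T^p-f^p)$ and that this is a free $k[V]^{(p)}$-module with basis $1, f, \dots, f^{p-1}$, the only point being that $f \notin k(V)^{(p)}$ because $\deg f$ is prime to $\car k$. Since $k[V]$ is integral over $k[V]^{(p)} \subseteq k[V]^{(p)}[f]$, the ring of Lie invariants $k[V]\cap k(V)^{(p)}(f)$ lies in the integral closure of $k[V]^{(p)}[f]$ in its fraction field, so it remains to show $k[V]^{(p)}[f]$ is integrally closed. (If the form of Skryabin's result one uses is already at the level of coordinate rings, this descent is unnecessary.)

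The normality of $k[V]^{(p)}[f]$, together with the irreducibility of $f$ on which it rests, is the part I expect to demand the most care. As a hypersurface, $k[V]^{(p)}[f]$ is Cohen--Macaulay, so by Serre's criterion it is normal once its singular locus has codimension at least $2$. Write $c$ for the Frobenius twist of $f$, so $c(x_1^p,\dots,x_n^p) = f^p$; Euler's identity for the homogeneous $c$, together with $\car k \nmid \deg f$, shows $c$ vanishes on its own critical locus, so the singular locus of $\{T^p = c\}$ is (the critical locus of $c$)$\,\times \{T = 0\}$, which has the same dimension as the critical locus of $f$ (its image under Frobenius). Euler's identity also places the critical locus of $f$ inside $\{f=0\}$, so it has dimension at most $\dim V - 2$ provided $\{f=0\}$ is reduced --- and this holds because $f$ is irreducible up to a scalar. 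To see the latter, I would note that each irreducible factor $g$ of $f$ (necessarily homogeneous) cuts out an $H$-stable prime divisor, because a connected group fixes each irreducible component of $\{f=0\}$; hence $h\cdot g$ is a scalar multiple of $g$ for each $h \in H$, so $g$ is $H$-invariant since $H$, being semisimple, has no nontrivial characters, so $g \in k[V]^H = k[f]$; but a nonconstant homogeneous element of $k[f]$ dividing $f$ must be a scalar multiple of $f$. This forces $f$ irreducible, $\{f=0\}$ integral of dimension $\dim V - 1$, the codimension bound, the normality of $k[V]^{(p)}[f]$, and finally, in case (1), that the ring of Lie invariants is exactly $k[V]^{(p)}[f]$.
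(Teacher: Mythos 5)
Your proof is essentially the same as the paper's: both hinge on Skryabin's theorem for infinitesimal group schemes, both use the Euler-type identity $f_1(v,v) = (\deg f)\,f(v)$ (the paper's Example \ref{lie.scalar}) together with $p \nmid \deg f$ to place the critical locus of $f$ inside $\{f=0\}$, and both deduce irreducibility of $f$ from connectedness of $H$ and absence of nontrivial characters to conclude that this critical locus has codimension at least $2$ in $V$. The one respectable difference is organizational: the paper applies Skryabin's Theorem 5.5 directly at the level of the polynomial ring (the codimension-$2$ condition is the hypothesis that theorem needs, and its conclusion is precisely $k[V]^{(p)}[f]$), whereas you posit only a fraction-field version of Skryabin and then descend to $k[V]^{(p)}[f]$ by verifying normality via the hypersurface/Cohen--Macaulay/Serre's criterion route. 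That extra normality step is correct and does the job, but it is work that the form of Skryabin's result used in the paper already does for you — as your own parenthetical anticipates.
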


Recall that case \eqref{lie.0} of the lemma encompasses all representations with $\dim k[V]^H$ equal to 0, and that every representation with $\dim k[V]^H = 1$ has $k[V]^H = k[f]$ for some homogeneous $f$, see \cite[Prop.~12]{Popov:14} for $k = \C$ and \cite[6.1]{BGL} for the general case

\begin{proof}
For \eqref{lie.0}, this is a straightforward application of \cite[Th.~5.5]{Skryabin}.  For \eqref{lie.1}, Example \ref{lie.scalar} shows that, for each $v \in V$, $f_1(v,v) = 0$ if and only if $f(v) = 0$.  Therefore, the variety $Z$ consisting of those $v \in V$ such that the linear form $f_1(v,-)$ vanishes is contained in the vanishing set $Y$ of $f$, and in fact is the singular set of $Y$. Since $f$ is irreducible (because $H$ is connected and has only trivial characters), $Z$ is a proper subvariety, 
so $Z$ has codimension at least 2 in $V$ and Skryabin's result gives the claim.
\end{proof}
\section{The compact real form of $E_8$} \label{E8.sec}

More than 125 years ago, Wilhelm Killing classified the finite-dimensional simple complex Lie algebras by introducing the notion of root system and then classifying the simple root systems.\footnote{Apart from some small errors, corrected in \cite{Ca:th}.}  In the paper containing the classification, \cite{Killing2}, he explicitly posed the opposite problem of giving, for each simple root system, a concrete description of a simple Lie algebra with that root system (ibid., p.~38).  He answered this problem for the root systems of types $A_n$, $B_n$, $C_n$, and $D_n$ by showing that they come from $\sl_{n+1}$, $\so_{2n+1}$, $\mathfrak{sp}_{2n}$, and $\so_{2n}$ respectively, and these descriptions are now a standard part of the theory as in \cite[\S{VIII.13}]{Bou:Lie} or \cite[7.4.7]{Sp:LAG}.  Analogous descriptions for types $E_6$ and $E_7$ date back 120 years to Cartan's thesis \cite[pp.~139--147]{Ca:th} and were followed by treatments of $G_2$ by Engel \cite{Engel} and Cartan (without proof, \cite[p.~298]{Ca:real}) and $F_4$ by Chevalley--Schafer \cite{ChevS} and a refinement of the $E_7$ case by Freudenthal \cite{Frd:E7}.  For $E_8$, the only such interpretation known is as the automorphism group of its Lie algebra\footnote{See \S\ref{arbfield.sec} for a more comprehensive discussion.}, and we now give another one.
Recall that the smallest  faithful  irreducible representation of a group of type $E_8$ is its adjoint representation of dimension 248.

The paper \cite{CederwallPalmkvist} gives an explicit formula for a degree 8 invariant polynomial $f$ on the Lie algebra of the compact real form of $E_8$ that is not in the span of the 4th power of the Killing form $q$, obtained by decomposing the representation with respect to the maximal subgroup of type $D_8$.  Alternatively, such an invariant may be found by picking any degree 8 polynomial  $f_0$ and defining $f$ to be the average with respect to Haar measure, $f(v) := \int_G f(gv)\, dg$; the resulting $f$ will be $E_8$-invariant and almost certainly not in $\R q^4$.

\begin{thm} \label{phys}
The stabilizer in $\GL_{248}(\R)$ of the octic polynomial $f$ displayed in \cite[(2.3)]{CederwallPalmkvist} is generated by the compact form of $E_8$ and $\pm 1$.  The stabilizer of $f$ in $\GL_{248}(\C)$ is generated by the complex $E_8$ and the eighth roots of unity.
\end{thm}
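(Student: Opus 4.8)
The plan is to first pin down the identity component $O(f)^{\circ}$ of the complex stabilizer, then compute the (finite) component group, and finally descend the result to $\R$. Realize $V = \C^{248}$ with $E_8$ acting by its adjoint representation $L(\la_8)$, and write $q$ for the invariant quadratic (Killing) form. Since $f$ has real coefficients and is invariant under the compact real form, which is Zariski dense in $G := E_8(\C)$, the polynomial $f$ is $G$-invariant; it is also invariant under the group $\mu_8$ of scalar matrices $\zeta I$ with $\zeta^8 = 1$, because it is homogeneous of degree $8$. Let $O \le \GL_{248}(\C)$ be the stabilizer of $f$ (over $\C$ the naive and scheme-theoretic stabilizers agree, $\car \C = 0$), and set $H := O^{\circ} \supseteq G$. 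Then $H$ acts irreducibly on $V$; a faithful irreducible module has no nonzero vectors fixed by the unipotent radical (which would be an $H$-submodule), so $H$ is reductive, and its connected center — a central torus acting by scalars — must be trivial, since the only scalars fixing $f$ form the finite group $\mu_8$. Hence $H$ is semisimple, and in particular $H \le \SL(V)$, a semisimple group having no nontrivial characters.

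Next I would invoke the classification of closed connected subgroups lying between $E_8$ in its $248$-dimensional adjoint representation and $\SL_{248}$: by Dynkin's classification of maximal subgroups — equivalently, using that $E_8$ is maximal in $\SO(q) = \SO_{248}$ together with the absence of $(E_8, L(\la_8))$ from the list of Theorem \ref{distinctinvariants} — any such subgroup is $E_8$, $\SO_{248}$, or $\SL_{248}$. The two larger options are excluded by invariant theory: $\C[V]^{\SL_{248}} = \C$, so $H = \SL_{248}$ would make $f$ constant, while $\C[V]^{\SO_{248}} = \C[q]$, so $H = \SO_{248}$ would force $f \in \C q^4$, contradicting the chosen property of the Cederwall--Palmkvist octic (a real $f$ lying in $\C q^4$ being necessarily a real multiple of $q^4$). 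Hence $H = E_8$. Because $E_8$ is centerless, has no outer automorphisms, and acts irreducibly, every element of $\GL(V)$ normalizing $E_8$ induces an inner automorphism and, after correcting by an element of $E_8$, becomes a scalar by Schur's lemma; thus $N_{\GL(V)}(E_8) = E_8 \cdot \C^{\times}$, and an element $z h$ with $z \in \C^{\times}$, $h \in E_8$ fixes $f$ exactly when $z^8 = 1$. Since $O \le N_{\GL(V)}(O^{\circ}) = N_{\GL(V)}(E_8)$, this yields $O = E_8 \cdot \mu_8$, an internal direct product (trivial intersection as $E_8$ is centerless), which is the complex assertion.

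For the real statement, $f$ is fixed by $\Ecpt$ and by $-I$ (since $(-1)^8 = 1$), and in characteristic $0$ the scheme-theoretic stabilizer $\mathbf{O}$ is smooth with $\mathbf{O}$ reductive, so $\mathbf{O}(\R)$ is exactly the naive stabilizer in $\GL_{248}(\R)$. Base change to $\C$ identifies $\mathbf{O} \times_{\R} \C$ with the complex stabilizer just computed, so $\mathbf{O}^{\circ}$ is a real form of $E_8$; containing the connected group $\Ecpt$ of the same dimension, it must equal $\Ecpt$. Rerunning the normalizer computation over $\R$ — now using that $\e_8$ is absolutely irreducible, so its real endomorphism algebra is $\R$ and its real centralizer in $\GL_{248}(\R)$ is the scalar group $\R^{\times}$, and again that $\Out E_8 = 1$ — gives $\mathbf{O}(\R) = \Ecpt \cdot \{ z \in \R^{\times} : z^8 = 1\} = \Ecpt \cdot \{\pm I\}$, as claimed. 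I expect the only genuinely substantive step to be the one forcing $H \ne \SO_{248}$: this is where the specific choice of $f$ (that it is not proportional to $q^4$) enters, and it relies on having the overgroup classification — ultimately Theorem \ref{distinctinvariants} — in hand; the remaining steps are formal.
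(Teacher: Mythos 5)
Your proposal is correct and follows essentially the same route as the paper: reduce to $\C$, show the identity component of the stabilizer is semisimple (the paper gets this directly from reductivity plus $f$ being nonconstant, whereas you route through the connected-center argument — same content), then classify proper connected overgroups of $E_8$ inside $\SL_{248}$ as $\SO_{248}$ or $\SL_{248}$, exclude both by comparing invariant octics, and finish with the normalizer argument using $\Out(E_8)=1$. The one small slip is your appeal to Theorem \ref{distinctinvariants} for the overgroup classification: that theorem concerns equality of rings of invariants, not a classification of intermediate subgroups, so it is not the right handle; the clean reference inside the paper is Lemma \ref{seitz4e8} (or Seitz's theorem), and the paper's own proof instead gives a short direct argument — $S$ simple of rank $> 8$ with smallest faithful irrep of dimension exactly $248$ and not symplectic forces $S \in \{\SL_{248}, \SO_{248}\}$. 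Your more detailed treatment of the descent to $\R$ is fine but not needed, as the paper notes at the outset that proving the complex statement suffices.
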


\begin{proof} 
As the compact real $E_8$ and the eighth roots of unity stabilize $f$ by construction, it suffices to verify that nothing else stabilizes $f$, for which it suffices to consider the complex case.
Put $S$ for the identity component of the stabilizer of $f$ in $\GL_{248}$.  Because the representation is irreducible, it follows that $S$ is reductive, hence semisimple because $f$ is not constant, hence simple because the representation is tensor indecomposable for $E_8$.  If $S$ properly contains $E_8$, then it has classical type and its smallest nontrivial representation has dimension 248 and is not symplectic, i.e., $S$ is $\SL_{248}$ or $\SO_{248}$.   But $\SL_{248}$ does not stabilize any nonzero octic form and the only octic forms left invariant by $\SO_{248}$ are scalar multiples of the fourth power of  a quadratic form (the Killing quadratic form for $E_8$), so we conclude that $S = E_8$.  As the full stabilizer of $f$ normalizes $S$ and $E_8$ has no outer automorphisms, the full stabilizer is contained in the group generated by $E_8$ and the scalar matrices; the claim follows.
\end{proof}

The compact real form of $E_8$ discussed in the theorem is the one playing a role in the recent laboratory experiment described in \cite{Coldea}, cf.~\cite{BoG}.

We will generalize Theorem \ref{phys} to other fields in Theorem \ref{e8} and to other groups in Theorem \ref{canonical}.  Nonetheless, we have included this doubly special result here for two reasons.  First, it is an example where the polynomial function is known explicitly.  Second, despite that it is a very special case of our results below, it already sheds new light on the problem posed by Killing more than 125 years ago.

\section{Containment of Lie algebras} \label{lie.sec}

We will prove the following result, which will be used to prove that certain group schemes are smooth, see Theorem \ref{e8} and Theorem \ref{for:cubic0}.

\begin{prop} \label{containment} 
Let $G \le \SL(V)$ be a simple algebraic group over an algebraically closed field $k$ of prime characteristic such that $V$ is irreducible and tensor indecomposable.  
Suppose that $\Lie(G) < \fL \le \sl(V)$ are containments of restricted Lie algebras such that $\fL$ is invariant under $G$, and $\fL$ is minimal with this property.  If $\car k \ne 2, 3$, $\Lie(G)$ is simple, and the centralizer of $\Lie(G)$ in $\fL$ is $0$, then there exists a simple simply connected algebraic group $H$ and a homomorphism $\phi \!: H \to \SL(V)$  so that $G < \phi(H)$ and $\fL = d\phi(\Lie(H))$.
\end{prop}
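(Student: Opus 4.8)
The plan is to show that $\fL$ is a classical simple Lie algebra, and then to produce $H$ and $\phi$ from the standard correspondence between classical simple Lie algebras and simple algebraic groups. Write $\g = \Lie(G) \le \sl(V)$ and $p = \car k$. Two remarks will be used throughout: since $\g \ne 0$ and $V$ is irreducible and tensor indecomposable, the highest weight of $V$ is restricted, so $V$ is irreducible already as a $\g$-module and hence as an $\fL$-module; and $Z(\fL) \subseteq C_\fL(\g) = 0$, so $\operatorname{ad}\colon \fL \to \operatorname{Der}(\fL)$ is injective. The argument then has three parts: (i) $\fL$ is semisimple, (ii) $\fL$ is in fact classical simple, (iii) manufacture $H$ and $\phi$ and verify $G < \phi(H)$.

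For (i), let $\mathfrak{r}$ be the solvable radical of $\fL$; being characteristic it is $G$-invariant, and if $\mathfrak{r} \ne 0$ I would take $\mathfrak{a}$ to be a minimal nonzero $G$-invariant subspace of the last nonzero term of the derived series of $\mathfrak{r}$, so that $\mathfrak{a}$ is an abelian ideal of $\fL$. For $x \in \mathfrak{a}$ the operator $\operatorname{ad}(x)$ has square zero on $\fL$, so (as $p \ge 3$) $\operatorname{ad}(x^{[p]}) = \operatorname{ad}(x)^p = 0$ and hence $x^{[p]} \in Z(\fL) = 0$; thus $\mathfrak{a}$ is a restricted subalgebra, and $\g + \mathfrak{a}$ is a $G$-invariant restricted subalgebra strictly containing $\g$ and contained in $\fL$. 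Minimality then forces $\fL = \g + \mathfrak{a}$. But the elements of $\mathfrak{a}$ are pairwise commuting nilpotent operators on $V$, so their common kernel $W = \{\, v \in V : \mathfrak{a}v = 0\,\}$ is nonzero; for $g \in G$, $v \in W$, $x \in \mathfrak{a}$ one has $x(gv) = g\bigl((g^{-1}xg)v\bigr) \in g(\mathfrak{a}v) = 0$ because $G$ normalizes $\mathfrak{a}$, so $W$ is $G$-stable, hence $W = V$ and $\mathfrak{a} = 0$, a contradiction. Therefore $\mathfrak{r} = 0$.

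For (ii), I would invoke the structure theory of semisimple restricted Lie algebras: the socle of $\fL$ is a direct sum of minimal ideals of the form $S_i \otimes \cO(m_i)$ with $S_i$ simple, and $\fL$ acts faithfully on it by derivations. The socle is characteristic, hence $G$-invariant; $G$ connected fixes each minimal ideal, and since $C_\fL(\g) = 0$ the simple algebra $\g$ acts nontrivially, hence faithfully, on each of them. A sequence of applications of minimality --- each forming the sum of $\g$ with a suitable proper $G$-invariant restricted subalgebra and noting it can be neither $\g$ nor $\fL$ --- should collapse the socle to a single block, then collapse that block (forcing $\cO(m) = k$), and finally give $\fL = S$ simple. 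Since $p \ne 2,3$, the classification of simple Lie algebras over $k$ leaves classical, Cartan, or (for $p = 5$) Melikian type. If $\fL$ were of Cartan or Melikian type it would carry its intrinsic standard filtration $\fL = \fL_{(-q)} \supsetneq \cdots \supsetneq \fL_{(0)} \supsetneq \fL_{(1)} \supsetneq \cdots$, necessarily $G$-invariant; since $\fL_{(0)}$ is a maximal subalgebra of small codimension that is never simple, $\g$ normalizes it, so minimality excludes $\g \subseteq \fL_{(0)}$, while $\g + \fL_{(0)} = \fL$ would force $\g$ to act faithfully on the small-dimensional $\fL/\fL_{(0)}$ and $\g \cap \fL_{(0)}$ to have small codimension in $\g$, which one checks is incompatible with the graded structure of $\fL_{(0)}$ and with $\dim \fL$. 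Hence $\fL$ is classical simple. I expect part (ii), and within it the exclusion of Cartan and Melikian type, to be the main obstacle: this is where $p \ne 2,3$ and the fine structure of modular Lie algebras are genuinely needed, and some care is required to extract the contradiction from minimality.

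For (iii), let $H$ be the simply connected simple algebraic group whose root system is that of $\fL$; there is a canonical surjective restricted homomorphism $\Lie(H) \twoheadrightarrow \fL$, and pulling back the $\fL$-module $V$ makes it an irreducible restricted $\Lie(H)$-module, which (for $H$ simply connected) is the restriction of an irreducible representation $\phi\colon H \to \SL(V)$ with $d\phi$ equal to $\Lie(H) \twoheadrightarrow \fL \hookrightarrow \sl(V)$; in particular $d\phi(\Lie H) = \fL$. Finally, both $G$ and $\phi(H)$ normalize $\fL$, and the induced maps into $\operatorname{Aut}(\fL)$ have images with the same identity component --- the adjoint group of $\fL$, since the composite $H \to \phi(H) \to \operatorname{Aut}(\fL)$ is the adjoint action of $H$. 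As $G$ is connected, $\operatorname{Ad}_\fL(G) \subseteq \operatorname{Ad}_\fL(\phi(H))$, so each $g \in G$ differs from some element of $\phi(H)$ by an element of $\SL(V)$ centralizing $\fL$, i.e.\ by a scalar matrix; the group scheme of such scalars is finite of multiplicative type, so since $G$ is smooth and connected it lies in $\phi(H)$. The inclusion is proper because $\dim G = \dim \g < \dim \fL \le \dim \phi(H)$.
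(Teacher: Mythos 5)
Your step (i) is a correct argument that $\fL$ is semisimple: you pass to a minimal nonzero $G$-invariant subspace $\mathfrak{a}$ of the last term of the derived series of the radical, note that $\g \cap \mathfrak{a} = 0$ because $\g$ is simple and nonabelian, verify that $\g + \mathfrak{a}$ is a proper $G$-invariant restricted subalgebra strictly containing $\g$, and then use the $G$-stable common kernel of the commuting nilpotents in $\mathfrak{a}$ together with irreducibility of $V$. That is fine as far as it goes. But note it is a strictly weaker conclusion than what the paper extracts at the corresponding point: the paper kills the subalgebra $I$ generated by the sandwich elements $\{c \in \fL : [c,[c,\fL]] = 0\}$, giving strong non-degeneration, not merely vanishing of the radical.

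The genuine gap is step (ii), and you flag it yourself as ``the main obstacle.'' Having only semisimplicity in hand, you propose to (a) invoke Block's description of the socle of a semisimple restricted Lie algebra as $\bigoplus S_i \otimes \cO(m_i)$, (b) collapse this by ``a sequence of applications of minimality'' that you do not carry out, (c) invoke the Block--Wilson--Strade--Premet classification of simple modular Lie algebras, and (d) exclude Cartan and Melikian types via the standard filtration, in an argument you also do not carry out. Points (b) and (d) are where a proof would have to live, and neither is supplied; in particular (d) is not a routine check --- controlling how a smooth connected $G$ with simple $\Lie(G)$ could act irreducibly through a Cartan-type or Melikian-type $\fL$ is exactly the sort of quantitative analysis the paper is at pains to avoid. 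The difference with the paper's route is not cosmetic: after showing $I = 0$, the paper uses Rudakov's theorem (every derivation of the simple $\g$ is inner) to deduce $[\fL,\fL] = \fL$, and then applies Premet's theorem that a perfect, strongly non-degenerate restricted Lie algebra over an algebraically closed field of characteristic $\ge 5$ is a direct sum of classical simple ideals. Tensor indecomposability of $V$ collapses the sum. This sidesteps the classification of simple modular Lie algebras entirely, and with it your problem (d). Your weaker step (i) cannot feed into Premet's theorem, so the detour through the classification is forced --- and then left undone.

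Your step (iii) matches the paper in outline: the paper quotes Steinberg to produce $H$ and $\phi$ with $\fL$ generated by the images of the root subalgebras. Your verification that $G < \phi(H)$ via the adjoint action on $\fL$ and the scalar ambiguity is a reasonable way to fill in a point the paper leaves implicit, though for, e.g., type $A_{n-1}$ in characteristic dividing $n$ one should be a little careful that the identity component of $\operatorname{Aut}(\fL)$ is indeed the image of $H$ under $\operatorname{Ad}$.
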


\begin{proof}
Write $\g$ for  $\Lie(G)$.
Let $I$ be the subalgebra of $\fL$ generated by those $c \in \fL$ such that $[c[c\fL]] = 0$, and suppose $I \ne 0$.  It is $G$-invariant because $G$ acts by algebra automorphisms on $\fL$, hence $[\g, I] \le I$, and by minimality $\fL = I + \g$.  Since $I$ is nilpotent, by minimality it is abelian.  Moreover, it acts completely reducibly on $V$ because the socle is $G$-invariant, thus $I$ is a toral subalgebra.  As $\g$ does not centralize $I$, $G$ acts nontrivially on $I$, an impossibility, so $I = 0$.

Suppose now that $[\fL \fL]$ is smaller than $\fL$.  As $\g$ is perfect, minimality of $\fL$ implies that $[\fL\fL] = \g$.  But $\fL$ normalizes $[\fL\fL]$, and every derivation of $\g$ is inner \cite{Rudakov}, so $\fL/\g$ is naturally identified with the centralizer of $\g$ in $\fL$, i.e., 0.  This contradicts the hypothesis that $\fL \ne \g$, so $[\fL\fL] = \fL$.

Now \cite[Th.~3]{Premet:strong} (for $\car k > 5$) and \cite{Premet:strong5} (for $\car k = 5$) give that $\fL$ is a sum of simple ideals ``of classical type".  Since $V$ is tensor indecomposable for $G$ and $V$ is restricted, it is also tensor indecomposable for $\g$.  It follows that $\fL$ is itself simple of classical type, cf.~\cite[Lemma 3.1]{BlockZ}.  Steinberg \cite{St:rep} gives a simple simply connected group $H$ and a homomorphism $\phi \!: H \to \SL(V)$ such that $\fL$ is the subalgebra of $\sl(V)$ generated by the images of the root subalgebras of $\Lie(H)$ under $d\phi$.
\end{proof}

\section{Adjoint representation of $E_8$} \label{E8.adj.sec}
The proof of Theorem \ref{phys} essentially relied on the nonexistence of overgroups of $E_8(\C)$ in $\SL_{248}(\C)$.  
This can be generalized as follows, which exploits the observation that overgroups of simple groups in irreducible representations are comparatively rare.  In the statement, $k$ has characteristic $p \ge 0$; in case $p = 0$, we set $k[V]^{(p)} = k$.

\begin{lem} \label{slem}
Let $X < \SL(V)$ be a simple algebraic group over an algebraically closed field $k$ such that $V$ is irreducible, restricted, and tensor indecomposable\footnote{If $\car k = 0$, then irreducible implies restricted and tensor indecomposable.  If $\car k \ne 2, 3$, then irreducible and restricted implies tensor indecomposable \cite[1.6]{seitzmem}.} for $X$.  Put $q$ for a nonzero $X$-invariant quadratic form on $V$ if one exists; otherwise set $q := 0$.
If $(X, V)$ does not appear in Table 1 of \cite{seitzmem}, then for every $f \in k[V]^X \setminus k[q]$,
the stabilizer of $f$ in $\GL(V)$ has identity component $X$.
If additionally $\car k \ne 2, 3$ and does not divide $\deg f$,  $f$ is not in $k[V]^{(p)}[q]$, and furthermore $\car k$ does not divide $n+1$ if $X$ has type $A_n$, then the scheme-theoretic stabilizer of $f$ in $\GL(V)$ is smooth with identity component $X$.
\end{lem}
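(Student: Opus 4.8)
The plan is to deduce everything from the claim that the scheme-theoretic stabilizer $S$ of $f$ in $\GL(V)$ satisfies $\Lie(S) = \Lie(X)$. Granting this, the first assertion of the lemma gives $(S_{\mathrm{red}})^\circ = X$, hence $\dim S = \dim X = \dim\Lie(X)$; since $\Lie(X) \le \Lie(S)$ always, the equality $\Lie(S) = \Lie(X)$ forces $\dim\Lie(S) = \dim S$, so $S$ is smooth, whence $S = S_{\mathrm{red}}$ and its identity component is $X$. In characteristic $0$ the group scheme $S$ is automatically smooth, so I assume $\car k = p > 0$. Write $\g := \Lie(X)$ and $\fL := \Lie(S)$, and recall that $\fL$ is exactly the set of $Y \in \gl(V)$ under which $f$ is Lie invariant; the hypotheses $\car k \ne 2,3$ and $\car k \nmid n+1$ (when $X$ has type $A_n$) are what guarantee that $\g$ is simple. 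Two preliminary remarks: since $\car k \nmid \deg f$, Example \ref{lie.scalar} gives $\Id \notin \fL$; and since $V$ is restricted irreducible for $X$ it is irreducible for $\g$, so $C_{\gl(V)}(\g) = k\,\Id$ by Schur's lemma, whence $C_\fL(\g) = \fL \cap k\,\Id = 0$.

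Suppose for contradiction that $\fL \ne \g$, and choose a $G$-invariant restricted Lie subalgebra $\fL_0$ minimal with $\g < \fL_0 \le \fL$. First I would check $\fL_0 \le \sl(V)$: otherwise $\fL_0 = (\fL_0 \cap \sl(V)) \oplus kY$ with $\operatorname{tr} Y \ne 0$, minimality forces $\fL_0 \cap \sl(V) = \g$, so $\fL_0/\g$ is a one-dimensional, hence trivial, $\g$-module; then $\operatorname{ad}(Y)|_\g$ is a derivation of $\g$, hence inner since $\g$ is simple and $\car k \ne 2,3$ (by \cite{Rudakov}), say $\operatorname{ad}(Y)|_\g = \operatorname{ad}(z)|_\g$ with $z \in \g$, and then $Y - z \in C_\fL(\g) = 0$, contradicting $Y \notin \g$. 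Now Proposition \ref{containment} applies to $\fL_0$ — its hypotheses ($V$ irreducible and tensor indecomposable, $\car k \ne 2,3$, $\g$ simple, $C_{\fL_0}(\g) = 0$) all hold — and yields a simple simply connected group $H$ with a homomorphism $\phi \colon H \to \SL(V)$ such that $X < \phi(H)$ and $\fL_0 = d\phi(\Lie(H))$.

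So $\phi(H)$ is a simple algebraic group properly containing $X$ in $\SL(V)$ and acting irreducibly on $V$. As in the proof of the first assertion, the hypothesis that $(X,V)$ does not appear in Table 1 of \cite{seitzmem} forces $\phi(H)$ to be one of $\SL(V)$, $\SO(q)$ (the case $q \ne 0$), or $\Sp(V)$ (the case $q = 0$). In each of these three cases the isogeny $H \to \phi(H)$ is separable — its kernel is trivial or $\mu_2$, which is \'etale because $\car k \ne 2$ — so $\fL_0 = \Lie(\phi(H))$; moreover $k[V]^{\phi(H)}$ is $k$ for $\SL(V)$ and $\Sp(V)$, and is $k[q]$ with $\deg q = 2$ prime to $p$ for $\SO(q)$. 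Since $\fL_0 \le \fL = \Lie(S)$, the polynomial $f$ is Lie invariant under $\fL_0 = \Lie(\phi(H))$, so Lemma \ref{lie.inv} (which rests on Skryabin's theorem) places $f$ in $k[V]^{(p)}$ or in $k[V]^{(p)}[q]$; in either case $f \in k[V]^{(p)}[q]$, contradicting the hypothesis. Hence $\fL = \g$, and the proof is complete.

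I expect the main obstacle to be the passage from the overgroup Lie algebra $\fL_0$ back to a group, i.e.\ pinning down the group $\phi(H)$ produced by Proposition \ref{containment} as one of $\{\SL(V), \SO(q), \Sp(V)\}$ using that $(X,V)$ is not in Table 1 of \cite{seitzmem}; this identification is precisely what makes Lemma \ref{lie.inv} applicable. I would emphasize that one does \emph{not} claim that $\phi(H)$ stabilizes $f$ — it generally will not, which is the characteristic-$p$ subtlety here — so the contradiction is extracted purely by comparing the ring of Lie invariants of $\Lie(\phi(H))$, as computed by Skryabin's theorem, with the hypothesis $f \notin k[V]^{(p)}[q]$.
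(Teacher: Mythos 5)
Your proof of the smoothness assertion is correct and follows essentially the same route as the paper: assume the scheme-theoretic stabilizer has Lie algebra strictly larger than $\Lie(X)$, choose a minimal $G$-invariant restricted subalgebra $\fL_0$, apply Proposition \ref{containment} to produce a simple simply connected $H$ with $X < \phi(H)$, use Seitz's theorem to pin $\phi(H)$ down to $\SL(V)$, $\SO(V)$, or $\Sp(V)$, identify $\fL_0$ with $\sl(V)$, $\so(V)$, or $\mathfrak{sp}(V)$, and derive a contradiction from Lemma \ref{lie.inv}. Two details you handle slightly differently from the paper, both sound: you justify that the minimal $\fL_0$ lies in $\sl(V)$ via an inner-derivation argument (the paper simply asserts $\Lie(S) \le \sl(V)$ with a citation to SGA3, where only $\Lie(S) \le \gl(V)$ is immediate, so your extra step is a genuine improvement in rigor), and you pin down $\ker\phi$ using simple connectedness of $\SL(V)$ and $\Sp(V)$ and the \'etaleness of $\mu_2$ when $\car k \ne 2$, whereas the paper consults L\"ubeck's tables to show $\phi$ is the standard representation.

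The one real omission is that you do not prove the first assertion of the lemma --- that the naive stabilizer of $f$ has identity component $X$, under the weaker hypotheses of the first sentence --- but instead invoke it to get $(\Sred)^\circ = X$. That assertion requires a separate argument (the paper gets it quickly from p.~278 of \cite{seitzmem}: if $\Sred \ne X$ then $\Sred$ stabilizes a quadratic or symplectic form, forcing $k[V]^{\Sred}=k[q]$, incompatible with $f \notin k[q]$). You should supply this. I would also point out that your invocation of the first assertion is not actually needed for the smoothness argument: once you know $\Lie(S) = \Lie(X)$, the chain $\dim\Lie(X) = \dim\Lie(S) \ge \dim S = \dim\Sred \ge \dim X = \dim\Lie(X)$ already forces equality throughout, so $S$ is smooth and $S^\circ = X$ without having to cite the first claim. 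The cleaner organization would be to prove the first assertion directly via Seitz (as the paper does), then give your Lie-algebra argument for smoothness without circular reference.
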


\begin{proof}
Put $S$ for the identity component of the scheme-theoretic stabilizer of $f$ in $\GL(V)$, and $\Sred$ for its reduced subgroup.  Because $\Sred$ contains $X$ and $V$ is an irreducible representation of $X$, it follows that $\Sred$  is reductive, hence semisimple because $f$ is not constant.
If $X \ne \Sred$, then as $(X,V)$ is not contained in Table 1 of \cite{seitzmem}, p.278 of ibid.\ gives that $\Sred$ is the stabilizer of a symplectic (in case $\car k = 2$) or quadratic form on $V$, hence $k[V]^{\Sred} = k[q]$, contradicting the existence of $f$.  Therefore, $X = \Sred$ and the first claim follows.

For the second claim, we have natural containments $\Lie(X) \le \Lie(S) \le \sl(V)$ by \cite[\S{VII$_{\mathrm{A}}$.6}]{SGA3:new}.  The hypothesis on the characteristic guarantees that $\Lie(X)$ is a simple Lie algebra \cite[2.7a]{Hogeweij}.  Since $V$ is a restricted irrep of $X$, it is also an irrep of $\Lie(X)$, hence the centralizer of $\Lie(X)$ in $\Lie(S)$ consists of scalar matrices, so is 0 by Example \ref{lie.scalar}. Proposition \ref{containment} provides a simple, simply connected algebraic group (scheme) $H$ and a homomorphism $\phi \!: H \to \SL(V)$ with $\Lie(S)$ containing $d\phi(\Lie(H))$.  

The image $\phi(H(k))$ of the abstract group of $k$-points of $H$ is a subgroup of $\SL(V)(k)$ containing $X(k)$, so by Seitz it is $\SL(V)(k)$, $\SO(V)(k)$, or $\Sp(V)(k)$.  The map $\phi$ is a central isogeny by construction \cite[2.15]{BoTi:C}, and combining this with \cite[(A)]{BoTi:hom} gives that $H$ is isomorphic to $\SL(V)$, $\Spin(V)$, or $\Sp(V)$, respectively.  Examining the list of dimensions of the irreps of $H$ from \cite{luebeck}, we see that $\phi$ is equivalent to $V$ or its dual, hence $\ker \phi$ is zero and the subalgebra $d\phi(\Lie(H))$ of $\Lie(S)$ is $\sl(V)$, $\so(V)$, or $\mathfrak{sp}(V)$.  But $f$ is Lie invariant under $\Lie(S)$, contradicting Lemma \ref{lie.inv}, so $S$ is smooth.  
\end{proof}

\begin{rmk} \label{homogeneous}
Suppose $f$ is non-constant homogeneous and $k$ is algebraically closed.
Many of our results show that the naive stabilizer of $f$ in $\GL(V)$ has identity component a simple group $G$, in which case the naive stabilizer of $kf$ in $\PGL(V)$ will have identity component the image of $G$.  In Lemma \ref{slem}, Theorem \ref{e8}, and Theorem \ref{for:cubic0}, we also prove that the scheme-theoretic stabilizer of $f$ is smooth, in which case its image --- the scheme-theoretic stabilizer of $kf$ in $\PGL(V)$ --- is also smooth. 
\end{rmk}

``Most'' pairs $(X, V)$ with $V$ irreducible and tensor indecomposable satisfy the hypotheses of Lemma \ref{slem} above.  Indeed, unless $(X, V)$ appears in (the rather short) Tables \ref{dim0} or \ref{dim1}, then there exists an $f \in k[V]^X \setminus k[q]$, see \S\ref{dim.sec}.  Furthermore,
inspecting the table in Seitz shows that when $\car k = 0$, a randomly selected irreducible representation $V$ of any particular simple $X$ will satisfy the hypotheses of Lemma \ref{slem} with probability 1, when one defines this probability as a limit over finite sets of weights of increasing size.

We also use Lemma \ref{slem} to give a  version of Theorem \ref{phys} for any field.  Because of the importance of this one example, we give a quick proof of Seitz's result for this case.

\begin{lem} \label{seitz4e8}   Let $k$ be an algebraically closed field of characteristic $p \ge 0$.
Let $G=E_8(k) < \SL_{248}(k)$.  
If $p \ne 2$, there is a unique proper closed subgroup $H$ with $G < H < \SL_{248}(k)$, and
it is isomorphic to $\SO_{248}(k)$.  
If $p =2$, there are  precisely $3$ proper closed subgroups $H_i, 1 \le i \le 3$ of
$\SL_{248}(k)$ properly containing $G$.  We have
$G < H_1 < H_2 < H_3 < \SL_{248}(k)$ with $H_1 \cong \SO_{248}(k)$, 
$H_2 \cong \OO_{248}(k)$ and $H_3 \cong \Sp_{248}(k)$. 
\end{lem}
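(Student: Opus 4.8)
The plan is to reduce to closed \emph{connected} subgroups and then run a Seitz-style argument based on the dimensions of irreducible representations, rather than quoting \cite{seitzmem} in full. Write $V = \Lie(E_8)$, the $248$-dimensional adjoint module; it is self-dual, irreducible, and tensor indecomposable for $G$ in every characteristic (it is $L(\la_8)$, of dimension $248$ for all $p$ by \cite{luebeck}, and $G$ has no nontrivial irrep of dimension $<248$). For a closed $H$ with $G < H < \SL(V)$, the identity component $H^\circ$ is a closed connected overgroup of $G$ and $H$ normalizes $H^\circ$; so the first task is to find the connected $K$ with $G \le K < \SL(V)$, and the second is to bound $H$ inside $N_{\SL(V)}(K)$.

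First I would check that such a $K$ is simple. Since $V$ is an irreducible $G$-module it is an irreducible $K$-module, so the fixed space of the unipotent radical of $K$ on $V$ is a nonzero $K$-submodule, hence all of $V$, and $K$ is reductive; its connected centre then acts on $V$ by scalars by Schur's lemma, that is through a character of finite image, hence trivially, so $K$ is semisimple. If $K$ were a nontrivial central product $K_1 K_2$ then $V|_K = V_1 \boxtimes V_2$ with $\dim V_i \ge 2$; restricting to $G$ either makes $V|_G$ tensor decomposable, or makes $G$ act trivially on one factor and hence embed in $\SL(V_2)$ with $\dim V_2 < 248$ --- both impossible. So $K$ is simple, and $\dim K > \dim G = 248$ since $G < K$ are connected and distinct.

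Now the core step. A simple group of dimension exceeding $248$ is classical, the exceptional groups having dimension at most $\dim E_8 = 248$; and since $G \le K$ already forces the natural module of $K$ to have dimension at least $248$, every other irreducible representation of $K$ is strictly larger by \cite{luebeck}, while the embedding $K \hookrightarrow \SL(V)$ being a closed immersion rules out Frobenius twists. Hence $V$ is the natural module of $K$, so $K \in \{\SL_{248}, \Sp_{248}, \SO_{248}\}$, and $\SL_{248}$ is excluded because $K$ is proper. Since $V$ is self-dual and irreducible, the space of $G$-invariant bilinear forms on $V$ is one-dimensional; its spanning form is symmetric and non-degenerate, and indeed $V$ carries a non-degenerate $G$-invariant quadratic form in every characteristic, so $G < \SO_{248}$. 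If $\car k \ne 2$ there is no invariant alternating form, so $G \not< \Sp_{248}$ and $\SO_{248}$ is the unique connected intermediate group; if $\car k = 2$ then $\SO_{248} < \Sp_{248}$ in the standard way (the polar form of the quadratic form is the symplectic one), so $G < \SO_{248} < \Sp_{248}$ and these are the two connected intermediate groups. Finally, passing from connected overgroups to closed subgroups: $E_8$ is both simply connected and adjoint, so $\Out(G) = 1$ and $Z(G) = 1$, and by Schur's lemma $N_{\SL(V)}(G) = G \cdot Z(\SL(V))$, so an $H$ with $H^\circ = G$ is a central enlargement of $G$; the normalizer of $\SO_{248}$ in $\SL(V)$ has identity component $\SO_{248}$ and produces, beyond $\SO_{248}$ itself, only $\OO_{248}$, and only in characteristic $2$, where $[\OO_{248}:\SO_{248}]=2$; and the normalizer of $\Sp_{248}$ produces nothing proper strictly above $\Sp_{248}$. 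This accounts for the subgroups in the asserted chains.

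The step I expect to be the main obstacle is this core step: confirming via \cite{luebeck} that no ``accidental'' simple group of dimension $>248$ carries a $248$-dimensional irreducible representation through which the inclusion $G < \SL_{248}$ could factor, together with the characteristic-$2$ bookkeeping --- the precise relations among $\SO_{2n}$, $\OO_{2n}$, $\Sp_{2n}$ and their mutual normalizers, and the existence of the $G$-invariant quadratic form on $\e_8$ in that characteristic.
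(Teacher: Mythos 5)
Your proof takes the same approach as the paper's, written out in more detail: reduce to connected overgroups, use tensor indecomposability of $\Lie(E_8)$ to force any connected overgroup $K$ to be simple, exclude exceptional types (you via $\dim K > 248$, the paper via $\mathrm{rank}\,K > 8$; both work), observe that every nontrivial irreducible representation of such a $K$ has dimension at least $248$, conclude that $K$ is $\SL_{248}$, $\SO_{248}$, or $\Sp_{248}$ acting naturally, and then let the invariant bilinear/quadratic forms on $\Lie(E_8)$ sort out the characteristic dichotomy. The Frobenius-twist aside is harmless but unnecessary: over an algebraically closed field, a Frobenius twist of the natural representation has the same image in $\SL(V)$ as the natural representation itself, so twists never yield new closed subgroups.

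The disconnected step is where your write-up (and, to be fair, the lemma as literally stated) gets imprecise. You correctly observe that $N_{\SL(V)}(G) = G\cdot Z(\SL(V))$, so any $H$ with $H^\circ = G$ has the form $G\cdot\mu_d$ for $d \mid 248$ --- and then you drop the thread. But such $H$, whenever $\mu_d(k) \ne 1$ with $d > 1$ (e.g.\ $G\cdot\mu_2$ in characteristic $0$, or $G\cdot\mu_{31}$ in characteristic $2$), \emph{are} proper closed subgroups strictly between $G$ and $\SL_{248}$; and your assertion that ``the normalizer of $\Sp_{248}$ produces nothing'' is also not right, since $N_{\SL_{248}}(\Sp_{248})$ and $N_{\SL_{248}}(\SO_{248})$ contain analogous central scalar enlargements. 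So the uniqueness and counting claims of the lemma need a gloss (restrict to connected $H$, or work modulo $Z(\SL(V))$). The paper's own proof shares this feature --- it opens with ``It suffices to consider closed connected subgroups'' and never revisits the disconnected case --- which is innocuous for the only place the lemma is used (Theorem \ref{e8}, where just the connected classification matters). Your instinct to track the disconnected case via normalizers was the right one; it just needed to be carried through rather than asserted to terminate.
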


\begin{proof}  It suffices to consider closed connected subgroups.   Since $G$ acts tensor 
indecomposably on the adjoint module, any connected overgroup $H$ of $G$ is simple.
Since the rank of $H$ is greater than $8$, $H$ must be of type  $A$, $B$, $C$, or $D$.   Moreover,
as any representation of $E_8$ has dimension at least $248$, the same must be true
of $H$.    Thus $H  \cong \SL_{248}$, $\SO_{248}$ or $\Sp_{248}$.   If $p \ne 2$,  $G$
does not preserve an alternating form.  In any case, since $G$ acts irreducibly, $G$
preserves a unique (up to scalar) quadratic form (or symplectic form if $p=2$).
The result follows.
\end{proof}  

\begin{thm} \label{e8}
Let $G$ be a simple algebraic group of type $E_8$ over a field $k$ and put $q$ for a nonzero $G$-invariant quadratic form on  $V := \Lie(G)$.  Then there exists a homogeneous polynomial $f$ of degree $8$ on $V$ that is $G$-invariant and does not belong to $kq^4$.  For each such $f$,
\begin{enumerate}
\item \label{e8.naive} the stabilizer of $f$ in $\GL(V)$ is generated by $G(k)$ and the eighth roots of unity in $k$; and
\item \label{e8.smooth} if $\car k \ne 2, 3$, the scheme-theoretic stabilizer of $f$ in $\GL(V)$ and the scheme-theoretic stabilizer of $kf$ in $\PGL(V)$ is  (the image of) $G$.
\end{enumerate}
\end{thm}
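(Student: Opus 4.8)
The plan is to reduce everything to Lemma \ref{slem} together with the rationality/descent machinery that is standard in this setting. First I would dispose of the existence claim: working over $\kalg$, the averaging (or Cederwall--Palmkvist) construction from \S\ref{E8.sec} produces an $E_8$-invariant octic not proportional to $q^4$; since $\dim k[V]^G_8$ (the degree-$8$ part) is defined over $k$ and is at least $2$-dimensional containing $q^4$, one can choose such an $f$ over $k$ itself. (Concretely: $E_8$ is split over its field of definition up to the usual inner forms, and the space of invariant octics is a $k$-subspace of $\Sym^8 V^*$ strictly larger than $kq^4$, by flat base change from $\kalg$.)

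Next, for a fixed such $f$, I would pass to $\kalg$ and apply Lemma \ref{slem} to $X = E_8$ acting on its adjoint representation $V$, which is irreducible, restricted, and tensor indecomposable in every characteristic $\ne 2,3$ (and in characteristic $2,3$ the adjoint module is still irreducible and tensor indecomposable for $E_8$, so the first part of Lemma \ref{slem} applies as long as $(E_8, \Lie(E_8))$ is not in Table 1 of \cite{seitzmem} --- which it is not, since the overgroup there is just $\SO_{248}$, recorded in Lemma \ref{seitz4e8}). Since $q$ exists and $k[V]^{E_8} \ne k[q]$ in the relevant degrees, $f \in k[V]^{E_8}\setminus k[q]$, so Lemma \ref{slem} gives that the identity component of the naive stabilizer of $f$ in $\GL(V)_{\kalg}$ is $E_8$. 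For part \eqref{e8.smooth}, when $\car k \ne 2,3$ I also need: $\car k \nmid \deg f = 8$ (automatic since $p \ne 2$), $f \notin k[V]^{(p)}[q]$, and the type-$A_n$ condition is vacuous. The condition $f \notin k[V]^{(p)}[q]$ can be arranged when choosing $f$: in degree $8$ with $p \ne 2,3$ one has $p \nmid 8$, so a generic invariant octic is not a $p$-th power times anything, and in any case $k[V]^{(p)}[q]$ meets the degree-$8$ invariants in a proper subspace (it forces either divisibility by $q$ or being a $p$-th power); pick $f$ outside it. Then Lemma \ref{slem} yields that the scheme-theoretic stabilizer is smooth with identity component $E_8$.

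Now I would identify the \emph{full} stabilizer, not just its identity component. The full naive stabilizer $O(f)(\kalg)$ normalizes its identity component $E_8$, and since $E_8$ has trivial outer automorphism group and is its own normalizer in $\GL(V)$ modulo scalars (it acts irreducibly, so its centralizer in $\GL(V)$ is the scalars $\mathbb{G}_m$), we get $O(f)(\kalg) \subseteq E_8(\kalg)\cdot \kalg^\times$. An element $\lambda \cdot \Id$ stabilizes $f$ iff $\lambda^8 = 1$, by homogeneity of degree $8$. Hence $O(f)(\kalg) = E_8(\kalg)\cdot\mu_8(\kalg)$, and combined with the scheme-theoretic smoothness (when $p \ne 2,3$) this shows the scheme-theoretic stabilizer of $f$ is $G\cdot\mu_8$ and the stabilizer of the line $kf$ in $\PGL(V)$ is the image of $G$ (using Remark \ref{homogeneous}). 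Finally I would descend from $\kalg$ to $k$: the naive stabilizer over $k$ consists of the $\mathrm{Gal}(\kalg/k)$-fixed points, and since $f$ is defined over $k$ the subgroup $G\cdot\mu_8$ is defined over $k$, giving statement \eqref{e8.naive} that the $k$-stabilizer is generated by $G(k)$ and the eighth roots of unity \emph{in} $k$; statement \eqref{e8.smooth} is a statement about group schemes so descends directly once established over $\kalg$.

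The main obstacle I anticipate is not the group-theoretic core --- that is handed to us by Lemma \ref{slem} and Lemma \ref{seitz4e8} --- but the bookkeeping around the choice of $f$ over the ground field $k$ and the two side conditions needed for smoothness ($f \notin k[V]^{(p)}[q]$ in particular). One must check that the averaging construction genuinely lands outside $k[V]^{(p)}[q]$, or argue by a dimension count in $\Sym^8 V^*$ that the ``bad'' set is a proper subvariety so a suitable $f$ exists over any infinite $k$ (and over finite $k$ one may need to extend, average, and descend, or invoke that the generic invariant octic works and $k$-points are dense). This is routine but is the only place where a little care is required.
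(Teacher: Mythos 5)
Your proposal follows the same route as the paper: reduce to Lemma~\ref{slem} (or Lemma~\ref{seitz4e8}) for the group-theoretic core, a base-change argument for existence of $f$, and a descent argument for part~\eqref{e8.naive}. Two points deserve sharpening, however.

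First, you treat $f \notin k[V]^{(p)}[q]$ as a side condition to be \emph{arranged} when picking $f$, and flag it as the main place where care is needed. But the theorem asserts claim~\eqref{e8.smooth} for \emph{every} degree-$8$ invariant $f \notin kq^4$, so there is no freedom to choose. The point you are missing is that the condition is automatic: when $\car k = p \ge 5$, every homogeneous element of $k[V]^{(p)}$ has degree divisible by $p$, so the degree-$8$ homogeneous part of $k[V]^{(p)}[q]$ is spanned by $g \cdot q^r$ with $g \in k[V]^{(p)}$ homogeneous of degree $8-2r \in \{0,2,4,6,8\}$; since none of $2,4,6,8$ is a multiple of $p$, the only possibility is $g$ constant and $r=4$, i.e.\ the degree-$8$ part of $k[V]^{(p)}[q]$ is exactly $kq^4$. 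Hence $f \notin kq^4$ already implies $f \notin k[V]^{(p)}[q]$, and the ``for each such $f$'' formulation is correct with no extra hypothesis. (The paper simply calls this ``a direct application of Lemma~\ref{slem}''.)

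Second, your existence argument over $\kalg$ in positive characteristic is a little thin. The averaging-over-Haar-measure and Cederwall--Palmkvist constructions live over $\R$ or $\C$ and say nothing directly about $\car k = p > 0$. What one actually needs, and what the paper uses, is a specialization/semicontinuity argument from $\Z$: the $\Z$-module of octic $\E_8$-invariants on $\Lie(\E_8)$ has rank $2$ because the complex invariant space is $2$-dimensional, and the dimension of the invariant space can only go up upon reduction modulo $p$. That guarantees an $f \notin kq^4$ over $\kalg$ in every characteristic, after which the isomorphism $k[V]^G \ot \kalg \cong \kalg[V\ot\kalg]^{G\ot\kalg}$ (Seshadri) produces such an $f$ already over $k$. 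Your ``flat base change from $\kalg$'' gesture is in the right spirit for the descent to $k$, but it does not by itself supply the positive-characteristic input over $\kalg$.

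With these two adjustments your argument matches the paper's proof, including the normalizer argument (trivial outer automorphism group, centralizer of an irreducible is scalars, scalars preserving an octic are eighth roots of unity) for the full naive stabilizer and the Galois-descent step using $Z(E_8)=1$ to get $G(k)\cdot\mu_8(k)$.
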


\begin{proof} 
Suppose first that $k$ is algebraically closed.  Put $\E_8$ for a split group scheme of type $E_8$ over $\Z$ (and identify $\E_8(k)$ with $G$) and $\mathfrak{q}$ for an indivisible $\E_8$-invariant quadratic form on $\Lie(\E_8)$.  As the space of octic $\E_8(\C)$-invariant polynomials on $\Lie(\E_8)\ot \C$ is 2-dimensional, the rank of the corresponding module over $\Z$ is 2.  This dimension can only increase when we reduce modulo the characteristic of $k$, so there is an octic $\E_8$-invariant polynomial $f$ on $\Lie(\E_8) \ot k$ that is not a multiple of $(\mathfrak{q} \ot k)^4$, equivalently, is not a multiple of $q^4$. 
The stabilizer of $f$ in $\GL(V)$ is generated by $\E_8(k)$ and the eighth roots of unity by Lemma \ref{slem} or Lemma \ref{seitz4e8}.  Claim \eqref{e8.smooth} is a direct application of Lemma \ref{slem} and Remark \ref{homogeneous}.

Now let $k$ be arbitrary.  The natural homomorphism $k[V]^G \ot \kalg \to \kalg[V \ot \kalg]^{G \times \kalg}$ is an isomorphism \cite[Lemma 2]{Sesh:GR}, so there exists an $f \in k[V]^G \setminus k[q]$.  As $G(k) = G(\kalg) \cap \GL(V)$, claim \eqref{e8.naive} follows.  Claim \eqref{e8.smooth} is obvious because it can be verified after base change to $\kalg$.
\end{proof}

We conjecture that the scheme-theoretic stabilizer of $kf$ in $\PGL(V)$ is smooth for all $k$, and that the scheme-theoretic stabilizer $S$ of $f$ is also smooth when $\car k = 3$.
However, if $\car k = 2$, then $S$ is not smooth, because $\Lie(S)$ contains both $\Lie(G)$ and the scalar matrices (because they are the Lie algebra of the group scheme of eighth roots of unity), so $\dim \Lie(S) > \dim \Lie(G) = \dim G = \dim S$.

\begin{rmk}
The method of proof used in this section can be applied more generally to argue for example that $G$ is the identity component of the stabilizer of a subset of $V^{\otimes r} \otimes (V^*)^{\otimes s}$ for some $r$, $s$.  As a concrete illustration, consider $V$ the minuscule 56-dimensional representation of a group $G$ of type $E_7$ over an algebraically closed field $k$ of characteristic 2.  Then $G$ stabilizes a nonzero quadratic form $q$, and $k[V]^G = k[q]$ (see Prop.~\ref{few}), so $G$ is not the identity component of the stabilizer of a homogenous form.  But there is a non-symmetric 4-linear form $\Psi$ on $V$ whose stabilizer has identity component $G$, see \cite[\S6]{Lurie} and \cite{Luzgarev}.  This claim could be proved following the methods of this section by checking that $\SO(q)$ does not stabilize $\Psi$.

(The case where $k$ has characteristic different from 2 is easier.  Then $k[V]^G = k[f]$ for a quartic form $f$ and Lemma \ref{slem} says that the stabilizer of $f$ has identity component $G$.  So in any characteristic  $G$ is the identity component of the stabilizer of a degree 4 element in the tensor algebra on $V$.)
\end{rmk}

\section{Adjoint groups are stabilizers of canonical homogeneous forms} \label{canonical.sec}

In this section, we show that each split adjoint group, roughly speaking, can be realized as the identity component of the stabilizer of a \emph{canonical} homogeneous form on its Lie algebra.

To see this, fix a simple root system $\Rc$ and put $A$ for the ring obtained by adjoining to $\Z$ the inverses of the torsion primes listed in Table \ref{badprimes}, and also adjoining $1/2$ if $\Rc$ has type $C_\ell$ for some $\ell \ge 1$. (This list is chosen in order to apply the results of \cite{Dem:inv}.) This data uniquely determines a split adjoint algebraic group $\Gc$ over $A$ of type $\Rc$ \cite[\S{XXV.1}]{SGA3:new}.
Let $\Tc$ be a split maximal $A$-torus in the simply connected cover $\Gct$ of $\Gc$ and put $W$ for the Weyl group $N_{\Gct}(\Tc)/\Tc$.
\begin{table}[hbt]
\begin{center}
\begin{tabular}{c|ccccccccc}
&&$B_\ell$ ($\ell \ge 3$) or \\
type of $\Rc$&$A_\ell$&$D_\ell$ ($\ell \ge 4$)&$C_\ell$&$E_6, E_7, F_4$&$E_8$&$G_2$\\ \hline
torsion primes&$\emptyset$&2&$\emptyset$&2, 3&2, 3, 5&2 \\ \hline
not very good&divisors&2&2&2, 3&2, 3, 5&2, 3 \\
primes& of $\ell + 1$
\end{tabular}
\caption{Torsion primes and primes that are not very good} \label{badprimes}
\end{center}
\end{table}

The free module $\Lie(\Tc)$ is naturally identified (via a pinning) with $Q^\vee \ot A$ where $Q^\vee$ denotes the root lattice of the dual root system $\Rc^\vee$.
It is classical that $\R[Q^\vee \ot \R]^W$ is a polynomial ring with homogeneous generators $p_1, p_2, \ldots, p_\ell$ where 
\begin{equation} \label{p.degs}
2 = \deg p_1 < \deg p_2 \le \cdots \le \deg p_{\ell-1} < \deg p_\ell = (\text{Coxeter \# of $\Rc$}),
\end{equation}
and that these degrees are all distinct unless $\Rc$ has type $D_{2\ell'}$ in which case both $p_{\ell'}$ and $p_{\ell' + 1}$ have degree $\ell'$.  These generators are not uniquely determined.

\begin{eg}[flat bases] \label{flat}
Whatever the type of $\Rc$,
one can impose an additional condition on the generators of the real ring of invariants: Write $\qform{\, , \, }$ for a Weyl-invariant symmetric bilinear form on $P \ot \R$ where $P = (Q^\vee)^*$ is the weight lattice with basis the fundamental dominant weights $\omega_1, \ldots, \omega_\ell$ and define a bilinear map $b$ on polynomials in $P \ot \R$, $b \!: \R[Q^\vee \ot \R] \times \R[Q^\vee \ot \R] \to \R[Q^\vee \ot \R]$ by 
\[
b(p, p') = \sum_i \sum_j \frac{\partial p}{\partial \omega_i} \frac{\partial p'}{\partial \omega_j} \qform{\omega_i, \omega_j}.
\] 
In \cite{SaitoYS}, the generators $p_1, \ldots, p_\ell$ are said to be a \emph{flat basis} if $\frac{\partial}{\partial p_\ell}b(p, p')$ belongs to $\R$ for all $p, p'$.  (This definition was motivated by the study of logarithmic poles, see \cite{Saito:linear}.)  Flat bases were constructed in \cite{Talamini} for types $E_7$ and $E_8$ and in \cite{SaitoYS} for the remaining types.  The latter paper also proved that there is a \emph{unique} flat basis up to scaling the elements by nonzero real numbers, or interchanging the two invariants of degree $\ell'$ in case $\Rc$ has type $D_{2\ell'}$.
\end{eg}

\begin{lem} \label{demlem}
$A[\Lie(\Tc)]^W$ is a  polynomial ring with homogeneous and indivisible generators $p_1, \ldots, p_\ell$ with degrees as in \eqref{p.degs}.  For every homomorphism of $A$ into a field $k$, the natural map $A[\Lie(\Tc)]^W \ot k \to k[\Lie(\Tc) \ot k]^W$ is an isomorphism and $k[\Lie(\Tc) \ot k]^W$ is a polynomial ring with generators the images of $p_1, \ldots, p_\ell$.
\end{lem}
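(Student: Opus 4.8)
The plan is to deduce everything from Demazure's theorem on the integral invariants of Weyl groups \cite{Dem:inv}. The ring $A$ was built so that the primes it inverts --- those in Table~\ref{badprimes} together with $2$ in the $C_\ell$ cases --- are precisely what is needed both to apply \cite{Dem:inv} and to guarantee that $W$ acts faithfully on $\Lie(\Tc)\ot k$ for every homomorphism $A\to k$. Applying Demazure's theorem to the $W$-lattice $(Q^\vee)^*$, so that $A[\Lie(\Tc)]=\Sym_A((Q^\vee)^*\ot A)$, gives that $A[\Lie(\Tc)]^W$ is a polynomial $A$-algebra. A homogeneous system of generators $p_1,\dots,p_\ell$ of it is automatically indivisible: if $p_i=cg$ with $c\in A$ a non-unit and $g\in A[\Lie(\Tc)]^W$, then writing $g=ap_i+h$ with $a\in A$ and $h$ a form of the same degree not involving $p_i$ (possible since every other generator has positive degree) and comparing coefficients of $p_i$ in the polynomial ring $A[p_1,\dots,p_\ell]$ forces $ca=1$, a contradiction.

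Next I would pin down the degrees by base change to $\R$. Since $A$ is flat over $\Z$ and $\R$ is flat over $A$ (it contains $\Frac A=\Q$), forming $W$-invariants commutes with $-\ot_A\R$, so $A[\Lie(\Tc)]^W\ot_A\R=\R[\Lie(\Tc)\ot_A\R]^W=\R[Q^\vee\ot\R]^W$. The degrees of a homogeneous system of generators of a graded polynomial ring are intrinsic to the ring, so $\deg p_i$ agrees with the degree of the $i$-th classical generator appearing in \eqref{p.degs}. In particular the $p_i$ are algebraically independent over $\R$, a fortiori over $A$, so the monomials in $p_1,\dots,p_\ell$ form an $A$-basis of $A[\Lie(\Tc)]^W=A[p_1,\dots,p_\ell]$.

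For the base-change assertion, fix a homomorphism $\phi\!:A\to k$; since $\phi$ carries units to units, the inverted primes are invertible in $k$ as well. First, $A[p_1,\dots,p_\ell]\ot_A k=k[\bar p_1,\dots,\bar p_\ell]$ is again a polynomial ring, because the monomial $A$-basis remains a $k$-basis (here $A[p_1,\dots,p_\ell]$ is $A$-free, so this holds even though $k$ need not be flat over $A$). Second, $A[\Lie(\Tc)]$ is integral over $A[p_1,\dots,p_\ell]$ --- each $y$ is a root of $\prod_{w\in W}(T-wy)$, whose coefficients are $W$-invariant --- so after base change $k[\Lie(\Tc)\ot k]$ is integral, hence module-finite, over $k[\bar p_1,\dots,\bar p_\ell]$, and therefore so is its subring $k[\Lie(\Tc)\ot k]^W$. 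Third, $k[\Lie(\Tc)\ot k]$ is free of rank $|W|$ over $k[\bar p_1,\dots,\bar p_\ell]$ (by base-changing Demazure's freeness statement, or by miracle flatness together with the identity $\prod_i\deg p_i=|W|$ applied to the graded Hilbert series). Finally, because $W$ acts faithfully on $\Lie(\Tc)\ot k$, the extension $k(\Lie(\Tc)\ot k)/k(\Lie(\Tc)\ot k)^W$ is Galois of degree $|W|$; as $k(\bar p_1,\dots,\bar p_\ell)\subseteq k(\Lie(\Tc)\ot k)^W$ and $[k(\Lie(\Tc)\ot k):k(\bar p_1,\dots,\bar p_\ell)]=|W|$ as well, we get $k(\Lie(\Tc)\ot k)^W=k(\bar p_1,\dots,\bar p_\ell)$. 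A polynomial ring over a field is integrally closed, so $k[\Lie(\Tc)\ot k]^W$, being integral over $k[\bar p_1,\dots,\bar p_\ell]$ and contained in its fraction field, equals it --- which is exactly the claim, the natural map becoming the identity of $k[\bar p_1,\dots,\bar p_\ell]$.

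I expect the base-change isomorphism to be the real obstacle. It is not a formal consequence of the polynomial structure over $A$ and it genuinely fails at bad primes: already for $\Rc$ of type $A_1$ in characteristic $2$ the invariant ring jumps from $\Z[x^2]$ to $\FF_2[x]$, and the analogous failure for $C_\ell$ is exactly why $2$ is adjoined there. So the whole argument leans on having chosen $A$ large enough that Demazure's theorem applies and that $W$ acts faithfully on every $\Lie(\Tc)\ot k$; granting that, the integral-closure argument above closes the gap.
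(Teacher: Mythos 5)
Your proof is correct, but it takes a genuinely different route from the paper's. The paper's proof is essentially a citation: the main result of \cite{Dem:inv} already gives both the polynomial-ring structure of $A[\Lie(\Tc)]^W$ and the base-change isomorphism, and the degrees and indivisibility are read off by tracing through Demazure's Lemma~6 after specializing to $k=\R$. You, by contrast, use Demazure only for the polynomial-ring structure over $A$, prove indivisibility by an elementary coefficient comparison, fix the degrees by flat base change to $\R$, and then \emph{re-derive} the base-change isomorphism via a self-contained Noether-normalization / integral-closure / Galois-degree argument. What this buys is an explicit view of where the hypotheses on $A$ enter --- inverting those primes forces $W$ to act faithfully on $\Lie(\Tc)\ot k$, which your Galois step needs --- whereas the paper's citation hides that inside Demazure. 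The cost is length, plus one step you assert rather than prove, namely the faithfulness itself. For $p\ge 3$ this is Minkowski's lemma (the kernel of $\GL_n(\Z)\to\GL_n(\Z/p)$ is torsion-free, so any finite subgroup injects mod $p$); for $p=2$ one observes that $-1\in W$ exactly for the types where $2$ is inverted in $A$, and separately checks that $S_{\ell+1}$ acts faithfully on the $A_\ell$ coroot lattice mod~$2$ for $\ell\ge 2$. With that filled in, your argument closes cleanly and is a legitimate alternative to the paper's direct appeal to Demazure's base-change theorem.
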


\begin{proof}
The main result of \cite{Dem:inv} says that the arrow in the statement of the lemma is an isomorphism and that the rings are graded polynomial rings.  Taking $k = \R$ and tracing through the proof of ibid., Lemma 6, shows that $A[\Lie(\Tc)]^W$ has homogeneous indivisible generators of the same degrees as those of $\R[\Lie(\Tc) \ot \R]^W$.
 \end{proof}
 
In view of Lemma \ref{demlem}, we may:
 \begin{equation}\label{canonical.def}
\parbox{3.8in}{\emph{Choose indivisible homogeneous $p_1, \ldots, p_\ell \in A[\Lie(\Tc)]^W$ whose images in $\R[\Lie(\Tc) \ot \R]^W$ are a generating set.}}
\end{equation}

 \begin{eg}[type $A_\ell$]
For $\Rc$ of type $A_\ell$, we may identify $\Lie(\Tc)$ with the space of $(\ell+1)$-vectors whose coordinates sum to zero, which identifies $A[\Lie(\Tc)]^W$ with $A[x_1, \ldots, x_{\ell+1}]/(\sum x_i)$.  The fundamental theorem of symmetric polynomials gives that one may take $p_i$ to be the elementary symmetric polynomial in $x_1, \ldots, x_{\ell + 1}$ of degree $i+1$.

The generators $p_1, p_2$ of degrees 2, 3 respectively are the same as those in the flat basis in Example \ref{flat}, but $p_i$ for $i \ge 3$ are different in the two cases, as follows from \cite[2.5.4]{SaitoYS}.
\end{eg}

Fix a homomorphism of $A$ to a field $k$.  The natural map $k[\Lie(\Gct) \ot k]^{\Gc} \to k[\Lie(\Tc) \ot k]^W$ is an isomorphism by \cite[\S{II.3}]{SpSt} or \cite[Th.~4(i)]{KW}, and we write $f_i$ for the pullback of the element $p_i$ chosen in \eqref{canonical.def}.  Note that $f_1$ is a $\Gc$-invariant quadratic form, a scalar multiple of the Killing form.

If $\Rc$ has type $A_1$, then $\Gc$ is $\SO_3$, the identity component of the stabilizer of $f_1$.  For $\Rc$ of higher rank, we have the following:

\begin{thm} \label{canonical}
Suppose $\Rc$ is not of type $A_1$ nor $C_2$, $\car k$ is very good for $\Rc$, and $f \in \{ f_2, \ldots, f_\ell \}$ satisfies:
\begin{enumerate}
\renewcommand{\theenumi}{\roman{enumi}}
\item \label{adj.C} If $\Rc$ has type $C_\ell$ for $\ell \ge 3$, then $f \ne f_\ell$.
\item \label{adj.D} If $\Rc$ has type $D_\ell$ for $\ell \ge 4$, then $\deg f \ne \ell$.
\item \label{adj.A3} If $\Rc$ has type $A_3$, then $f = f_3$ (hence $\deg f = 4$).
\end{enumerate}
If $k$ is infinite, then the naive stabilizer of $f$ has identity component $\Gc(k)$.  If $\car k \ne 2, 3$ and does not divide $\deg f$, then the scheme-theoretic stabilizer of $f$ has identity component $\Gc \times k$.
\end{thm}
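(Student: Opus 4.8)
The plan is to reduce to $k=\kalg$ and then split into two cases according to whether Lemma \ref{slem} applies.

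\emph{Reductions and setup.} By Lemma \ref{demlem} together with the Chevalley restriction isomorphism $k[\Lie\Gct]^{\Gc}\xrightarrow{\sim}k[\Lie\Tc]^W$, the $f_i$ are defined over $A$ and are compatible with base change, so it suffices to treat $k$ algebraically closed; the statements over a general field then descend exactly as in the proof of Theorem \ref{e8}, using $\Gc(k)=\Gc(\kalg)\cap\GL(V)$ and that smoothness may be checked over $\kalg$ (in characteristic $0$ the scheme-theoretic stabilizer is automatically smooth and equals the naive one). Since $\car k$ is very good for $\Rc$, $\Lie(\Gc)$ is a simple Lie algebra, $V:=\Lie(\Gc)=L(\tilde\alpha)$ is irreducible, and $\tilde\alpha$ is a restricted weight (its coordinates on the simple roots are all less than $\car k$ whenever $\car k>0$ is very good), so $V$ is restricted; by the footnote to Lemma \ref{slem} it is tensor indecomposable when $\car k\neq2,3$, and for type $A_\ell$ with $\car k\in\{2,3\}$ — the only way a very good prime can be $2$ or $3$ — tensor indecomposability of $L(\omega_1+\omega_\ell)$ is verified directly. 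Finally $k[V]^{\Gc}=k[f_1,\dots,f_\ell]$ is a polynomial ring with $f_1=c\,q$, so each $f_i$ with $i\ge2$ is algebraically independent from $q$; in particular $f\in k[V]^{\Gc}\smallsetminus k[q]$.

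\emph{Case 1: $(\Gc,V)$ does not appear in Table 1 of \cite{seitzmem}.} One applies Lemma \ref{slem} directly: its first assertion gives that the naive stabilizer has identity component $\Gc$, and — noting $\car k\nmid\ell+1$ for type $A_\ell$ because $\car k$ is very good — its second assertion gives the smoothness statement when $\car k\neq2,3$ and $\car k\nmid\deg f$; the $\PGL(V)$ claim then follows from Remark \ref{homogeneous}.

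\emph{Case 2: $(\Gc,V)$ appears in Table 1 of \cite{seitzmem}.} These are exactly types $B_\ell$, $C_\ell$, $D_\ell$, and $A_3\cong D_3$, where $V$ is $\Lambda^2 k^m$ (types $B,D,A_3$) or $\Sym^2 k^m$ (type $C$) for the standard module $k^m$, and the closed connected overgroups of $\Gc$ in $\SL(V)$ are precisely $\Gc$, $\SL_m$, $\SO(q)$, and $\SL(V)$, with $q=c\,f_1$. For the naive statement: $\Sred$ is a simple connected overgroup of $\Gc$ (reductive as $V$ is irreducible for $\Gc$, semisimple as $f$ is non-constant, simple as $V$ is tensor indecomposable), hence one of these four. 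It is not $\SO(q)$ or $\SL(V)$ because $k[V]^{\SO(q)}=k[q]$ and $k[V]^{\SL(V)}=k$ do not contain $f$; it is not $\SL_m$ because $k[V]^{\SL_m}$ equals $k$ (type $B$), $k[\det]$ with $\det=c\,f_\ell$ of degree $m=2\ell$ (type $C$), or $k[\mathrm{Pf}]$ with $\mathrm{Pf}$ of degree $\ell$ a scalar multiple of the relevant generator (types $D$ and $A_3$), so the three displayed hypotheses on $f$ — together with the exclusion of $A_1$ (no candidate $f$) and of $C_2$ (the only candidate would be $\det$) — force $f\notin k[V]^{\SL_m}$. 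Hence $\Sred=\Gc$. For the smoothness statement (so $\car k\ne2,3$, $\car k\nmid\deg f$), suppose $\Lie(S)\ne\Lie(\Gc)$ and apply Proposition \ref{containment} to a minimal $\Gc$-invariant restricted subalgebra $\fL$ with $\Lie(\Gc)<\fL\le\sl(V)$: its hypotheses hold because $\Lie(\Gc)$ is simple and its centralizer in $\sl(V)$ consists of scalars, which are not Lie invariant under $f$ by Example \ref{lie.scalar} (as $\car k\nmid\deg f$) and hence lie in neither $\Lie(S)$ nor $\fL$, so the centralizer in $\fL$ is $0$. This produces a simple simply connected $H$ and $\phi$ with $\fL=d\phi(\Lie H)$ and $\Gc<\phi(H)\le\SL(V)$, where $\phi(H)\ne\Gc$ since $\fL\supsetneq\Lie(\Gc)$, so $\phi(H)\in\{\SL_m,\SO(q),\SL(V)\}$ and $f$ is Lie invariant under $\fL$. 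But Lemma \ref{lie.inv} identifies the Lie invariants of $\Lie(\SL(V))$, $\Lie(\SO(q))$, and $\Lie(\SL_m)$ as $k[V]^{(p)}$, $k[V]^{(p)}[q]$, and $k[V]^{(p)}[\det]$ or $k[V]^{(p)}[\mathrm{Pf}]$; intersecting with $k[V]^{\Gc}$ — where, over the perfect field $k$, $k[V]^{(p)}[g]\cap k[V]^{\Gc}$ is the polynomial ring on $g$ and the $p$-th powers of the remaining $f_j$ — none of these contains $f_i$ for $i\ge2$ distinct from the index attached to $g$, which is exactly what the three hypotheses guarantee. This contradiction gives $\Lie(S)=\Lie(\Gc)$, so $S$ is smooth with $S^\circ=\Gc$, and Remark \ref{homogeneous} handles $\PGL(V)$.

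\emph{Main obstacle.} The delicate step is the appeal to Lemma \ref{lie.inv}\eqref{lie.1} for the intermediate group $\SL_m$: that lemma requires the degree of the basic $\SL_m$-invariant ($\det$, of degree $2\ell$, for type $C$; $\mathrm{Pf}$, of degree $\ell$, for type $D$) to be prime to $\car k$, which can fail precisely when $\car k\mid\ell$ (so $\ell\ge5$; for $A_3$ one has $\deg\mathrm{Pf}=3$ and $\car k\ne3$ is assumed). I expect this case to be handled by returning to Skryabin's theorem directly: the locus where $\det$ (resp.\ $\mathrm{Pf}$) and its differential both vanish is the rank-$\le 2\ell-2$ subvariety, which has codimension $3$ in every characteristic, so the conclusion of Lemma \ref{lie.inv}\eqref{lie.1} still holds; alternatively one restricts Lie invariance to a sub-Levi $\SL_{2\ell-1}<\SL_m$, whose basic invariant has degree $2\ell-1$, prime to $\car k=p\mid\ell$. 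Everything else — the precise list of $(\Gc,V)$ in Seitz's Table 1 and the associated overgroup lattices, and the identification of $\det$ and $\mathrm{Pf}$ with specific $f_j$ — is routine bookkeeping.
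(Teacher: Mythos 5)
Your proof follows the same overall strategy as the paper's (reduce to Lemma~\ref{slem}, identify when the adjoint representation appears in Seitz's Table~1, and observe that the three displayed exceptions rule out the extra $\SL_m$-invariants), but you make explicit a step that the paper leaves terse. The paper's proof ends with ``Applying Lemma~\ref{slem} completes the proof,'' yet for types $B$, $C$, $D$, $A_3$ the pair $(\Gc,V)$ \emph{is} in Seitz's Table~1, so the lemma's hypotheses are not literally satisfied and its argument must be rerun with the extra overgroup $\SL_m$ (or rather $S^2\SL_m$, $\wedge^2\SL_m$) adjoined to the list $\{\SL(V),\SO(V),\Sp(V)\}$. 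The paper carries this out for the naive-stabilizer part via the analysis of lines $I_1$, $I_2$, $I_4$, $I_6$, but for the smoothness part it only verifies the $h^pq^r$ condition (i.e.\ $f\notin k[V]^{(p)}[q]$) and does not explicitly address the possibility that the $\fL$ produced by Proposition~\ref{containment} equals $d\phi(\Lie(\SL_m))$. Your Case~2 makes this step precise, which is the right move.

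You also correctly flag the genuine subtlety: ruling out $\fL = d\phi(\Lie(\SL_m))$ via Lemma~\ref{lie.inv}\eqref{lie.1} requires $p\nmid\deg(\det)=2\ell$ (type $C$) or $p\nmid\deg(\mathrm{Pf})=\ell$ (type $D$), which the theorem's hypotheses do not guarantee (e.g.\ $D_5$ with $p=5$ is very good and $p\nmid\deg f$ can hold with $\deg f\in\{4,6,8\}$). Your first proposed fix is correct and sufficient: even when $p\mid\deg(\det)$ or $p\mid\deg(\mathrm{Pf})$, the locus where the differential of $\det$ (resp.\ $\mathrm{Pf}$) vanishes is still the rank $\le m-2$ (resp.\ $\le m-4$) locus, which is contained in the hypersurface and has codimension $\ge 3$ (resp.\ $\ge 6$), so Skryabin's theorem still yields the conclusion of Lemma~\ref{lie.inv}\eqref{lie.1}; then $\deg f<\deg\det$, $\deg\mathrm{Pf}$ together with $p\nmid\deg f$ excludes $f$ from $k[V]^{(p)}[\det]$ and $k[V]^{(p)}[\mathrm{Pf}]$. (Your second alternative, restricting to a sub-Levi $\SL_{2\ell-1}$, is less convincing since the restricted module is no longer irreducible and the invariant theory changes; but you do not need it.) So your proof is correct, and the point you raise in the ``Main obstacle'' paragraph is a real gap in a literal reading of the paper's proof, filled by exactly the argument you give.
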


\begin{proof}
We reduce the proof to Lemma \ref{slem}.  Because the characteristic is very good, $\Lie(\Gct)$ is a restricted irreducible representation of $\Gc$ \cite{Hiss}, and is tensor indecomposable \cite[1.6(i)]{seitzmem}.  This representation appears only in lines $I_1$, $I_2$, $I_4$, and $I_6$ of Seitz's table. 

Line $I_1$ says that $\PSp_{2n}$ is contained in $S^2 \SL_{2n}$; this larger group has a unique invariant of degree $2n$, hence exception \eqref{adj.C}.  Line $I_2$ says that $\SO_{2n+1}$ is contained in $\wedge^2 \SL_{2n+1}$, which has no nonconstant invariants so this gives no exceptions.  Line $I_4$ says $\PSO_{2n}$ is contained in $\wedge^2 \SL_{2n}$ which has a degree $n$ invariant, hence exception \eqref{adj.D}.  Finally, line $I_6$ says that $\PGL_4$ is contained in $\wedge^2 \SL_6$, which has a degree 3 invariant, hence exception \eqref{adj.A3}.

Suppose now that additionally $\car k \ne 2, 3$.  The restriction of $f$ to $\Lie(\Tct) \ot k$ cannot be in the $k$-span of $h^p q^r$ for some nonconstant $h \in k[\Lie(\Tct) \ot k]$ and $r  \ge 1$, for if it were then $h^p$ would also belong to $k[\Lie(\Tct) \ot k]^W$ which would contradict the fact that the restriction of $f$ is a generator.

Applying Lemma \ref{slem} completes the proof of the proposition in case $k$ is algebraically closed.  The claim for arbitrary $k$ follows.
\end{proof}

Although type $C_2$ is excluded from the theorem, in that case $\Gc = \PSp_4 = \SO_5$ is the identity component of the stabilizer of the degree 8 homogeneous polynomial $f_1 f_2$, the degree 4 (but inhomogeneous) $f_1 + f_2$, or anything in $k[f_1, f_2] \setminus (k[f_1] \cup k[f_2])$.

\begin{eg}[$E_8$ octic]
In the case of $E_8$, we choose $p_1, \ldots, p_8$ as in Example \ref{flat}, so that their images over $\R$ are rational multiples of the flat basis $\hat{q}_1, \ldots, \hat{q}_8$ from \cite[p.~15]{Talamini}.  Taking $p_1 := \hat{q}_1$ and $p_2 := 15\hat{q}_2/8$ gives indivisible polynomials with integer coefficients --- to see this it is helpful to refer to \cite[p.~1089]{Mehta}.  In particular,
\[ 
p_2 = 8x_1^8 - 28x_1^4 x_2^4 -14x_1^4 x_2^2 x_3^2 + \cdots \eand p_1^4 = x_1^8 + 19x_1^4 x_2^4+ 72 x_1^4 x_2^2 x_3^2 + \cdots
\]
where the $x_1, \ldots, x_8$ are a basis for the weight lattice defined in \cite{Mehta}.
From this, it is clear that the image of $p_2$ in $k[\Lie(\Tct) \ot k]^W$ is not in the $k$-span of $p_1^4$, not even when $\car k = 2, 3$ which we have excluded.  (Although $p_2 - 8p_1^4$ is divisible by 5.)  The pullback $f$ of $p_2$ to $\Lie(\Gct) \ot k$ then provides an octic form whose scheme-theoretic stabilizer has identity component $E_8 \times k$, and this octic form is \emph{canonical} in the sense that it is determined up to multiplication by a unit in $\Z[1/30]$ by the property of restricting to be an element in the flat basis for the Weyl invariants.
\end{eg}

\section{3875-dimensional representation of $E_8$} \label{E8.3875.sec}

\begin{lem}
Let $G$ be a simple algebraic group of type $E_8$ over a field $k$ and let $V$ be the second smallest  faithful  irreducible representation of $G$.  There exist nonzero $G$-equivariant bilinear maps $b\!: V \times V \to k$ and $\star \!: V \times V \to V$ satisfying
\[
v_1 \star v_2 = v_2 \star v_1 \eand b(v_1 \star v_2, v_3) = b(v_{\pi(1)} \star v_{\pi(2)}, v_{\pi(3)})
\]
for all $v_1, v_2, v_3 \in V$ and every permutation $\pi$ of $\{ 1, 2, 3 \}$, and these properties determine $b$ and $\star$ uniquely up to multiplication by an element of $k^\times$. 
\end{lem}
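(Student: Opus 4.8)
The plan is to reduce the statement to two multiplicity-one facts about the group $G$ of type $E_8$ and its $3875$-dimensional irreducible representation. Since $G$ is split, the $k$-spaces of $G$-invariant tensors that arise are defined over $k$, and a one-dimensional such space is spanned by a $k$-rational vector unique up to $\kx$; so it suffices to prove everything after base change to an algebraic closure of $k$ (reading ``$G$-equivariant'' as ``morphism of rational $G$-modules'' when $k$ is finite) and then descend. Assume then that $k$ is algebraically closed, and identify $V$ with $L(\la_1)$: indeed $248$ and $3875$ are the two smallest dimensions of faithful irreducible representations of $G$ in every characteristic --- the adjoint representation is irreducible always, and one checks (e.g.\ via \cite{luebeck}) that $L(\la_1)$ remains $3875$-dimensional --- so $V$ is unambiguous.

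First I would produce $b$. Since $E_8$ has no nontrivial outer automorphisms (equivalently $w_0 = -1$ on its weight lattice), $V$ is self-dual, so by Schur's lemma the space of $G$-invariant bilinear forms on $V$ is one-dimensional; a nonzero such form $b$ is nondegenerate, because its radical is a proper $G$-submodule of the irreducible $V$, and any other invariant bilinear form is a $\kx$-multiple of it. Moreover $b$ is symmetric, because every irreducible representation of $E_8$ is of orthogonal type. Using $b$ to identify $V$ with $V^*$, a $G$-equivariant bilinear map $\star\colon V\times V\to V$ is the same datum as a $G$-invariant trilinear form $T$ on $V$ via $T(v_1,v_2,v_3) = b(v_1\star v_2, v_3)$; nondegeneracy of $b$ makes $\star$ symmetric precisely when $T$ is symmetric in its first two slots, and the displayed permutation condition on $b(v_1\star v_2, v_3)$ holds precisely when $T$ is fully symmetric. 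So, once $b$ is fixed, the products $\star$ satisfying the hypotheses are exactly those coming from nonzero $G$-invariant symmetric trilinear forms on $V$ --- equivalently, for $\car k \ne 2,3$, from nonzero elements of $(\Sym^3 V^*)^G$. The lemma thus reduces to the single claim that the space of $G$-invariant symmetric trilinear forms on $V$ is one-dimensional.

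To prove that, in characteristic $0$ I would decompose $\Sym^3 V$ into $E_8$-irreducibles (a finite computation from weight multiplicities, or with a computer algebra system) and read off that the trivial constituent occurs exactly once; dually $\dim(\Sym^3 V^*)^G = 1$. The existence half then holds in every characteristic, since $\dim(\Sym^3 V^*)^G$ can only grow under reduction modulo $p$, exactly as in the proof of Theorem~\ref{e8}. Assembling the pieces: $b$ is the invariant bilinear form of the previous paragraph (unique up to $\kx$), $\star$ is the symmetric product attached under the above dictionary to the invariant cubic form (unique up to $\kx$ once $b$ is fixed), and these two one-dimensionality statements give precisely the asserted uniqueness of $b$ and of $\star$.

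The main obstacle is the uniqueness clause in the bad characteristics $p = 2, 3, 5$: reduction mod $p$ could in principle create a second, genuinely new $G$-invariant cubic form that the semicontinuity argument does not detect. I would try to rule this out by showing that $\Sym^3 V$ is a direct summand of a tilting module for $G$, so that the multiplicity of the trivial summand --- hence $\dim(\Sym^3 V^*)^G$ --- is independent of the characteristic; failing that, by a direct analysis of the $E_8$-module $\Sym^3 L(\la_1)$ for these three primes (also re-examining the symmetric-cube versus divided-cube distinction in characteristics $2$ and $3$, and re-confirming self-duality and orthogonality of $V$ there). I expect the tilting-module route to be the cleanest, but verifying its hypotheses for $p \in \{2,3,5\}$ is where the real effort lies.
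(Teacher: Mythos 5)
Your plan is in the same spirit as the paper's (reduce to an algebraically closed field, obtain $b$ via Schur's lemma and self-duality, recast $\star$ as an invariant trilinear form, and pull existence back from characteristic $0$), but there is a concrete error that breaks the argument in characteristic $2$. You assert that $L(\la_1)$ has dimension $3875$ in all characteristics; in fact it has dimension $3626$ when $\car k = 2$, as the paper notes immediately after the lemma, citing \cite[A.53]{luebeck}. Consequently the Weyl module $\V\otimes k$ with highest weight $\la_1$ is \emph{not} $V$ when $\car k = 2$, and your semicontinuity argument only produces an invariant tensor on $\V\otimes k$, not on the irreducible quotient $V$. The step your sketch is missing, and the crux of the paper's proof, is the descent from the Weyl module to $V$: the identity $b(v_1\star v_2, v_3) = b(v_1, v_2\star v_3)$, which holds on $\V\otimes k$ by base change from $\C$, forces the (unique) maximal proper submodule of $\V\otimes k$ to be an ideal for $\star$ and to lie in the radical of $b$, so both structures factor through to $V$.

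Two smaller points. The paper works throughout with a single $E_8$-invariant tensor in $V^{\otimes 3}$, verified over $\C$ to be symmetric, rather than with $(\Sym^3 V^*)^G$; this sidesteps the divided-power versus symmetric-power distinction you flag for $p = 2, 3$. As for the uniqueness clause in bad characteristics, which you rightly call the hard part and leave open: the paper is also terse here, disposing of it with ``uniqueness and the identities follow because they hold after base change to a separable closure'' together with its $\Z$-form framework, and does not invoke tilting modules. Your instinct that this step deserves scrutiny is sound, but you would need to actually close it rather than leave it as a to-do. Finally, the non-split case is handled in the paper by explicit Galois descent from a $1$-cocycle $z \in Z^1(k, \E_8 \times k)$; your parenthetical ``and then descend'' needs to be fleshed out to that level.
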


The representation $V$ in the lemma has dimension 3875 if $\car k \ne 2$ and 3626 if $\car k = 2$, cf.~\cite[A.53]{luebeck}.  In either case the highest weight is the one denoted $\omega_1$ in \cite{Bou:Lie}.

\begin{proof}
Put $\E_8$ for a split semisimple group scheme of type $E_8$ over $\Z$ and $\V$ for a Weyl module of $\E_8$ over $\Z$ with highest weight $\la_1$.  Then $\E_8 \times \C$ is the complex group $E_8$ and $\V \ot \C$ is its second smallest faithful  irreducible representation.
Note that $\V \ot \C$ is orthogonal and has a unique $E_8$-invariant line in $(V\ot\C)^{\otimes 3}$; this line consists of symmetric tensors.  It follows that the same is true for the representation $\V \ot \Q$ of $\E_8 \times \Q$, and we find a symmetric bilinear form $b$ on $\V$ and a bilinear map $\star \!: \V \times \V \to \V$ which are both indivisible and $\E_8$-equivariant.  We can interpret $\star$ as a (not necessarily associative) product operation on $\V$, and we define corresponding operations on $\V \ot k$ by base change.  Because the invariant tensor in $(\V \otimes \C)^{\otimes 3}$ is symmetric, the displayed equations hold in case $k = \C$, and it follows by base change that they hold also for arbitrary $k$.  

Suppose $G$ is $k$-split.  
If $\car k \ne 2$, then $\V \ot k$ is irreducible, i.e., is $V$ (because $G$ is split), and the proof is complete.
If $\car k = 2$, then $\V \ot k$ is reducible and the second displayed identity implies that the maximal proper submodule of $\V \ot k$ is an ideal for the multiplication $\star$.  It follows that $\star$ and $b$ factor through to give a multiplication and a nondegenerate symmetric bilinear form on the irreducible quotient $V$, both of which are nonzero and $(\E_8 \times k)$-invariant.

In the general case, $G$ is isomorphic to a Galois twist of $\E_8 \times k$ by a 1-cocycle $z \in Z^1(k, \E_8 \times k)$.  Using $z$ and Galois descent from a separable closure of $k$ gives $G$-equivariant maps $b$ and $\star$ defined on $V$ over $k$.  Uniqueness and the identities follow because they hold after base change to a separable closure.
\end{proof}

We offer the following observations about the multiplication $\star$ on $V$.  If $\car k \ne 2, 3$, then the automorphism group scheme of $(V, \star)$ is $G$ by Lemma \ref{slem}.

If $\car k = 3$, then Lemma \ref{slem} gives that the automorphism group of this multiplication has $k$-points $G(k)$.  

If $\car k = 2$, then by \cite{seitzmem}, the only other closed connected overgroups of $\E_8(k)$ in $\SL(V)$ are $\Y = \SO(V)$ or $\Sp(V)$, but these cannot stabilize $\star$: the highest weight $\lambda$ of the defining representation of such an $\Y$ is not in the root lattice but $2\lambda$ is, so there is no nonzero $\Y$-invariant multiplication. Therefore $G(k)$ is the naive automorphism group of the multiplication $\star$ on $V$.  Alexander Premet asks: Does this multiplication satisfy the Jacobi identity?

\section{Simple groups as stabilizers of cubic forms} \label{cubic.sec}

Groups of type $B$ and $D$ over an algebraically closed field $k$ are isogenous to $\SO_n$ for some $n$, i.e., they are isogenous to the identity component of the stabilizer in $\GL_n$ of a quadratic form.  
Analogous statements hold for type $E_6$ with a cubic form and type $E_7$ for a quartic form (as long as $\car k \ne 2$).  What about types $C$, $G_2$, $F_4$, $E_8$, and also $A$?  We observe now that all of these, and $E_7$ also, can almost always be obtained as stabilizers of \emph{cubic} forms. 
This result is new even in the case  $k = \C$.  

\begin{thm} \label{for:cubic0}  Let $G$ be a simple and simply connected algebraic group over an algebraically closed field $k$ with (a) $\car k = 0$ or (b) $\car k > 2\rank G + 1$.  There exists an irreducible $kG$-module $V$ and 
a  homogeneous polynomial $f \in k[V]$ of degree   $3$ such that the image of $G$ in $\GL(V)$ is the identity component of the scheme-theoretic stabilizer of $f$.
\end{thm}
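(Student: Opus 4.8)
The plan is to reduce the theorem to Lemma~\ref{slem}. For each Dynkin type I will exhibit an irreducible $kG$-module $V$ carrying a nonzero cubic form $f \in k[V]^G$ such that $(G,V)$ does not appear in Table~1 of \cite{seitzmem}; since $G$ is semisimple its image in $\GL(V)$ lies in $\SL(V)$, and Lemma~\ref{slem} then identifies the identity component of the stabilizer of $f$ with that image. The hypothesis that $\car k = 0$ or $\car k > 2\rank G + 1$ makes the auxiliary conditions in the second half of Lemma~\ref{slem} automatic: it forces $\car k \ne 2, 3$, hence $\car k \nmid 3 = \deg f$; it gives $\car k > n+1$ when $G$ has type $A_n$; each $V$ below is restricted (the highest weights involved have coefficients $\le 2$, except the $4\la_1$ of $\Sym^4(k^2)$, where $4 < p$ since $p$ is prime and $p > 3$) and hence, as $\car k \ne 2,3$, tensor indecomposable by \cite[1.6]{seitzmem}; and a nonzero cubic cannot lie in $k[V]^{(p)}[q]$, since that ring has no nonzero homogeneous element of degree $3$ once $p \ge 5$. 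Lemma~\ref{slem} and Remark~\ref{homogeneous} then give that the scheme-theoretic stabilizers of $f$ in $\GL(V)$ and of $kf$ in $\PGL(V)$ are smooth with identity component the image of $G$.

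It remains to produce the pairs $(V,f)$, type by type. For type $A_n$ with $n \ge 2$, take $V = \sl_{n+1}$, the adjoint representation, with $f(x) = \operatorname{tr}(x^3)$. For types $B_n$ ($n \ge 2$) and $D_n$ ($n \ge 3$), pull back through $\SO_N$ (with $N = 2n+1$, resp.\ $2n$) the representation $V = L(2\la_1)$ of trace-zero symmetric $N \times N$ matrices, again with $f(x) = \operatorname{tr}(x^3)$; here $\SO_N$ preserves the quadratic form $\operatorname{tr}(x^2)$, but $f$ does not lie in the ring it generates, which has only even-degree elements. For type $C_n$ with $n \ge 3$, realize $V = L(\la_2) = \wedge^2_0(k^{2n})$ as the trace-zero part of $J^{-1}\cdot\wedge^2(k^{2n}) \subset \gl_{2n}$, where $J$ is the invariant alternating form, and set $f(\omega) = \operatorname{tr}\bigl((J^{-1}\omega)^3\bigr)$; evaluating on a diagonal element shows $f \ne 0$ once $n \ge 3$. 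For type $E_6$, take $V = L(\la_1)$, the $27$-dimensional module, with $f$ the cubic norm of the associated Jordan algebra; for type $F_4$, take $V = L(\la_4)$, the $26$-dimensional trace-zero Jordan algebra, with $f$ the restriction of that cubic norm. For type $E_8$, take $V = L(\la_8)$ and let $f$ be an indivisible integral generator of the rank-one space of invariant cubics, equivalently $v \mapsto b(v\star v, v)$ for the invariant bilinear form $b$ and commutative product $\star$ of \S\ref{E8.3875.sec}, which is nonzero because the unique invariant line in $V^{\otimes 3}$ consists of symmetric tensors. The low-rank coincidences are handled via exceptional isogenies: $A_1 = B_1 = C_1$ by $V = \Sym^4(k^2)$ with the classical catalecticant cubic, and $C_2 \cong B_2$ by the type-$B$ construction; the type-$D$ construction above already covers $A_3 = D_3$. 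This leaves types $G_2$ and $E_7$, which are the genuinely delicate cases: their adjoint modules carry no cubic invariant (all Chevalley invariants have even degree), and the obvious small modules either carry no cubic or factor through a proper classical subgroup --- for instance the $27$-dimensional module of $G_2$ factors through $\SO_7$. For these I would search among the modules $L(\la)$ of small height for one admitting a nonzero cubic invariant while keeping $G$ maximal in the ambient classical group; natural candidates are $L(2\la_2)$ for $G_2$ and $L(\la_6)$ for $E_7$ (each a composition factor of the symmetric square of a smaller module, on which a $\operatorname{tr}(\Phi^3)$-type invariant lives), and one then checks that this invariant restricts to a nonzero cubic on $L(\la)$.

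The main obstacle is twofold. First, one must verify case by case that $(G,V)$ is absent from Seitz's Table~1 --- equivalently, that the image of $G$ is maximal among closed connected subgroups of the ambient $\SL(V)$, $\SO(V)$, or $\Sp(V)$ --- and reconcile the low-rank coincidences $B_1 = C_1 = A_1$, $B_2 = C_2$, and $A_3 = D_3$, the last because the adjoint of $A_3$ itself lies in Seitz's table (it is $\wedge^2(k^6)$ for $\SO_6 < \SL_6$), so one must use the $D_3$ model instead. This inspection of \cite{seitzmem} is exactly what rules out the naive choices, since the adjoint modules of types $B$, $C$, $D$ support no cubic at all and the natural small module of $G_2$ factors through $\SO_7$. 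Second, for $G_2$ and $E_7$ one must exhibit a module carrying a cubic invariant in the first place, which is not automatic. A final, routine point is that each $f$ is nonzero after reduction modulo $p$: for the trace cubics this follows from explicit diagonal witnesses together with $\car k \ne 3$, for the Jordan norms from their integral models, and for the $E_8$ cubic from the choice of an indivisible integral generator.
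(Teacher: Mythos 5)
You take essentially the same route as the paper: reduce via Lemma~\ref{slem} to producing, for each Dynkin type, a restricted irreducible module $V$ carrying a $G$-invariant cubic with $(G,V)$ absent from Seitz's Table~1, together with the auxiliary checks for smoothness of the scheme-theoretic stabilizer. Your choices for the orthogonal types (types $B$ and $D$, including the low-rank coincidences $A_1=B_1$, $B_2=C_2$, $A_3=D_3$), for $A_n$ with $n\ge 4$, for $C_n$ with $n\ne 4$, and for $E_6$, $F_4$, $E_8$ all agree with the paper's.

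There is, however, a genuine gap at $C_4$. You apply the uniform $C_n$ construction, $V=L(\la_2)$ (the trace-zero part of $\wedge^2(k^{2n})$) with $f(\omega)=\operatorname{tr}\bigl((J^{-1}\omega)^3\bigr)$, also to $n=4$. But $(C_4,L(\la_2))$ \emph{does} appear in Seitz's Table~1: here $\dim V=27$ and $C_4<E_6<\SL_{27}$ is an irreducible chain, with $V$ the restriction to $\Sp_8$ of the $27$-dimensional minuscule module of $E_6$. So Lemma~\ref{slem} is simply unavailable for this pair; worse, the cubic you write down is (up to scalar) the $E_6$-invariant Jordan norm form, so the stabilizer has identity component $E_6$, not $C_4$. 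The paper's proof of Theorem~\ref{cubic} explicitly restricts Example~\ref{self.1} to $n=3$ and $n\ge 5$ for exactly this reason, and covers $C_4$ instead by one of the substitute modules $L(2\la_2)$, $L(2\la_4)$, or $L(\la_2+\la_4)$ from Table~\ref{cubic.table}.

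The $G_2$ and $E_7$ cases are also not actually settled in your proposal: you correctly flag them as the delicate ones and suggest plausible candidates, but then write ``I would search\ldots'' without carrying out either half of the required verification (existence of an invariant cubic, and absence from Seitz's Table~1). The paper resolves both by listing verified modules in Table~\ref{cubic.table} ($L(2\la_1)$ or $L(2\la_2)$ for $G_2$; $L(2\la_1)$ or $L(\la_2+\la_7)$ for $E_7$). Your proposed $L(2\la_2)$ for $G_2$ coincides with one of the paper's choices; your proposed $L(\la_6)$ for $E_7$ does not, and would need its own check. Finally, for $A_2$ you take the adjoint $\sl_3$ where the paper uses the smaller module $L(2\la_1)=\Sym^2_0(k^3)$; that difference is harmless, as the two cubics agree up to rescaling, but again it is worth confirming independently that $(\PGL_3,\sl_3)$ is not in Seitz's Table~1 rather than inheriting this from the paper's different choice.
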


At the cost of replacing cubic forms in some cases with quadratic forms, we can ease the hypothesis on the characteristic.

\begin{thm} \label{cubic}
Let $G$ be a simple and simply connected algebraic group over an algebraically closed field $k$ of characteristic
$p \ne 2$.  
Assume that if $G$ is of type $A_{n-1}$ or $C_n$, then $p$ does not divide $n$. 
There exists an irreducible and tensor indecomposable $kG$-module $V$ and
a  homogeneous polynomial $f \in k[V]$ of degree $2$ or $3$ such that the image of $G$ in $\GL(V)$ is the identity component of the naive stabilizer of $f$
\end{thm}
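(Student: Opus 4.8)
The engine is Lemma~\ref{slem}: for a simple algebraic group $X$ acting on an irreducible, restricted, tensor-indecomposable module $V$ with $(X,V)$ absent from Table~1 of \cite{seitzmem}, that lemma identifies the identity component of the naive stabilizer of any $f\in k[V]^X\setminus k[q]$ with $X$, where $q$ is an invariant quadratic form (or $0$). Since an invariant quadratic form generates a subring $k[q]$ concentrated in even degrees, every \emph{nonzero} invariant cubic automatically lies outside $k[q]$, so that side condition is free in degree~$3$. The plan, isogeny class by isogeny class, is therefore: either exhibit an irreducible $kG$-module carrying a nonzero invariant cubic and quote Lemma~\ref{slem}, or---when $G$ is isogenous to an orthogonal group---take the natural module with its quadratic form and invoke the classical fact that, in characteristic $\neq 2$, the naive stabilizer of a nondegenerate quadratic form is the full orthogonal group, with identity component the special orthogonal group. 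The modules used are broadly parallel to those for Theorem~\ref{for:cubic0}; the point is that they persist into smaller characteristic.

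\emph{Orthogonal cases (degree $2$).} For $G$ of type $B_\ell$ or $D_\ell$ I would take $V$ the natural module and $f=q$; since $p\neq 2$ the naive stabilizer of $q$ has identity component $\SO(V)$, which is the image of $G$. The three low-rank isomorphisms are handled the same way, using $\sl_2$ (the natural $\SO_3$-module) for $A_1$, $\wedge^2 k^4$ with its Pl\"ucker quadratic form (the natural $\SO_6$-module) for $A_3$, and $L(\la_2)$ of $\Sp_4$ (the natural $\SO_5$-module) for $C_2$. All of these are irreducible and tensor-indecomposable because $p\neq 2$.

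\emph{Cubic cases (degree $3$).} For the remaining types I would produce a nonzero invariant cubic and apply Lemma~\ref{slem}. Several arise by a single device: for the smallest faithful module $W$ of $G$, the $G$-module $S^2W$ (when $W$ is orthogonal or not self-dual) or $\wedge^2W$ (when $W$ is symplectic) has an irreducible constituent $V$ not contained in the $\mathfrak{sp}$- or $\mathfrak{so}$-part of $\End(W)$, and on such a $V$ the cubic $A\mapsto\mathrm{tr}(A^3)$ is a nonzero $G$-invariant (the hypothesis on $V$ prevents the eigenvalues of $A$ from being forced into $\pm$ pairs). Concretely: for $A_{n-1}$ with $n\geq 3$, $n\neq 4$, take $V=\sl_n$---irreducible and tensor-indecomposable exactly because $p\nmid n$---with $f(x)=\mathrm{tr}(x^3)$ (nonzero for $n\geq 3$), so the image $X$ of $G$ is $\PGL_n$; for $C_\ell$ with $\ell\geq 3$, the symplectic form spans a line that splits off $\wedge^2 k^{2\ell}$ precisely because $p\nmid\ell$, and $V:=L(\la_2)$ is the complementary summand, on which one has the degree-$3$ generator of the $\Sp_{2\ell}$-invariants of $\wedge^2 k^{2\ell}$ (present once $\ell\geq 3$); for $G_2$, realize $V=L(2\la_1)$ as the trace-zero self-adjoint operators on the $7$-dimensional $L(\la_1)$ (a summand of $S^2L(\la_1)$) with $f(A)=\mathrm{tr}(A^3)$; for $F_4$ and $E_6$ take the smallest faithful module with its cubic norm form; for $E_7$ take $V=L(\la_6)$, the constituent of $\wedge^2 L(\la_7)$ complementary to the symplectic form, again with $f(A)=\mathrm{tr}(A^3)$; and for $E_8$ take $V=L(\la_1)$, the $3875$-dimensional module, with $f(v)=b(v\star v,v)$ as in \S\ref{E8.3875.sec}. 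In each case $V$ is a restricted tensor-indecomposable irrep, $X$ is the image of $G$, and $(X,V)$ is absent from Table~1 of \cite{seitzmem}, so Lemma~\ref{slem} concludes.

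\emph{Main obstacle.} The real work is the case-by-case verification behind that last sentence: that the displayed cubics are genuinely nonzero $G$-invariants, and that each $(X,V)$ really avoids Table~1 of \cite{seitzmem}---the only near miss is $(\PGL_4,\sl_4)$, which is why $A_3$ is routed through the degree-$2$ alternative rather than through $\sl_4$ with its cubic. More delicate is the characteristic-$3$ bookkeeping (the only small characteristic still permitted): one must check that each chosen module remains irreducible and tensor-indecomposable and that its invariant cubic does not degenerate. This is exactly what the divisibility hypotheses $p\nmid n$ for types $A_{n-1}$, $C_n$ secure (irreducibility of $\sl_n$; the splitting of $\wedge^2 k^{2\ell}$), and it is why types $B$, $D$ and the low-rank coincidences use a quadratic form, where characteristic $3$ causes no trouble. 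In the handful of remaining small characteristics where the chosen exceptional module shrinks---$p=3$ for $F_4$, $p=7$ for $E_7$---the constituent $V$ becomes a quotient rather than a submodule of $S^2W$ resp.\ $\wedge^2W$, and a separate argument (or a different module) is needed there.
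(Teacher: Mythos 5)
Your strategy --- feeding a degree-$3$ invariant through Lemma~\ref{slem} for each isogeny class, and falling back to the quadratic form on the natural module for types $B$, $D$ and the low-rank coincidences --- is exactly the paper's, but two of your concrete cubic-and-module choices actually fail, and you flag the small-characteristic exceptional cases as unfinished. First, $\mathrm{tr}(x^3)$ vanishes identically on $\sl_n$ in characteristic $3$: Newton's identity on trace-zero matrices gives $\mathrm{tr}(x^3)=3e_3$, so this cubic is the zero polynomial whenever $p=3$, which the hypothesis $p\nmid n$ does not exclude (e.g.\ $A_4$). The same problem hits your $G_2$ and $E_7$ cubics $\mathrm{tr}(A^3)$. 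The paper instead takes the degree-$3$ coefficient $e_3$ of the characteristic polynomial (Example~\ref{Lie.SL}), which is nonzero in every characteristic. Second, your blanket choice $V=L(\la_2)$ for $C_\ell$, $\ell\ge 3$, goes wrong at $\ell=4$: the pair $(\Sp_8,L(\la_2))$ with $\dim L(\la_2)=27$ \emph{does} appear in Seitz's Table~1, because $\Sp_8 < E_6 < \SL_{27}$, and the degree-$3$ invariant is precisely the $E_6$-cubic, whose stabilizer has identity component $E_6\supsetneq\Sp_8$. The paper therefore routes $C_4$ through larger modules ($2\la_2$, $2\la_4$, $\la_2+\la_4$; Table~\ref{cubic.table}), just as you already dodged the analogous trap $\PGL_4<\wedge^2\SL_6$ for $A_3$.

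Finally, the $F_4$ (characteristic $3$) and $E_7$ (small characteristic) cases you acknowledge as unresolved are handled in the paper by switching to the replacement modules $L(\la_1+\la_4)$ for $F_4$ and $L(2\la_1)$ or $L(\la_2+\la_7)$ for $E_7$, with irreducibility confirmed against \cite{luebeck} (Table~\ref{cubic.table}). With these corrections your argument would line up with the paper's proof.
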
 

We postpone the proofs of these theorems until after the following examples, which will be used also in the proof of Theorem \ref{distinctinvariants}.

\begin{table}[hbt]
\begin{center}
\begin{tabular}{ccrr}
Group $G$&Highest weight $\la$&$\dim L(\la)$&$\car k$ \\ \hline
$A_2$&$2\la_1$&6&$\ne 2$\\ \hline
$C_4$&$2\la_2$&308&$\ne 2, 3, 5$ \\
&$2\la_4$&594&$\ne 2, 5, 7$ \\ 
&$\la_2 + \la_4$&792&$\ne 2, 3, 7$ \\ \hline
$E_7$&$2\la_1$&7371&$\ne 2, 5, 19$\\
&$\la_2 + \la_7$&40755&$\ne 2, 3, 7$\\ \hline
$E_8$&$\la_1$&3875&$\ne 2$ \\ \hline
$F_4$&$\la_4$&26&$\ne 3$ \\
&$\la_1 + \la_4$&1053&$\ne 2$ \\ \hline
$G_2$&$2\la_1$&27&$\ne 2, 7$ \\
&$2\la_2$&77&$\ne 2, 3$ \\ \hline
\end{tabular}
\caption{Examples of irreducible representations $L(\la)$ over a field $k$ such that $L(\la)$ has a nonzero $G$-invariant cubic form and there is no overgroup $H$ that stabilizes a cubic form and lies properly between $G$ and $\SL(L(\la))$.} \label{cubic.table}
\end{center}
\end{table}

\begin{eg}[trace zero matrices] \label{Lie.SL}
Consider the conjugation action of $\SL_n$ (equivalently, $\PGL_n$) on the space $M$ of $n$-by-$n$ matrices over an algebraically closed field $k$, for some $n \ge 2$.  Because the matrices with distinct eigenvalues are dense and the normalizer of the diagonal matrices equals the monomial matrices in $\SL_n$, the ring $k[M]^{\SL_n}$ equals the symmetric polynomials in $n$ variables.  That is, it is a polynomial ring with generators of degrees $1, 2, \ldots, n$, the coefficients of the characteristic polynomial.

Put $M_0$ for the Lie algebra of $\SL_n$, i.e., the trace zero subspace of $M$.
Tracking the proof of \cite[4.1]{Nakajima} shows that $k[M_0]^{\SL_n}$ is polynomial with generators the restrictions of the generators of $k[M]^{\SL_n}$ of degrees $2, 3, \ldots, n$.

Finally, suppose  that $\car k \mid n$ and put $\Mb_0$ for $M_0$ modulo the scalar matrices.  Then $k[\Mb_0]^{\SL_n}$ is identified with the subring of elements $f \in k[M_0]^{\SL_n}$ such that $f(t I_n + m) = f(m)$ for all $m \in M_0$ and $t \in k$.  It is easy to see that this ring has transcendence degree $n-2$.  

For later use, we note that in case $k$ has characteristic 2 and $n = 4$, one checks that the coefficients $c_2$, $c_3$ of degrees 2, 3 of the characteristic polynomial belong to $k[\Mb_0]^{\SL_4}$, but that $\det(tI_4 + m) = t^4 + (\text{terms of lower degree in $t$})$.  So $k[\Mb_0]^{\SL_4} = k[c_2, c_3]$.
\end{eg}

\begin{rmk}
The previous example noted that $k[\mathfrak{sl}_n]^{\PGL_n}$ is a polynomial ring. In contrast, $k[\pgl_n]^{\PGL_n}$ is isomorphic to the Weyl-group-invariant subspace of the symmetric algebra on the $A_{n-1}$ root lattice tensored with $k$ \cite[p.~199]{SpSt}.  That is, with the $S_n$-invariant subalgebra of the symmetric algebra on the obvious $(n-1)$-dimensional subspace of the $n$-dimensional permutation representation of $S_n$.  When $\car k$ divides $n$ and $n \ge 5$, this ring is not polynomial by \cite[5.2]{KemperMalle} or \cite[4.3]{Nakajima}, cf.~Problem II.3.18 in \cite{SpSt}.
\end{rmk}

\begin{eg}[self-adjoint endomorphisms]  \label{self.1}
Let $k$ be an algebraically closed field.  Fix a $2n$-dimensional $k$-vector space $W$ for some $n \ge 3$ and a nondegenerate alternating bilinear form $b$ on $W$; write $\Sp(W)$ for the isometry group of $b$.  Define $Y$ to be the space of \emph{self-adjoint} endomorphisms of $W$, i.e., the collection of endomorphisms $T$ so that $b(Tw, w) = 0$ for all $w \in W$.  Note that $\Sp(W)$ acts on $Y$ by conjugation, cf.~\cite[\S2]{GG}.

It is shown in \cite[Th.~2.7]{GG} that any self-adjoint operator leaves invariant a pair of totally singular
complementary spaces with respect to $b$.   With respect to an appropriate basis, 
it follows that a self-adjoint operator
corresponds to $\mathrm{diag}(A,A^{\top})$ and the stabilizer of a pair of such spaces in $\Sp(W)$ is $\GL_n$
which acts via conjugation.  Thus, the $\Sp(W)$-orbits in $Y$
can be identified with similarity classes of $n$-by-$n$ matrices, and $k[Y]^{\Sp(W)}$ is 
generated by the coefficients of the characteristic polynomial of a generic self-adjoint operator,
i.e., it is a polynomial ring generated in degrees $1, \ldots, n$.   A generic element will be one
in which the minimal polynomial of $A$ has degree $n$ and has distinct roots.  (Such an element
has stabilizer isomorphic to $\SL_{2}^{n}$.)
It follows that $k[Y]^{\Sp(W)}$ is a polynomial ring in generators of degrees $1, \ldots, n$.

Write $Y_0$ for the subspace of $Y$ on which the linear invariant vanishes.  The same proof shows that $k[Y_0]^{\Sp(W)}$ is isomorphic to the ring of $\PGL_n$-invariant functions on $M_0$ as in the preceding example.

Now let $X$ denote the space of alternating $2n$-by-$2n$ matrices, i.e., those matrices $L$ so that $L_{ij} = -L_{ji}$ and $L_{ii} = 0$ for all $i, j$.  Then $G$ acts on $X$ via $g \cdot L = gLg^\top$ and this representation is isomorphic to $Y$, via sending $L$ to $JL$ where $J$ is the alternating matrix defining $b$, cf.~\cite[\S2]{GG}.  We write $X_0$ for the submodule of $X$ corresponding to $Y_0$.

We now describe $k[V]^{\Sp(W)}$ for $V$ the irreducible representation with highest weight $\la_2$.
If $n$ is not divisible by $\car k$, then $X_0$ is $V$ and the claim follows from above.
If $n$ is divisible by $\car k$, then $V$ is isomorphic to $X_0/k$ 
 and the pullback $k[V]^{\Sp(W)} \hookrightarrow k[X_0]^{\Sp(W)}$ identifies $k[V]^{\Sp(W)}$ 
with the ring of $\PGL_n$-invariants on the space $\Mb_0$ from the previous example.
\end{eg}

\begin{proof}[Proof of Theorem \ref{cubic}]
By Lemma \ref{slem}, it suffices to produce, for $G$ of each of the types $A_2$, $A_n$ ($n \ge 4$), $C_n$ ($n \ge 3$), $E_7$, $E_8$, $F_4$, and $G_2$, a \emph{restricted} dominant weight $\la$ so that the corresponding irrep $L(\la)$ has a $G$-invariant cubic form and $\la$ does not appear in Seitz's Table 1.

For the groups $G$ and weights $\la$ listed in Table \ref{cubic.table}, the Weyl module of highest weight $\la$ is irreducible (with the restrictions on the characteristic in the table) by \cite{luebeck} and the corresponding irrep for a split complex Lie group has a $G$-invariant cubic form.  Therefore, the irrep $L(\la)$ of $G$ over $k$ also has a $G$-invariant cubic form and we are done with this case.

Example \ref{Lie.SL} gives a restricted irrep for type $A_n$ ($n \ge 4$, $p$ not dividing $n+1$)  with an invariant cubic form, and this irrep does not appear in Seitz's Table 1; this proves Theorem \ref{cubic} for those groups.  Similarly, Example \ref{self.1} handles type $C_n$ for $n = 3$ and $n \ge 5$
(for $p$ not dividing $n$). 
\end{proof} 

The theorem holds also in characteristic 2 for  many types   by the same proof.  However, the argument fails in particular for types $A_1$, $C_4$, and $G_2$.  For example, the only nontrivial restricted irrep of $A_1$ does not support any invariant nonconstant forms, so in this case one must consider irreps that are tensor decomposable.   

\begin{proof}[Proof of Theorem \ref{for:cubic0}]
In Theorem \ref{cubic}, the only places where the polynomial is quadratic is for types $B_n$ ($n \ge 1$) and $D_n$ ($n \ge 3$).  Such a group is isogenous to $\SO_r$ for $r = 3$ or $r \ge 5$.  By the hypothesis on the characteristic, the irrep $V$ with highest weight $2\la_2$ is the vector space of trace zero $r$-by-$r$ symmetric matrices where $\SO_r$ acts by conjugation.  The degree 3 coefficient of the characteristic polynomial is invariant under $\SO_r$ and $V$ does not appear in Seitz's Table 1, so Lemma \ref{slem} gives that $G$ is the identity component of the (naive) stabilizer and further that the scheme-theoretic stabilizer is smooth.
\end{proof}

\section{There are only finitely many overgroups} \label{finitely}

For the proof of the main result Theorem \ref{MT}, we need the statement that 
a semisimple irreducible subgroup of $\SL(V)$ is contained in only finitely many closed subgroups of $\SL(V)$.  We prove instead Proposition \ref{over}, which is much more general.  For $x, y$ in a group $G$, we write $^xy := xyx^{-1}$ and for $A \subseteq G$ we put $^Ay := \{ {^ay} \mid a \in A \}$ and $^xA := \{ {^xa} \mid a \in A \}$. 

\begin{lem}  \label{lem:fixed}  Let $\X < \Y < X$ be groups.
If $[N_X(\X):\X]$ is finite and the number of $\Y$-conjugacy classes
of subgroups $^x\X$, $x \in X$ which are contained in $\Y$ is finite, then
$\X$ has finitely many fixed points on $X/\Y$.
\end{lem}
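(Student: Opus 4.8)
The plan is to read off what ``fixed point'' means and then count. The group $X$ acts on $X/\Y$ by left translation, and a coset $x\Y$ is fixed by every element of $\X$ exactly when $x^{-1}\X x\subseteq\Y$, i.e.\ when ${}^{x^{-1}}\X\le\Y$. Write $F$ for the set of such cosets and $N:=N_X(\X)$; the goal is to bound $|F|$.

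Next I would feed in the second hypothesis. Among the conjugates ${}^x\X$ ($x\in X$) that lie in $\Y$ there are, by assumption, only finitely many $\Y$-conjugacy classes; fix representatives ${}^{x_1}\X,\dots,{}^{x_m}\X\le\Y$. If $x\Y\in F$, then ${}^{x^{-1}}\X$ is an $X$-conjugate of $\X$ contained in $\Y$, hence $\Y$-conjugate to some ${}^{x_i}\X$: there is $h\in\Y$ with ${}^{h}({}^{x^{-1}}\X)={}^{x_i}\X$, which rearranges to $x_i^{-1}hx^{-1}\in N$, and hence to $x\in N x_i^{-1}h\subseteq N x_i^{-1}\Y$. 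Thus every element of $F$ is a left $\Y$-coset contained in one of the finitely many double cosets $N x_i^{-1}\Y$.

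Finally I would count the left $\Y$-cosets inside a single $N x_i^{-1}\Y$. The map $n\mapsto n x_i^{-1}\Y$ from $N$ to $X/\Y$ has fibres the left cosets of $N\cap {}^{x_i^{-1}}\Y$ in $N$, so the number of such cosets is $[\,N:N\cap {}^{x_i^{-1}}\Y\,]$. The point that makes everything work is that ${}^{x_i}\X\le\Y$ forces $\X\le {}^{x_i^{-1}}\Y$, while trivially $\X\le N$, so $\X\le N\cap {}^{x_i^{-1}}\Y$ and this index is at most $[N:\X]$, which is finite by the first hypothesis. Summing over $i=1,\dots,m$ gives $|F|\le m\,[N_X(\X):\X]<\infty$, as desired.

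I do not expect a genuine obstacle: the argument is elementary group bookkeeping. The only thing that needs care is keeping the directions of conjugation and the left/right cosets consistent, and the single substantive observation is that $\X$ sits inside $N\cap {}^{x_i^{-1}}\Y$, which is exactly what bounds the relevant index by $[N_X(\X):\X]$.
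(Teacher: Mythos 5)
Your proof is correct and follows essentially the same strategy as the paper: reformulate the fixed-point condition as a conjugate of $\X$ lying in $\Y$, use the finitely many $\Y$-conjugacy representatives to place each fixed coset in one of the double cosets $N_X(\X)x_i^{-1}\Y$, and then invoke $[N_X(\X):\X]<\infty$ to bound the count. The only cosmetic difference is in the last step, where you directly count left $\Y$-cosets in $N_X(\X)x_i^{-1}\Y$ via $\X\le N_X(\X)\cap {}^{x_i^{-1}}\Y$, whereas the paper observes that $N_X(\X)x_i^{-1}\Y$ is a finite union of $\X\backslash X/\Y$ double cosets; both give the same bound $m\,[N_X(\X):\X]$.
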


\begin{proof}  Suppose that $\X x^{-1} \Y=x^{-1} \Y$, equivalently
 $^x\X < \Y$.  Let $\X_i= {^{x_i}\X}, 1 \le i \le m$ be representatives for the
$\Y$-conjugacy classes of $\{{^x\X} \mid {^x\X} < \Y\}$.   So  $^x\X = {^{h_ix_i}\X}$
for some $i$ and some $h_i \in \Y$.   Thus $y:= x_i^{-1} h_i^{-1} x$ is in in $N_X(\X)$.  

Further, $y\X x^{-1}\Y=\X yx^{-1}\Y=\X x_i^{-1}\Y$ and so $N_X(\X)x^{-1}\Y=N_X(\X)x_i^{-1}\Y$.
Of course $N_X(\X)x_i^{-1}\Y$ is a finite union of  $\X\backslash X /\Y$ double cosets,
whence the result.
\end{proof}

\begin{prop}  \label{over}
Let $X$ be a semisimple algebraic group over an algebraically closed field $k$.  If $G$ is a closed, connected
subgroup of $X$ not contained in a proper parabolic subgroup of $X$, then
$G$ is semsimple, $C_X(G)$ is finite, $[N_X(G):G]$ is finite, and there are only finitely many closed subgroups $H$ of $X$ with $G \le H \le X$.
\end{prop}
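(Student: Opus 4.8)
The plan is to prove the four assertions in turn: the first three by one uniform argument, and the last by induction on $\dim X$. The two external inputs are the Borel--Tits theorem --- a closed connected subgroup of $X$ normalizing a nontrivial unipotent subgroup is contained in a proper parabolic --- and the fact, valid over any algebraically closed field, that a semisimple algebraic group has only finitely many conjugacy classes of maximal proper closed connected subgroups (for exceptional simple factors this is the Liebeck--Seitz classification; for classical factors it is classical, and the general semisimple case reduces to the simple factors).

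For the first three assertions I observe that the argument works for any closed connected subgroup of $X$ not contained in a proper parabolic, and I will reuse it in that generality. Since $G$ normalizes $R_u(G)$, Borel--Tits forces $R_u(G) = 1$, so $G$ is reductive; and if $Z(G)^{\circ}$ were a nontrivial torus, then $G$ would be contained in the proper Levi subgroup $C_X(Z(G)^{\circ})$ (here $X$ semisimple makes this Levi proper), hence in a proper parabolic --- so $Z(G)^{\circ} = 1$ and $G$ is semisimple. Applying the same two observations to $C_X(G)^{\circ}$, which $G$ normalizes and which would contain a nontrivial torus or unipotent subgroup were it positive-dimensional, shows $C_X(G)$ is finite. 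Finally the conjugation map $N_X(G)^{\circ} \to \operatorname{Aut}(G)$ has connected, hence inner, image (as $\operatorname{Out}(G)$ is finite for $G$ semisimple) and kernel contained in $C_X(G)$; since $G$ already surjects onto $\operatorname{Inn}(G)$, this gives $N_X(G)^{\circ} = G\cdot(N_X(G)^{\circ}\cap C_X(G))$, of finite index over $G$, and $[N_X(G):N_X(G)^{\circ}]$ is finite.

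For the last assertion I first reduce to connected overgroups: if $G \le H \le X$ then $G \le H^{\circ}$, so $H^{\circ}$ is not in a proper parabolic and $[N_X(H^{\circ}):H^{\circ}]$ is finite by the previous paragraph, while $H \le N_X(H^{\circ})$; thus it suffices to bound the closed connected subgroups $G \le H \le X$, which are all semisimple. I then prove by induction on $\dim X$ the conjunction of (a) there are finitely many closed connected subgroups $G \le H \le X$, and the auxiliary (b) $X$ has finitely many conjugacy classes of closed connected subgroups not contained in a proper parabolic. Given a connected $H$ with $G \le H \subsetneq X$, choose a maximal proper closed connected subgroup $M \supseteq H$; then $G \le M$, so $M$ is semisimple exactly as above, and the Borel--Tits argument inside $M$ --- a proper parabolic $Q$ of $M$ has $R_u(Q) \ne 1$, a nontrivial unipotent subgroup of $X$ that $G$ would normalize --- shows $G$ is not contained in a proper parabolic of $M$. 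By induction applied to $M$ (of dimension $< \dim X$), only finitely many closed connected subgroups of $M$ contain $G$, so it remains to bound the possible $M$. Since there are finitely many conjugacy classes of maximal proper closed connected subgroups, I fix one $M$ and bound its $X$-conjugates containing $G$ by applying Lemma~\ref{lem:fixed} to $G < M < X$: the index $[N_X(G):G]$ is finite, and the $X$-conjugates of $G$ lying in $M$ are closed connected subgroups of $M$ not contained in a proper parabolic of $M$, so by part (b) of the inductive hypothesis for $M$ they fall into finitely many $M$-conjugacy classes; the lemma then yields finitely many fixed points of $G$ on $X/M$, and, as $[N_X(M):M]$ is finite, only finitely many conjugates of $M$ contain $G$.

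This proves (a); part (b) follows by the same reasoning, since any closed connected subgroup of $X$ other than $X$ and not contained in a proper parabolic lies in a maximal proper closed connected --- hence semisimple --- subgroup, which is $X$-conjugate to one of finitely many representatives $M$, inside which it occupies one of finitely many $M$-conjugacy classes by induction. The step I expect to be the main obstacle is closing this induction cleanly: the auxiliary statement (b) must be threaded through so as to supply exactly the hypothesis of Lemma~\ref{lem:fixed} (that the conjugates of $G$ contained in a fixed maximal $M$ form finitely many $M$-classes), and one must correctly relate ``not contained in a proper parabolic of $X$'' with the corresponding condition inside $M$ --- the Borel--Tits theorem is precisely what makes this descent work uniformly, with no restriction on $\car k$.
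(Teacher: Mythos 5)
Your treatment of the first three assertions (semisimplicity of $G$, finiteness of $C_X(G)$ and of $[N_X(G):G]$) via Borel--Tits is essentially identical to the paper's, and the reduction of the last assertion to connected overgroups and the use of Lemma~\ref{lem:fixed} match as well. The gap is in the two inputs you feed into that lemma. Your ``external input'' --- that a semisimple algebraic group over \emph{any} algebraically closed field has finitely many conjugacy classes of maximal proper closed connected subgroups, and in particular that the semisimple case reduces to the simple factors --- is false in prime characteristic. In $X = \SL_2 \times \SL_2$ over $\overline{\FF}_p$, the diagonal subgroups $\{(g, F^n g)\}$ with $F$ the Frobenius endomorphism are, for distinct $n\ge 0$, pairwise non-conjugate maximal proper closed connected subgroups, none contained in a proper parabolic; so there are infinitely many conjugacy classes of maximal subgroups. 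The same example kills your auxiliary statement~(b): $X$ does \emph{not} have finitely many conjugacy classes of closed connected subgroups not contained in a proper parabolic, so the inductive hypothesis you invoke inside $M$ to supply Lemma~\ref{lem:fixed}'s second hypothesis fails precisely in the product case.

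The paper proves instead the much weaker (and correct) claim~\eqref{over.claim}: for the \emph{fixed} $G$ and any connected overgroup $H$, the set of $X$-conjugates of $G$ contained in $H$ is a finite union of $H$-conjugacy classes. This does not follow from any count of subgroups of $H$; its proof uses a genuinely geometric input --- the finiteness of the morphism $H^2/H \to X^2/X$ of GIT quotients (Martin), combined with the Bate--Martin--R\"ohrle result that $H$-completely-reducible pairs in the same fiber of $H^2 \to H^2/H$ are $H$-conjugate, applied to a pair $(g_1,g_2)$ topologically generating $G$, together with a specialization argument when $k = \overline{\FF}_p$. That claim replaces your~(b). Separately, the paper must still cope with the infinitely many diagonal maximal subgroups of a non-simple $X$: it does so not by counting them but by observing that the intersection of any sufficiently large collection of non-conjugate diagonals is finite, so the fixed $G$ can lie in only finitely many of them. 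Both of these steps are missing from your argument, and neither can be replaced by the inductive count you propose.
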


\begin{proof}
As $G$ is not contained in a proper parabolic subgroup of $X$, it cannot normalize a nontrivial unipotent subgroup, nor centralize a nontrivial torus in $X$ \cite[Th.~4.15a]{BoTi}.  This implies that $G$ is  semisimple and that the identity component of $C_X(G)$ is trivial, hence $C_X(G)$ is finite.
Since the outer automorphism group of a semisimple group is finite, it follows that $[N_X(G):G]$ is finite.

We next show 
that for any connected, closed subgroup $H$ such that $G \le H \le X$, the following holds
(see also \cite[Lemma 11]{martin:new}):
\begin{equation} \label{over.claim}
\text{The set $\{ {^xG} \mid {^xG} \le H \}$ is a finite union of $H$-conjugacy classes.}
\end{equation}

First assume that $k$ is not the algebraic closure of a finite field.
Then we can choose $g_1, g_2 \in \X$ so that $\X$ is the Zariski closure of
$\langle g_1, g_2 \rangle$.
Let $f:\Y^2/\Y \to X^2/X$ be the morphism of varieties induced by the inclusion of $\Y^2$
  in $X^2$, where  $\Y, X$ act on $\Y^2, X^2$ by simultaneous conjugation.
  By \cite[Th.~1.1]{martin}, $f$ is a finite morphism. Let $p_\Y:\Y^2  \rightarrow \Y^2/\Y$ and $p_X:X^2
  \rightarrow X^2/X$   be the canonical projections.  If two elements in $\Y^2$ generate $\Y$-cr subgroups and
  both belong to the same fiber of $p_\Y$, then they are $\Y$-conjugate by \cite[Th.~3.1]{BMR} or see \cite[Th.~6.4]{Richardson:conj}.

If  $x\in X$ and $^x\X \le \Y$, then since $\X$ is not contained in a parabolic subgroup of
$X$, $^x\X$ cannot be contained in a parabolic subgroup of $\Y$.  In particular,
$^x\X$  is an $\Y$-cr subgroup of $\Y$ and   $p_X(^xg_1, {^xg_2}) = p_X(g_1,g_2)$.
Thus, $p_\Y({^xg_1}, {^xg_2})$ lies in the fiber of $f$ above $p_X(g_1,g_2)$.
Since $f$ is a finite map,  this fiber is finite.   So by the previous paragraph,
$({^xg_1},{^xg_2})$ lies in one of finitely many  $\Y$-conjugacy classes.

If $k$ is the algebraic closure of a finite field, let $k'$ be a bigger algebraically closed
field containing $k$.     If $\X_1(k), \X_2(k)$ are closed subgroups of $\Y(k)$, let
$\X_i(k')$ be the Zariski closures of $\X_i(k)$ in $\Y(k')$.
Then $\{g \in \Y(k') \mid {^g\X_1(k')} = \X_2(k') \}$ is a variety defined over $k$.  So
if there are $k'$-points, there are $k$-points.  So if the $\X_i(k')$ are conjugate in $\Y(k')$,
then the $\X_i(k)$ are conjugate in $\Y(k)$, completing the verification of \eqref{over.claim}.

By Lemma \ref{lem:fixed}, it follows that  the closed overgroups of $G$ in $X$
contain only finitely many subgroups in a given $X$-conjugacy class of subgroups.  

\smallskip
We now prove that there are only finitely many closed subgroups $H$ lying between $G$ and $X$.  By the first paragraph of the proof, any such $H$ is  semisimple. By induction on the codimension of $G$ in $X$, we are reduced to proving that $G$ is contained in only finitely many semisimple maximal subgroups of $X$.  Further, \eqref{over.claim} and the fact that $[N_X(G):G]$ is finite reduce us to showing 
there are only finitely many conjugacy classes of maximal
closed subgroups containing $\X$.  If $X$ is simple, then in fact
$X$ only has finitely many conjugacy classes of maximal closed subgroups, by representation theory in the case of classical groups and by \cite{lieseitz} for
exceptional groups.   

Suppose that $X = X_1 \times X_2 \times \cdots \times X_m$
with $m > 1$ and the $X_i$ are simple.  There are only finitely many conjugacy
classes of  maximal subgroups  of the form
$Y_1 \times \cdots \times Y_m$ for $Y_j=X_j$ for all $j \ne i$ and
$Y_i$ is maximal in $X_i$ and so we can ignore these.

The other possible maximal closed subgroups of $X$ are ``diagonal'', i.e., up to re-ordering 
of the factors in $X$ are 
of the form $Y \times X_3 \times \cdots \times  X_m$ where there is a 
bijective morphism $\phi \!: X_1 \to X_2$ and $Y$ is the image of $X_1$ under $\Id_{X_1} \times\, \phi$.
There are countably many conjugacy classes of such
subgroups (essentially corresponding to Frobenius morphisms and outer automorphisms).
It is straightforward to see that the intersection of any infinite collection of nonconjugate  diagonal
subgroups of $X_1 \times X_2$ is a finite group. (Indeed as long as we take more
than $\Out(X_1)$ such subgroups, the intersection is finite.)   
Thus, $G$ cannot be contained in infinitely many
nonconjugate maximal closed subgroups of $X$ for then up to reordering $G$
would be contained in $A \times X_3 \times \ldots \times X_m$ with $A$ finite
and so in $X_3 \cdots \times \cdots \times X_m$, a contradiction.
\end{proof}

\section{Generic stabilizers}

The purpose of this section is to prove the following theorem.

\begin{thm} \label{generic}  
Let $G$ be a closed, simple, and irreducible subgroup of $\SL(V)$ over an algebraically closed field $k$.
  If $\dim V >  \dim G$  (in particular if $V$ is tensor decomposable), then for a generic $v \in V$, the identity
 component of $G_v$ is unipotent.
\end{thm}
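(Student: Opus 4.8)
The plan is to show that the ``bad'' set $\Sigma := \{ v \in V : (G_v)^\circ \text{ is not unipotent} \}$ is not dense in $V$, which suffices. If the generic stabilizer is already finite there is nothing to prove (cf.\ Lemma~\ref{stab.const}), so the content is the case $\dim G_v > 0$ generically. I will use the standard fact that a connected linear algebraic group is unipotent exactly when it contains no nontrivial torus; hence $\Sigma$ is precisely the locus of $v$ whose stabilizer $G_v$ contains a copy of $\GL_1$. (For the scheme-theoretic version in characteristic $p$ one must in addition show that no $\mu_p$ lies in $G_v$ for generic $v$; this follows from the same argument run with ``nonzero toral element $X$ of $\g := \Lie G$'' in place of ``nontrivial cocharacter'', using that such an $X$ lies in a Cartan subalgebra $\t$, that $\{v : Xv = 0\}$ is then a sum of $\t$-weight spaces, and that $Z_G(X)$ is a Levi subgroup preserving it — exactly as below.)

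Fix a maximal torus $T \le G$ and write $V = \bigoplus_\chi V_\chi$ for the weight decomposition under $T$. Since every nontrivial subtorus of $G$ is conjugate into $T$ and all maximal tori are conjugate, $\Sigma = G \cdot \Sigma_T$, where $\Sigma_T = \bigcup_\mu V^\mu$, the union over nonzero cocharacters $\mu$ of $T$, with $V^\mu := \bigoplus_{\langle \chi, \mu \rangle = 0} V_\chi$ the fixed space of $\mu(\GL_1)$. Because $V^\mu$ depends only on which weights of $V$ are annihilated by $\mu$, this is a \emph{finite} union, and each $V^\mu$ is a \emph{proper} subspace since $G \hookrightarrow \GL(V)$ is faithful. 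The Levi subgroup $L_\mu := Z_G(\mu(\GL_1))$ preserves $V^\mu$, so the multiplication map $G \times V^\mu \to V$ is constant on the orbits of the free $L_\mu$-action $h \cdot (g,w) = (gh^{-1}, hw)$, giving
\[
\dim(G \cdot V^\mu) \;\le\; \dim G - \dim L_\mu + \dim V^\mu .
\]
As $\Sigma$ is a finite union of the sets $G \cdot V^\mu$, it will be non-dense once I prove, for every nonzero cocharacter $\mu$,
\begin{equation}\label{plan.key}
\dim G - \dim L_\mu + \dim V^\mu \;<\; \dim V .
\end{equation}
In grading language (set $V = \bigoplus_n V(n)$ and $\g = \bigoplus_n \g(n)$ for the gradings induced by $\mu$, so $V^\mu = V(0)$ and $\Lie L_\mu = \g(0)$) this says $\sum_{n \ne 0} \dim V(n) > \sum_{n \ne 0} \dim \g(n)$.

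The hard part is \eqref{plan.key}. I would prove it by a finite case analysis according to the facet of $X_*(T) \ot \R$ containing $\mu$ — equivalently, the conjugacy class of the Levi $L_\mu$. Replacing $\mu$ by a less special cocharacter shrinks $V^\mu$ but enlarges $G/L_\mu$, so the extreme facets are the delicate ones: when $L_\mu = T$, \eqref{plan.key} amounts to the statement that the zero weight space of the irreducible module $V$ has dimension at most $\dim V - \dim G + \rank G$; when $L_\mu$ is a maximal Levi, I would fix a simple root $\alpha$ in the nilradical attached to $\mu$ and exploit the structure of $V$ as a module for the corresponding $\SL_2$, together with the irreducibility of $V$ for $G$, to bound $\dim V(0)$ against $\sum_{n\ne 0}\dim V(n)$; and for the intermediate facets I would estimate $\dim V(0)$ directly from the weights, interpolating between these extremes. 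In every case the hypothesis $\dim V > \dim G$ — automatic when $V$ is tensor decomposable — is exactly what supplies the gap of at least $1$ between the two sides of \eqref{plan.key}. I expect the argument to be uniform in the characteristic, since conjugacy of maximal tori, the weight decomposition, and the Levi subgroups $Z_G(\mu(\GL_1))$ are available over any algebraically closed field; this is what upgrades the known characteristic-zero result to prime characteristic.
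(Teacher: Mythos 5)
Your overall strategy—show the locus $\Sigma$ where $(G_v)^\circ$ contains a torus is contained in a proper closed subset by bounding $\dim G\cdot V^\mu$ facet by facet—is sound and is essentially a uniform refinement of what the paper does: the paper's Lemma~\ref{lem:basic1} is exactly your estimate $\dim V(x)\le \dim {}^Gx + \dim V^x$, phrased for a single group element $x$ rather than for a one-parameter subgroup. But your proposal stops precisely where the proof begins. The inequality you label \eqref{plan.key},
\[
\sum_{n\ne 0}\dim V(n) \;>\; \sum_{n\ne 0}\dim\g(n),
\]
for every nonzero cocharacter $\mu$, is the entire content of the theorem, and it is not a formal consequence of $\dim V>\dim G$. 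Notice that for the adjoint module $V=\g$ the two sides are \emph{equal} for every $\mu$; so the inequality is tight at the boundary $\dim V=\dim G$, and for representations only slightly above that boundary (which is where all the hard cases live: $\wedge^4$ of $\SL_8$, $\wedge^3$ of $\SL_9$, the half-spin of $D_8$, $L(\lambda_3)$ and $L(\lambda_4)$ of $C_4$, $L(\omega_1+\omega_2)$ of $B_2$ in characteristic $5$, etc.) there is no reason at all that ``adding one to $\dim V$'' should produce the uniform strict gap you want for every facet of $X_*(T)\otimes\R$ simultaneously. Your sentence ``in every case the hypothesis $\dim V>\dim G$ is exactly what supplies the gap'' is an assertion, not an argument, and I do not see a proof of it in the literature.

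Two further points of contrast with the paper's proof that matter for feasibility. First, the paper does not try to kill \emph{all} tori at once: Lemma~\ref{lem:basic3}\eqref{Vx.ss} reduces the problem to ruling out elements of a \emph{single} prime order $r\ne p$, which is a strictly weaker inequality than your all-$\mu$ statement (a torus must contain an order-$r$ element, so killing order $r$ already forces the identity component to be unipotent). Second, even that weaker inequality is not proved case by case from scratch: the proof quotes Kenneally's thesis to dispose of everything with $\dim V>\dim G+2$ except a short explicit list, invokes Lemma~\ref{twisted} and Example~\ref{lem:so(n)} for the tensor-decomposable and $S^2\SO_n$ cases, uses Vinberg/Levy $\theta$-group slices to get finiteness in good characteristic for cases (a)--(d), and then transfers to arbitrary characteristic via the Chevalley-$\Z$-form argument of Lemma~\ref{lem:basic2}. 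That transfer is available exactly because one only needs to rule out a single semisimple conjugacy class, whose reduction mod $p$ is controlled; there is no analogous clean way to reduce your ``all cocharacters'' claim from characteristic $0$ to characteristic $p$. Your claim that the argument is ``uniform in the characteristic'' is appealing, but it forgoes the one technique (Lemma~\ref{lem:basic2}) that actually makes the positive-characteristic cases tractable. As it stands, the proposal identifies the right bound to prove but leaves it unproved; and the route you sketch for proving it is both more demanding than necessary and missing the reduction tools that the paper relies on.
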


In characteristic $0$, irreducible implies tensor indecomposable, so the hypothesis is that $\dim V > \dim G$, and it is already known that  $G_v$ is finite (i.e., the identity component is trivial) and 
with a small number of exceptions, $G_v$ is itself trivial, see \cite{AVE} or \cite[Th.~7.11]{PoV}. This seems likely
to be true in positive characteristic as well and will be the subject of future work.
For our applications, Theorem \ref{generic} more than suffices.   Note that if $V$ is tensor decomposable,
then since the minimal dimension $d$ of a representation of $G$ satisfies $d^2 > \dim G$, $\dim V > \dim G$.  Note also that if
$\dim G > \dim V$, then $ \dim G_v \ge \dim G - \dim V > 0$ and almost always
$G_v$ will contain a torus for generic $v$. (In characteristic $0$, 
only the natural representation of $\SL_2$ has generic stabilizer with an identity component that is nontrivial unipotent.) 

We give the proof at the end of the section. A key part of the proof is the main result of \cite{Kenneally}  that in most cases, if $\dim V > \dim G +2$, then
the identity component of $G_v$ contains only unipotent elements.  

For $G$ an algebraic group acting on a variety $V$, let $V^x$ be the fixed space of $x$ on 
$V$ and $V(x):=\{v \in V \mid {^gx}\cdot v=v \ \text{for some} \ g \in G\}$.

\begin{lem} \label{lem:basic3}  Let 
$G$ be a reductive algebraic group acting on an irreducible variety $V$ over an algebraically closed field $k$.
\begin{enumerate}
\item \label{Vx.back} If $\dim G_v = 0$ for some $v \in V$,  then for all sufficiently large primes $r$, $\dim V(x) < \dim V$ for every $x \in G$ of order $r$.
\item \label{Vx.all} If $\dim V(x) < \dim V$ for all nonidentity $x \in G$, then for $v$ in a nonempty open 
subvariety of $V$, $G_v$ is trivial.
\item \label{Vx.ss} If  $r$ is prime, $r \ne \car k$,
and $\dim V(x) < \dim V$ for all $x$ of order $r$, then for 
$v$ in a nonempty open subvariety, the identity component
of $G_v$ is unipotent.
\end{enumerate} 
\end{lem}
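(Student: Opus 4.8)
The plan is to handle all three parts with a single observation: for a fixed prime $r$, the elements of order $r$ in $G$ fall into only finitely many $G$-conjugacy classes. Indeed, if $r\neq\car k$ then such elements are semisimple, hence conjugate into a fixed maximal torus $T$ and so into the finite set $T[r]$, on which $G$-conjugacy agrees with $N_G(T)$-conjugacy; if $r=\car k$ they are unipotent, and $G$ has only finitely many unipotent classes. Because $V({}^{g}x)=V(x)$ for all $g$, it follows that the set $\{v\in V : G_v(k)\text{ contains an element of order }r\}$ is a \emph{finite} union $\bigcup_{i=1}^{m}V(x_i)$, each $x_i$ of order $r$; so whenever a hypothesis forces every such $V(x_i)$ to have dimension $<\dim V$, this entire set has dimension $<\dim V$ and its complement is a dense open subvariety of the irreducible variety $V$.

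For \eqref{Vx.back} I would apply Lemma~\ref{stab.const} to get a dense open $U\subseteq V$ on which $|G_v(k)|$ equals a constant $N$; then for any prime $r>N$ and any $x\in G$ of order $r$ we have $V(x)\cap U=\emptyset$, since a point of this intersection would place a conjugate of $x$ --- an element of order $r$ --- inside a group $G_v(k)$ of order $N<r$. Hence $V(x)\subseteq V\setminus U$, which has dimension $<\dim V$, and \eqref{Vx.back} holds with ``sufficiently large'' meaning ``exceeding $N$''. For \eqref{Vx.ss} the opening observation applies directly: the set of $v$ for which $G_v(k)$ has an element of order $r$ is $\bigcup_{i}V(x_i)$ with each $x_i\neq 1$, so it has dimension $<\dim V$; on its complementary dense open subvariety $U$ the group $G_v(k)$ has no element of order $r$, so --- since $r\neq\car k$, and since a nontrivial torus in the smooth connected group $(G_v)^{\circ}_{\mathrm{red}}$ would supply such an element --- $(G_v)^{\circ}_{\mathrm{red}}$ contains no nontrivial torus and is therefore unipotent.

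For \eqref{Vx.all} the first step is to see that $\dim G_v=0$ for generic $v$. Otherwise $\{v:\dim G_v\geq 1\}$ is dense, hence all of $V$ by upper semicontinuity of fiber dimension, so for \emph{every} $v$ the positive-dimensional smooth connected group $(G_v)^{\circ}_{\mathrm{red}}$ contains either a nontrivial torus or a copy of $\mathbb{G}_a$. In the torus case $G_v(k)$ has an element of a fixed order $r\neq\car k$; in the $\mathbb{G}_a$ case it has a nontrivial unipotent element, of order $\car k$ when $\car k>0$ and of the form $\exp(N)$ for a nonzero nilpotent $N$ when $\car k=0$. In all cases that element lies in one of finitely many $G$-conjugacy classes --- using finiteness of the unipotent classes, respectively finiteness of the nonzero nilpotent orbits together with the $G$-equivariance of $\exp$ --- so $v\in\bigcup_i V(x_i)$ for a \emph{fixed} finite list with all $x_i\neq 1$, whence $V=\bigcup_i V(x_i)$ has dimension $<\dim V$: absurd. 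So $\dim G_v=0$ on a dense open $U$, and Lemma~\ref{stab.const} makes $|G_v(k)|$ equal to a constant $N$ on $U$. If $N>1$, pick a prime $r\mid N$; by Cauchy's theorem every $G_v(k)$ with $v\in U$ has an element of order $r$, so $U\subseteq\bigcup_i V(x_i)$ with all $x_i\neq 1$, a set of dimension $<\dim V$ --- impossible. Hence $N=1$ and $G_v$ is trivial for $v\in U$.

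The step I expect to be the main obstacle is the reduction, inside \eqref{Vx.all}, to the case $\dim G_v=0$: one must confirm uniformly in the characteristic that a positive-dimensional connected stabilizer always contributes a nonidentity element of $G$ confined to finitely many conjugacy classes. The torus case is immediate, but the characteristic-zero unipotent case rests on there being only finitely many nonzero nilpotent orbits and on $\exp$ being $G$-equivariant. Some minor scheme-theoretic care is also needed wherever stabilizers may fail to be smooth, which is why the conclusions are phrased in terms of $(G_v)^{\circ}_{\mathrm{red}}$ and $G_v(k)$.
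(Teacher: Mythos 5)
Your proof is correct and follows essentially the same route as the paper: both hinge on the observation that $V(x)$ depends only on the conjugacy class of $x$, on the finiteness of the relevant conjugacy classes (unipotent classes, or prime-order semisimple classes), and on Lemma~\ref{stab.const}. The only cosmetic differences are in part~\eqref{Vx.all}, where you invoke Cauchy's theorem with a prime $r\mid N$ to show $N=1$, whereas the paper takes $X$ to be the full set of elements of order dividing $n$; and in part~\eqref{Vx.back} you use $r>N$ where the paper uses $r\nmid n$ — both are valid readings of ``sufficiently large.'' (One small imprecision: in characteristic $p>0$ a nontrivial unipotent element may have order $p^k$ rather than $p$, but since all unipotent elements lie in finitely many conjugacy classes this does not affect your argument.)
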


\begin{proof}   
\eqref{Vx.back}: By Lemma \ref{stab.const} there is a nonempty open subvariety $U$ of $V$ and a positive integer $n$ such that $| G_v(k) | = n$ for all $v \in U$.  For every $v \in U$ and prime $r$ not dividing $n$, the conjugacy class of $x$ does not meet $G_v$, so $\dim V(x) < \dim V$, proving \eqref{Vx.back}.

Now let $X \subset G$ be the union of finitely many conjugacy classes and satisfy $\dim V(x) < \dim V$ for all $x \in X$.  Then as $V$ is irreducible, the finite union $\cup_{x \in X} V(x)$ is contained in a proper closed subvariety $Z$ of $V$, and for every $v$ in the nonempty open set $V \setminus Z$, the stabilizer $G_v$ does not meet $X$.

Suppose that $\dim V(x) < \dim V$ for all nonidentity $x \in G$
and take $X$ to be the union of the nonidentity unipotent elements in $G$ and the elements of order $r$, for some prime $r$ not equal to $\car k$.  As $G$ is reductive, $X$ consists of a finite number of conjugacy classes, and the previous paragraph gives that $G_v$ is finite for generic $v$.  Let $n$ be such that $|G_v| = n$ for $v$ in a nonemepty open subvariety of $V$.  Repeating the argument of the previous paragraph with $X$ the set of elements of $G$ whose order divides $n$ completes the proof of \eqref{Vx.all}.

Taking $X$ to be the set of elements of $G$ of order $r$ gives \eqref{Vx.ss}.
\end{proof}

Next we note how to pass from characteristic $0$ to positive characteristic
for semisimple elements.  One can obviously generalize the result but
we just state it in the form we need.
Fix a simple Chevalley group $G$ over $\Z$ and a representation of $G$ on $V := \Z^n$ for some $n$.  Fix algebraically closed fields $K$ and $k$ so that $\car K = 0$ and $\car k = p > 0$.

\begin{lem} \label{lem:basic2}  
Maintain the notation of the previous paragraph.
If $G(K)_v$ is finite for generic $v \in V \ot K$, then the identity component of $G(k)_v$ is unipotent for generic $v \in V \ot k$.
\end{lem}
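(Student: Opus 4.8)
The plan is to deduce the statement from Lemma~\ref{lem:basic3}. Working over $K$, the hypothesis that $G(K)_v$ is finite for generic $v$ says in particular that $\dim G(K)_v = 0$ for some $v \in V\ot K$, so Lemma~\ref{lem:basic3}\eqref{Vx.back} produces a prime $r$ — indeed every sufficiently large prime works, so we may take $r \ne p$ — with
\[
\dim V(x) < \dim V \qquad \text{for every $x \in G(K)$ of order $r$.}
\]
By Lemma~\ref{lem:basic3}\eqref{Vx.ss} (which applies since $r$ is prime and $r \ne p = \car k$), it then suffices to establish the same inequality over $k$: that $\dim V(x) < \dim V$ for every $x \in G(k)$ of order $r$. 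So the whole content is the passage of this single inequality from characteristic $0$ to characteristic $p$.

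For the transfer I would use the following setup. Since $r$ is prime to both characteristics, any element of order $r$ is semisimple, hence conjugate into the $r$-torsion $T[r]$ of the fixed split maximal $\Z$-torus $T \le G$, and $T[r] = \operatorname{Hom}(X^*(T), \mu_r)$ is literally the same finite group over any base in which $r$ is invertible. Fix a complete discrete valuation ring $R$ with residue field $k$ and with $\Frac R$ of characteristic $0$ (Cohen's structure theorem provides one); then $r \in R^\times$, the reduction map $T(R)[r]\to T(k)[r]$ is a bijection, and because $V(x)$ is unchanged if $x$ is replaced by a conjugate, it is enough to bound $\dim V(\bar x)$ over $k$ as $\bar x$ runs over the reductions of the finitely many $x \in T(R)[r]\setminus\{1\}$ (each of order exactly $r$, as $r$ is prime). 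Fix such an $x$. On each $T$-weight space of $V\ot R$ the element $x$ acts by a root of unity, which is either $1$ or — since reduction is injective on $\mu_r(R)$ — a unit distinct from $1$; hence $x-1$ has locally constant rank, the fixed subscheme $V_R^x := (V\ot R)^x \subset \mathbb{A}^n_R$ is a subbundle, and its formation commutes with base change. Let $\mathcal{W}\subseteq\mathbb{A}^n_R$ be the closure, with its reduced structure, of the image of the action morphism $G_R\times_R V_R^x \to \mathbb{A}^n_R$.

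Now $\mathcal{W}$ is integral, being the closure of the image of the irreducible scheme $G_R\times_R V_R^x$, and it dominates $\operatorname{Spec}R$; therefore $\Gamma(\mathcal{W},\mathcal{O})$ is an $R$-torsion-free domain, so $\mathcal{W}$ is flat over the discrete valuation ring $R$. For a flat morphism of finite type with irreducible source the fibre dimension is constant, so $\dim\mathcal{W}_k = \dim\mathcal{W}_{\Frac R}$. But $\mathcal{W}_{\Frac R}$ is the closure of $V(x)$ computed over $\Frac R$, which has dimension $<\dim V$ by the characteristic-$0$ inequality above (the value of $\dim V(x)$ does not change under extension of the base field from $K$ to a common overfield of $K$ and $\Frac R$, and over such a field every order-$r$ element is conjugate into $T[r]$). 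Since $V(\bar x)\subseteq\mathcal{W}_k$, we get $\dim V(\bar x) < \dim V$, which is exactly what is required; Lemma~\ref{lem:basic3}\eqref{Vx.ss} then finishes the proof.

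The delicate point is the equality $\dim\mathcal{W}_k = \dim\mathcal{W}_{\Frac R}$: one cannot simply invoke upper semicontinuity of fibre dimension, which bounds $\dim\mathcal{W}_k$ only from below by $\dim\mathcal{W}_{\Frac R}$ and so a priori permits the locus of ``bad'' $v$ to swell in characteristic $p$. The way around this is to present the bad locus not as a fibre of an arbitrary morphism but as the special fibre of the family $\mathcal{W}\to\operatorname{Spec}R$, which is automatically flat here because $\mathcal{W}$ is integral and dominates the one-dimensional base; flatness then forces the fibres to be equidimensional. The remaining ingredients — semisimplicity of elements of order $r$ prime to the characteristic, the characteristic-independence of $T[r]$, and invariance of $\dim V(x)$ under field extension and under conjugation — are routine and I would treat them briefly.
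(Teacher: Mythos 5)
Your argument is correct and follows the same overall strategy as the paper's proof --- both reduce to transferring the inequality $\dim V(x) < \dim V$ for elements $x$ of order $r$ from characteristic $0$ to characteristic $p$ by lifting $x$ and the action morphism $\alpha\colon G\times V^x \to V$ to a ring of mixed characteristic, and then invoke Lemma~\ref{lem:basic3}\eqref{Vx.ss} --- but the mechanism of the transfer step is genuinely different. The paper works over the ring of algebraic integers, encloses the characteristic-$0$ image of $\alpha$ in a hypersurface $\{f=0\}$ with integral coefficients, and reduces $f$ modulo $p$; one must then observe that $f$ can be chosen primitive so that its reduction is nonzero. You instead form the scheme-theoretic image $\mathcal{W}$ over a Cohen DVR $R$ and note that an integral, finite-type scheme dominating $\operatorname{Spec}R$ is automatically $R$-flat, so that $\dim\mathcal{W}_k = \dim\mathcal{W}_{\Frac R}$; this yields exact equality of dimensions rather than merely an enclosing hypersurface, at the cost of a bit more scheme-theoretic machinery. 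You have also made explicit a point the published proof leaves tacit but relies on: that $V^x_R$ is an $R$-subbundle whose formation commutes with base change (since $x$ acts on each $T$-weight space of $V\ot R$ by $1$ or by an $r$th root of unity that stays a unit $\neq 1$ modulo $p$), which is what guarantees $\alpha_k$ has image all of $V(\bar x)$ rather than a proper subset. Both routes are valid; yours is slightly longer but also slightly more careful, and your flatness argument correctly isolates why naive semicontinuity alone would not suffice.
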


\begin{proof}  Since $G(K)_v$ is  finite for generic $v$, there exists a prime $r \ne p$
so that $G(K)_v$ contains no elements of order $r$; as in Lemma \ref{lem:basic3}\eqref{Vx.back}, $\dim V(x) < \dim V$ for every $x \in G(K)$ of order $r$.

Let $C$ be a conjugacy class of elements of order $r$ in $G(k)$.
This class is actually defined over the ring of algebraic integers $R$ and consequently 
$C(K)$ and $C(k)$ are both irreducible and have the same dimension. 
Choose $x \in C(K) \cap G(R)$.  Consider the morphism from $G \times V^x \rightarrow V$
defined by $\alpha \!:(g,v) \mapsto gv$; the image of this morphism is $V(x)$.  
Note that this map is actually defined over $R$.   
As the image of $\alpha(K)$ is contained in a proper closed subvariety of
$V \ot K$, then the same is true of $\alpha(k)$ as any hypersurface of $V$ can
be defined by an equation $f=0$ for some polynomial $f$ over $R$ and then
we can reduce modulo $p$.    Thus, $\dim V(\bar{x}) < \dim V$ where $\bar{x}$ is the reduction of $x$
and is an element of $C(k)$, and $\dim V(y) < \dim V$ for any element
$y$ of order $r$ in $G(k)$.  Now apply   Lemma \ref{lem:basic3}\eqref{Vx.ss}.
\end{proof}

We need to deal with a few special cases. 

\begin{lem}  \label{twisted}  Suppose the hypotheses of Theorem \ref{generic}.  
If $V \cong W \otimes W'$ for a representation $W$ of $G$,
where $W'$ is a nontrivial Frobenius twist of $W$ or $W^*$, then
$G_v$ is finite for generic $v \in V$.
\end{lem}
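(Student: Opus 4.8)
The plan is to reduce to the previous machinery by exhibiting a prime $r$ for which $\dim V(x) < \dim V$ for all $x \in G$ of order $r$, and then invoke Lemma \ref{lem:basic3}\eqref{Vx.ss}; in fact I would aim directly for the stronger conclusion that $G_v$ is finite by producing such control for two coprime primes, or by controlling unipotent elements as well. The key observation is that $V \cong W \ot W'$ with $W' = W^{(q)}$ or $(W^*)^{(q)}$ for a nontrivial Frobenius twist, so $\dim V = (\dim W)^2 =: d^2$, while $\dim G < d^2$ since the minimal faithful dimension of a simple group always satisfies $d^2 > \dim G$; thus the orbit of a generic $v$ has dimension $d^2 - \dim G_v$, and we need to show $\dim G_v = 0$ generically.

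First I would compute $V^x$ for a semisimple $x \in G(k)$ of order $r$. Since $x$ is semisimple it acts diagonalizably on $W$ with eigenvalues $\mu_1, \ldots, \mu_d$ (in $\bar{k}^\times$); then on $W' = W^{(q)}$ it acts with eigenvalues $\mu_i^q$ (or $\mu_i^{-q}$ in the dual-twist case), so on $V$ its eigenvalues are the products $\mu_i \mu_j^{\pm q}$, and $\dim V^x$ is the number of pairs $(i,j)$ with $\mu_i\mu_j^{\pm q} = 1$. The crucial point is that because $q = p^a$ with $a \ge 1$, the exponent $q$ is strictly larger than $1$, so the map $\mu \mapsto \mu^q$ is ``expanding'' on roots of unity of order prime to $p$: for a fixed $i$, there is at most one $j$ with $\mu_j^{\pm q} = \mu_i^{-1}$ only if the relevant root-of-unity equation has a solution, and generically (as $x$ ranges over a maximal torus) one forces $\dim V^x$ to be much smaller than $d^2$. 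More carefully, I would bound $\dim V(x) = \dim G - \dim C_G(x) + \dim V^x \le \dim G + \dim V^x$, and then show that for a suitable choice of $r$ (large, prime to $p$) every $x$ of order $r$ has $\dim V^x$ small enough that $\dim G + \dim V^x < d^2 = \dim V$. Since $\dim G < d^2$, it suffices to ensure $\dim V^x \le d^2 - \dim G - 1$; using $\dim C_G(x) \ge \rank G$ one can do better and bound $\dim V(x)$ by roughly $\dim G - \rank G + \dim V^x$.

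The main obstacle is controlling $\dim V^x$ uniformly over all semisimple $x$ of order $r$, i.e., showing that the Frobenius twist genuinely prevents the eigenvalue-products from being $1$ too often. The cleanest route is: parametrize such $x$ by their image in the maximal torus $T$; then $\dim V^x$, as a function of $x$, is upper semicontinuous and the locus where it is large is a proper closed subvariety of $T$ precisely because the equations $\mu_i \mu_j^{\pm q} = 1$ cut out (for generic weights, i.e., for a faithful $W$) a thin subset — this uses that $W$ is a faithful representation so the characters $\mu_i$ separate points of $T/Z$, together with $q > 1$. One then picks $r$ avoiding the finitely many ``bad'' torsion orders coming from this subvariety, exactly as in Lemma \ref{lem:basic3}\eqref{Vx.back} and Lemma \ref{lem:basic2}. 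I would also separately dispatch unipotent $x$: a nontrivial unipotent $u$ has $\dim V^u \le \dim V - (\text{number of nontrivial Jordan blocks of } u \text{ on } V)$, and since $u$ acts nontrivially on $W$ it acts nontrivially on $W \ot W'$ with enough nontrivial blocks that $\dim G + \dim V^u < \dim V$ again; this plus the semisimple case feeds into Lemma \ref{lem:basic3}\eqref{Vx.all} to conclude $G_v$ is finite for generic $v$.
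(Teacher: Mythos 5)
Your proposal takes a genuinely different route from the paper, and unfortunately it has gaps that I do not see how to close as written.

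The paper's proof is a two-liner. First, since $W$ is a representation of the simple group $G$, the image of $G$ in $\GL(W)$ lies in $\SL(W)$, and the stabilizer $G_v$ maps into $\SL(W)_v$ (acting on $V = W \ot W'$ via $g \mapsto g \ot g^{(q)}$ or $g \ot (g^*)^{(q)}$); so it suffices to treat $G = \SL(W)$. Second, \cite[Lemma~2.6]{GLMS} says precisely that $\SL(W)$ has only finitely many orbits on $\PP(V)$ in this Frobenius-twisted tensor situation, and since $\dim \PP(V) = (\dim W)^2 - 1 = \dim \SL(W)$, the dense orbit has zero-dimensional stabilizer; hence $\SL(W)_v$ and therefore $G_v$ is finite for generic $v$. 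Your proposal neither makes this reduction nor cites the finitely-many-orbits result, which is the whole crux.

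Your alternative, via eigenvalue estimates and Lemma \ref{lem:basic3}, does not close. For $G = \SL(W)$ with $\dim W = d$ and $x$ a regular semisimple element of prime order $r \nmid p$, the best uniform bound you can extract from "$\mu \mapsto \mu^q$ is a bijection on $r$-th roots of unity" is $\dim V^x \le d$ (each $i$ has at most one $j$ with $\mu_i\mu_j^{\pm q} = 1$). Combined with $\dim {^Gx} = \dim G - \dim C_G(x) \le (d^2-1)-(d-1) = d^2 - d$, Lemma \ref{lem:basic1} only gives $\dim V(x) \le d^2 = \dim V$, which is not a strict inequality; the estimate sits exactly on the boundary. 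The same difficulty arises for unipotent $x$: a regular unipotent has $\dim C_G(u) = d-1$, and the Jordan form of a tensor product of two size-$d$ blocks in characteristic $p$ has at least $d$ blocks, so again $\dim V(u) \le d^2$, not $< d^2$. To get strict inequalities you would need to show the bound $\dim V^x \le d$ is not attained for \emph{any} $x$ of the chosen order $r$ (not just generic $x$ in the torus), and your appeal to Lemma \ref{lem:basic3}\eqref{Vx.back} cannot help here because that lemma already assumes $\dim G_v = 0$ for some $v$, which is the conclusion you are after. The passage from "generic $x$ in $T$ has small $V^x$" to "every $x$ of order $r$ has $\dim V(x) < \dim V$" is exactly where the hard work lives, and it is not supplied. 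Moreover your proposal treats $G$ as an arbitrary simple group rather than reducing to $\SL(W)$, which makes the weight/eigenvalue bookkeeping considerably harder without the compensation of a sharper bound.

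The lesson here is that the Frobenius twist is being exploited at a coarser, more structural level in the paper (finitely many orbits on $\PP(V)$) rather than at the level of eigenvalue collisions; the latter is too delicate, as the dimensions are exactly balanced.
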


\begin{proof} It suffices to consider the case that $G=\SL(W)$, in which case $G$ has finitely many orbits on $\PP(V)$ 
by \cite[Lemma 2.6]{GLMS} and the result follows since $\dim \PP(V) = \dim G$.
\end{proof}

\begin{lem} \label{lem:basic1}  
Let $G$ be an algebraic group acting on an irreducible variety $V$ over an algebraically closed field $k$.  For $x \in G$, 
if $\dim {^Gx} + \dim V^x  < \dim V$,  then $V(x)$ is contained in a proper closed
subvariety of $V$.
\end{lem}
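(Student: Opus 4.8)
The plan is to realize $V(x)$ as the image of an incidence variety whose dimension is visibly at most $\dim{}^{G}x+\dim V^x$, paralleling the morphism $G\times V^x\to V$ used in the proof of Lemma~\ref{lem:basic2}. First I would set $C:={}^{G}x$, the conjugacy class of $x$; it is a locally closed subvariety of $G$ of dimension $\dim G-\dim C_G(x)=\dim{}^{G}x$. Then I would introduce
\[
\Sigma:=\{(y,v)\in C\times V\mid y\cdot v=v\},
\]
a locally closed subvariety of $C\times V$, with its two projections $\pi_1\colon\Sigma\to C$ and $\pi_2\colon\Sigma\to V$. Since $\{{}^{g}x\mid g\in G\}=C$, the image $\pi_2(\Sigma)$ is exactly $V(x)$; in particular $V(x)$ is constructible and $\dim\overline{V(x)}\le\dim\Sigma$.

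Next I would bound $\dim\Sigma$ by looking along $\pi_1$. The fiber of $\pi_1$ over a point $y\in C$ is the fixed-point subvariety $V^y$, and writing $y={}^{g}x$ one has $V^y=g\cdot V^x$, so every fiber of $\pi_1$ has dimension exactly $\dim V^x$ (and $\pi_1$ is surjective since $x\in C$). If $V^x=\emptyset$, then $\Sigma=\emptyset$ and $V(x)=\emptyset$, so there is nothing to prove; otherwise, applying the upper-bound form of the fiber dimension theorem to each irreducible component of $\Sigma$ (each mapping into $\overline{C}$ under $\pi_1$ with fibers of dimension at most $\dim V^x$) gives $\dim\Sigma\le\dim\overline{C}+\dim V^x=\dim{}^{G}x+\dim V^x$. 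Combined with the hypothesis, $\dim\overline{V(x)}\le\dim{}^{G}x+\dim V^x<\dim V$, and since $V$ is irreducible this forces $\overline{V(x)}$ to be a proper closed subvariety of $V$, which of course contains $V(x)$.

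The computation is routine once it is set up; the only point requiring care --- and the reason the hypothesis involves $\dim{}^{G}x$ rather than the cruder $\dim G$ --- is the dimension bookkeeping along $\pi_1$: the set $V(x)=\bigcup_{g\in G}g\cdot V^x$ is genuinely parametrized by the conjugacy class $C\cong G/C_G(x)$, since ${}^{g}x={}^{g'}x$ forces $c:=g^{-1}g'\in C_G(x)$, and such a $c$ preserves $V^x$ (because $x(c v)=c(x v)=cv$), so $g\cdot V^x=g'\cdot V^x$. Equivalently, one may argue with the morphism $\alpha\colon G\times V^x\to V$, $\alpha(g,v)=g\cdot v$, whose image is $V(x)$ and each of whose fibers contains a copy of $C_G(x)$ (via $c\mapsto(gc^{-1},cv)$), so that $\dim\overline{\im\alpha}\le\dim(G\times V^x)-\dim C_G(x)=\dim{}^{G}x+\dim V^x$. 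I do not expect a genuine obstacle here.
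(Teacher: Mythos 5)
Your proposal is correct, and the second argument you sketch at the end (via $\alpha\colon G\times V^x\to V$ with fibers containing a copy of $C_G(x)$) is exactly the paper's proof. Your primary argument via the incidence variety $\Sigma\subset {}^{G}x\times V$ and the projection $\pi_1$ is a mild repackaging of the same idea: in place of dividing $\dim(G\times V^x)$ by $\dim C_G(x)$ using the fiber structure of $\alpha$, you quotient at the source by working over $C\cong G/C_G(x)$ directly, using the identity $V^{{}^{g}x}=g\cdot V^x$ to see that all $\pi_1$-fibers have dimension $\dim V^x$. Both routes are correct and give the bound $\dim\overline{V(x)}\le\dim{}^{G}x+\dim V^x$; there is no substantive difference.
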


\begin{proof}  Define $\alpha: G \times V^x \rightarrow V$ by 
$\alpha(g,w)=gw$, so the image of $\alpha$ is precisely $V(x)$.
 The fiber over $gw$ contains $(gc^{-1}, cw)$ for $c \in C_G(x)$, so has dimension at least that of $C_G(x)$, 
whence $\dim V(x) \le  \dim  {^Gx} + \dim V^x$.
\end{proof}

\begin{lem}  \label{A1}   Suppose the hypotheses of Theorem \ref{generic} 
and that $G$ has type $A_1$.
If $\dim V > 5$, then $G_v$ is trivial for generic $v$. If
$\dim V=4$ or $5$, then $G_v$  is finite for generic $v$.
\end{lem}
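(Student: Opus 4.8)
The plan is to reduce everything to the dimension inequality $\dim {}^Gx + \dim V^x < \dim V$ of Lemma~\ref{lem:basic1}, applied to all nonidentity $x$ in $G \le \SL(V)$, and then to feed the resulting statements into Lemma~\ref{lem:basic3}. Since $G$ has type $A_1$ we have $\dim G = 3$, so the hypothesis $\dim V > \dim G$ of Theorem~\ref{generic} reads $\dim V \ge 4$, and $G$ is isomorphic to $\SL_2$ or $\PGL_2$. Passing to a Frobenius twist of $V$ (which alters neither the image of $G$ in $\SL(V)(k)$ nor the dimension of any stabilizer), I may assume that either $V = \Sym^d$ of the natural module with $3 \le d \le p-1$, forcing $\car k = 0$ or $\car k \ge 5$, or $V$ is a tensor product of at least two nontrivial (Frobenius-twisted) restricted irreps of $\SL_2$. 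Throughout, a nonidentity $x \in G$ is either noncentral, in which case $\dim C_G(x) = 1$ and hence $\dim {}^Gx = 2$, or it is the nontrivial central element acting by the scalar $-1$, for which $V^x = 0$ and $V(x) = 0 \ne V$; so the only real work is to bound $\dim V^x$ for noncentral $x$.

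Consider first the tensor-decomposable case. The only such $V$ with $\dim V \le 5$ is the $4$-dimensional module $W \otimes W^{[e]}$ with $W$ the natural module, and here Lemma~\ref{twisted} directly gives that $G_v$ is finite for generic $v$ -- exactly what the lemma asserts at $\dim V = 4$. Every other tensor-decomposable $V$ has $\dim V \ge 6$, and for these I will show $\dim {}^Gx + \dim V^x < \dim V$ for all nonidentity $x$: for a regular unipotent, $\dim V^x$ is the number of Jordan blocks of $\bigotimes_i J_{a_i + 1}$, which is at most $(\dim V)/\max_i(a_i + 1) \le (\dim V)/2 \le \dim V - 3$; for a noncentral semisimple $x$ one uses that $x$ has no trivial summand on any nontrivial twisted tensor factor, and then a short combinatorial argument (no two sign patterns producing a fixed line can differ in a single factor) shows that its fixed-line count is at most $\dim V - 4$. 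With $\dim {}^Gx = 2$ this gives the inequality, so Lemma~\ref{lem:basic1} makes every $V(x)$ a proper subvariety and Lemma~\ref{lem:basic3}\eqref{Vx.all} yields $G_v = 1$ for generic $v$, as claimed for $\dim V > 5$.

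Now take $V = \Sym^d$ with $d \ge 3$. A regular unipotent acts as one Jordan block of size $d + 1 \le p$, so $\dim V^x = 1$; a noncentral semisimple $x$ of order $\ell$ acts with eigenvalues $\mu^{d}, \mu^{d-2}, \dots, \mu^{-d}$, so $\dim V^x$ is the number of exponents among $d, d-2, \dots, -d$ divisible by $\ell$, which is at most $\lceil (d+1)/2 \rceil$ and is close to that bound only for $\ell \in \{2,3\}$. A short case check then shows $\dim {}^Gx + \dim V^x < \dim V = d + 1$ for every nonidentity $x$ once $d \ge 5$, whence Lemmas~\ref{lem:basic1} and~\ref{lem:basic3}\eqref{Vx.all} give $G_v = 1$ for generic $v$; the only failures occur at $d = 3$ (the elements of order $3$, which visibly stabilize a generic binary cubic since every triple of points of $\PP^1$ is a $\mu_3$-orbit) and at $d = 4$ (the involutions of $\PGL_2$, which stabilize a generic binary quartic since a generic quadruple of points of $\PP^1$ is invariant under an involution). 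For these two cases I argue only for finiteness of $G_v$: choosing a large prime $r \ne \car k$, every element $x$ of order $r$ has distinct eigenvalues on $\Sym^d$, so $\dim V^x \le 1$ and $\dim {}^Gx + \dim V^x \le 3 < \dim V$; Lemma~\ref{lem:basic3}\eqref{Vx.ss} then gives that $G_v^\circ$ is unipotent for generic $v$, and since $\dim V(u) < \dim V$ for each of the finitely many nonidentity unipotent classes, generic $G_v$ contains no nonidentity unipotent, whence $G_v^\circ$ is trivial.

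The step I expect to be the main obstacle is the bookkeeping for the tensor-decomposable representations: confirming that $\dim V = 4$ is the only tensor-decomposable dimension at most $5$, and that the fixed-space bounds -- particularly for semisimple elements of small order acting on a product of three or more twisted symmetric powers -- really do stay at least $3$ below $\dim V$. By comparison the restricted case is an elementary computation with symmetric powers whose only subtlety is recognizing the two genuine exceptions $\Sym^3$ and $\Sym^4$ and checking that there the conclusion correctly weakens to ``$G_v$ finite''.
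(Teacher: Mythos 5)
Your conclusions are correct and you invoke the same ingredients --- Lemmas~\ref{lem:basic1}, \ref{lem:basic3}, and \ref{twisted} --- but the route through $\dim V > 5$ is much more elaborate than the paper's. The paper disposes of that case with a single uniform inequality, valid for \emph{every} nonidentity $x \in G$ on every irreducible $A_1$-module: $\dim V^x \le (1 + \dim V)/2$, which is $< \dim V - 2$ exactly when $\dim V > 5$. Combined with $\dim {}^Gx \le 2$, Lemmas~\ref{lem:basic1} and \ref{lem:basic3}\eqref{Vx.all} then finish in one line, replacing your separate tensor-decomposable and $\Sym^d$ analyses and your unipotent-versus-semisimple split.

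The bookkeeping you introduce is also where you have a genuine gap, as you anticipate. The ``sign pattern'' heuristic for the semisimple fixed-space count only makes literal sense when every twisted factor is the $2$-dimensional natural module; for factors $\Sym^{d_i}(W)^{[e_i]}$ with $d_i \ge 2$ the relevant quantity is the number of tuples $(k_i)$ with $\sum_i p^{e_i}(d_i - 2k_i) \equiv 0 \pmod{\ell}$, and the asserted bound $\dim V^x \le \dim V - 4$ is left without justification. Your Jordan-block estimate $(\dim V)/\max_i(a_i+1)$ for the unipotent case likewise needs a positive-characteristic argument, since tensor-product Jordan types there are not the characteristic-zero ones. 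None of this threatens the conclusion, but it is precisely the complexity the paper's uniform bound sidesteps. Your treatment of $\dim V \in \{4,5\}$, by contrast, is sound and close to the paper's: you isolate the same exceptional elements (order $3$ on $\Sym^3$; on $\Sym^4$ the elements acting as involutions, hence of order $4$ in $\SL_2$). Where the paper then argues directly that the generic stabilizer has bounded exponent and so is finite, you route through Lemma~\ref{lem:basic3}\eqref{Vx.ss} with a large prime and eliminate unipotents separately --- equivalent and fine --- and your use of Lemma~\ref{twisted} for the tensor-decomposable $4$-dimensional case is exactly the paper's.
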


\begin{proof}   Suppose $\dim V > 5$.  For nonidentity $x \in G$,
$\dim V^x \le (1+\dim V)/2 < \dim V  - 2$.  Since $\dim {^Gx} =2$ for any
noncentral $x \in G$,  Lemmas \ref{lem:basic1} and \ref{lem:basic3}\eqref{Vx.all} give that $G_v$ is trivial for generic $v$.

If $\dim V =4$ and $V$ is not a twist of a restricted module, then apply Lemma \ref{twisted}.
Finally, suppose that $\dim V = 4$ or $5$ and $V$ is restricted (in particular, the characteristic 
is at least $5$).  Any nontrivial unipotent element $x$ has a $1$-dimensional fixed space
and so $\dim V(x) \le 3$. Suppose that $x$ is semisimple but non central.   If $\dim V =4$, 
then $\dim V^x \le 1$ unless $x$ has order $3$, whence $G_v$ has exponent $3$
and so is finite.  If $\dim V =5$, then $\dim V^x  =1$ unless $x$ has order $4$ (and so
is acting as an involution on $V$).  Again, we see that $G_v$ is finite. 
\end{proof}

\begin{eg}[$S^2 \SO_n$] \label{lem:so(n)}  
Let $G = \SO_n(k), n \ge 4$ and $V = L(2\omega_1)$, $p \ne 2$.
We claim that the generic stabilizer is elementary abelian of order $2^n$.

 Let $W$ be the natural module for $G$ and consider $V':=\Sym^2(W)$.
If $p$ does not divide $n$, then $V' \cong k \oplus L(2\omega)$.  Thus, we see
the stabilizer of a generic point is the intersection of $G$ with some conjugate
of $G$ in $\SL(W)$.  Since $\SO(W)$ is the centralizer of an involution in $\SL(W)$
and generically the product of two such involutions is a regular semisimple element, 
it follows that the intersection will generically be the group of involutions in a maximal torus.

If $p$ does divide $n$, then $V'$ is a uniserial module for $G$ with $3$ composition
factors with a trivial socle and head.   Let $V''$ be the radical of this module
and so $V \cong V''/k$.  Clearly, a generic point $v \in V''$ corresponds to a nondegenerate
quadratic form.  Thus, the stabilizer of $v$ in $\SL(W)$ is precisely a conjugate of
$\SO(W)$.  Since $p \ne 2$, $\SO(W)$ is a maximal closed subgroup of $\SL(W)$.
Thus, if $g \in \SL(W)$ and $gv - v \in k$, then $g$ normalizes the stabilizer
of $v$.  Since $\SO(W)$ is self-normalizing in $\SL(W)$, this implies that
$g$ already fixed $v$.   The argument of the previous paragraph 
still applies to give the claim.
\end{eg}  

\begin{proof}[Proof of Theorem \ref{generic}]
Suppose first that $\dim V > \dim G + 2$.  
By \cite[p.~15]{Kenneally}, Lemma \ref{twisted}, and Example \ref{lem:so(n)}, it suffices to consider the following cases:
\begin{enumerate}
\renewcommand{\theenumi}{\alph{enumi}}
\item\label{generic.sl8} $G = \SL_8(k) = \SL(W)$ and $V = \wedge^4 W$. 
\item \label{generic.c44} $G=C_4(k)$ and $V=L(\lambda_4)$, $p \ne 2$.
\item \label{generic.d8} $G = \HSpin_{16}$ and $V$ is a half-spin representation of dimension $128$.  
\item\label{generic.sl9}  $G = \SL_9(k) = \SL(W)$ and $V = \wedge^3 W$.
\item \label{generic.c43} $G = C_4(k)$ and $V = L(\lambda_3)$, $p = 3$.
\end{enumerate}
The first four cases follow from \cite{Vinberg:Weyl} for $\car k \ne 2, 3, 5$, as we now explain using \cite{Levy}.  Set $e = 1$ for cases \eqref{generic.sl8}, \eqref{generic.c44}, and \eqref{generic.d8}, and $e = 2$ for \eqref{generic.sl9}.
Then there is a subspace $\fc$ of $V$ such that $\dim \fc \le (1/e) \rank G$ and there is a
finite and surjective morphism of varieties $\fc \to V/G$, see Lemma 2.7 and p.~432 of \cite{Levy} respectively. Therefore,
$\dim V - \dim G + \dim G_v  \le (1/e) \rank G$.
For each of the possibilities for $G$ and $V$, one checks that
$e(\dim V - \dim G) = \rank G$, so $G_v$ is finite for generic $v$ if $\car k \ne 2, 3, 5$.

In any characteristic, for \eqref{generic.sl8}, \eqref{generic.d8}, and \eqref{generic.sl9}, the representation is minuscule and so equals the Weyl module of the same highest weight over $k$, and Lemma \ref{lem:basic2} gives that generic stabilizers have unipotent identity component.  For \eqref{generic.c44} and $p \ge 5$, $L(\lambda_4)$ is the Weyl module and the same argument works; for $p = 3$, we find that the generic stabilizer for the Weyl module $V'$ has unipotent identity component, and it is easy to see that the same holds for $L(\omega_4) = V'/k$.

For \eqref{generic.c43}, $V = L(\omega_3)$ has dimension 40.
 Let $W$ be the natural $8$ dimensional module for
$G$.  Then $\wedge^3 W$ has composition factors   $W,W, V$.
Now compute for each involution $x \in \Sp(W)$ that
$\dim V^x + \dim {^Gx} < \dim V$.  Thus, for a generic $v$,
$G_v$ contains no involutions and so no torus.

\smallskip
We are reduced to considering the cases $\dim V = \dim G + 1$ or $\dim G + 2$.
First suppose that $G$ is tensor decomposable.  
If $d$ is the minimal dimension of an irreducible representation
of $G$, then $\dim V \ge d^2 > \dim G +2$ unless $G=A_n$
and up to a twist $V \cong W \otimes W'$ where $W$ is the natural
module for $G$ and $W'$ is a Frobenius twist of $W$ or $W^*$.
By Lemma \ref{twisted}, the result holds in this case.

So we may assume that $V$ is tensor indecomposable and in particular
is restricted if $p \ne 0$.  Inspection of the tables in \cite{luebeck}
leave  the following possibilities:

\begin{enumerate}
\renewcommand{\theenumi}{\alph{enumi}}
\setcounter{enumi}{5}
\item\label{generic.a1} $G=A_1$ and $V=L(3\omega_1)$ or $L(4\omega_1)$ with $p \ge 5$.
\item\label{generic.a2} $G =A_2$, $V=L(3\omega_1)$ and $p \ge 5$. 
\item\label{generic.a3} $G=A_3$, $V=L(\omega_1 + \omega_2)$ and $p=3$. 
\item\label{generic.b2}  $G=B_2$, $V=L(\omega_1 + \omega_2)$ and $p=5$.  
\end{enumerate}
In case \eqref{generic.a1}, $G_v$ is finite for generic $v$ by Lemma \ref{A1}.
In case \eqref{generic.a2} the result follows by Lemma \ref{lem:basic2}.   
In case \eqref{generic.a3},  \cite{cohenwales} shows that $G_v$ is finite.  
It remains to
consider \eqref{generic.b2}.   Note that the nonidentity central element $z$ of $G$ acts nontrivially.   Let $x \in G$
be a noncentral  involution (there is precisely one such conjugacy class).
Since $x$ and $xz$ are conjugate, it follows that $\dim V^x=6 = (1/2) \dim V$.
Since $\dim {^Gx} = 4$, we see that for generic $v \in V$,
$G_v$ contains no involutions, whence the result.
\end{proof} 

\section{Same rings of invariants: examples} \label{same.egs}

In this section, $k$ is a field of characteristic $p \ge 0$.  We give examples of simple algebraic groups $\X < \Y < \GL(V)$ so that $k[V]^\X = k[V]^{\Y}$.

\begin{eg}[$\Spin_{11} \subset \HSpin_{12}$] \label{spin12.eg}
Let $V$ be a half-spin module for $\Y = \HSpin_{12}$ and consider the subgroup $\X = \Spin_{11}$.  Suppose first that $\car k \ne 2$.  Igusa calculates in \cite{Igusa} that $k[V]^\X = k[V]^{\Y} = k[f]$ for a homogeneous quartic form $f$ on $V$.  (Alternatively, this can be understood from the point of view of internal Chevalley modules as in \cite{ABS} and \cite[Th.~4.3(3)]{Ruben:np}.  For the determination of the ring of invariants, it suffices to note that there is an $\Y$-invariant quartic form on $V$, which can be constructed from the root system as in \cite{Helenius}.)

Now let $\car k = 2$.  The representation $V$ of $\Y$ is obtained by base change from a representation defined over $\Z$, and the quartic form as in the previous paragraph reduces to the square of a quadratic form $q$, i.e., $k[V]^\Y$ contains $k[q]$, and in fact we have equality because $\dim k[V]^\Y = 1$ by \cite[6.2]{BGL}.  Now, $\dim k[V]^\X = 1$ by \cite[2.11]{GLMS}, so $k[V]^\X = k[V]^\Y = k[q]$.
\end{eg}

\begin{eg}[$\PGL_3 \subset G_2$ in char.~3] \label{A2G2.eg}
Suppose $p = 3$ and put $V$ for the 7-dimensional irreducible representation of $\Y$ split of type $G_2$.  The short root subgroups of $\Y$  generate a subgroup $\X$ isomorphic to $\PGL_3$---see, e.g., \cite[\S7.1]{CGP}---such that the restriction of $V$ to $\X$ is the irreducible part of the adjoint representation \cite{seitzmem}.  (Alternatively, the inclusion $\PGL_3 \le G_2$ can be viewed from the perspective of octonion algebras as in \cite{MatzriVishne}.)

There is a nonzero $G_2$-invariant quadratic form $q$ on $V$, and $G_2$ acts transitively on the non-vanishing set of $q$ in $\PP(V)$; it follows that $k[V]^{\Y} = k[q]$.  As $A_2$ has finitely many orbits on $\PP(V)$ \cite[2.5]{GLMS}, it follows that $k[V]^\X$ must equal $k[q]$ too.
\end{eg}

\begin{eg}[$\SO_{2n}$ in $\Sp_{2n}$ in char.~2] \label{SOSp.eg}  
Let $W$ be a $2n$-dimensional vector space over $k$ algebraically closed of characteristic 2, for some $n \ge 3$.
Write $q$ for a non-degenerate quadratic form on $W$ and $b$ for the alternating form $b(v, w) := q(v+w) + q(v) + q(w)$ for $v, w \in W$.  Put $\SO(W)$ and $\Sp(W)$ for the special orthogonal group of $q$ and the symplectic group of $b$ respectively.
Set $V$ to be the irrep of $\Sp(W)$ with highest weight $\omega_2$ as in Example \ref{self.1}.  We now sketch a proof that $k[V]^{\SO(W)} = k[V]^{\Sp(W)}$. 

Recall the definition of the space $Y$ of self-adjoint operators from Example \ref{self.1}.  We claim that for semisimple self-adjoint operators $T$, $\Sp(W)\cdot T = \SO(W) \cdot T$.  To see this, note that $T$ is a polynomial in a semisimple self-adjoint operator $S$ whose minimal polynomial has degree $n$.  Thus $W = W_1 \perp W_2 \perp \ldots \perp W_n$ where the $W_i$'s are 2-dimensional nondegenerate (and are the distinct eigenspaces of $S$).  Since $\SO(W)$ is transitive on such decompositions,  the
$\SO(W)$ and $\Sp(W)$ orbits of $T$ agree. 
As the semisimple elements are dense in $Y$, it follows that $k[Y]^{\Sp(W)} = k[Y]^{\SO(W)}$, and the same proof shows that the rings of invariants also coincide for $Y_0$ and if $n$ is even for $Y_0/Y_0^{\Sp(W)}$. 

Note that this representation of $\SO(W)$ is the irreducible part of the adjoint module, and that roughly speaking the generators of the ring of invariants have half the degree one finds in characteristic zero.
\end{eg}

\begin{eg}[$F_4$ in char.~2] \label{F4.eg}
Let $\Y = F_4$ and take $V$ to be the $26$-dimensional irreducible representation for $k$ a field of characteristic 2.  Then $\Y$ contains subgroups $\X$ of type $C_4$ and adjoint $D_4$, and the restriction of $V$ to $\X$ is the representation studied in the previous example, see \cite[\S4]{seitzmem}.
Viewing $V$ as the space of trace zero elements in an Albert algebra $A$ and $\Y$ as the automorphism group of $A$, we see that $\Y$ preserves the coefficients of the generic minimal polynomial on $A$ whose restrictions to $V$ are algebraically independent functions of degree 3 (the norm) and 2 (sometimes called the ``quadratic trace'').  The last paragraph of Example \ref{Lie.SL} gives that $k[V]^\X$ is a polynomial ring with generators of degrees 3 and 2; as it is contained in $k[V]^{\Y}$, we conclude that $k[V]^{\Y} = k[V]^\X$.
\end{eg}

\begin{eg}[tensor decomposable in positive characteristic]  \label{twisted.eg} 
Let
$\X=\SL(W)$ where $W$ is a $k$-vector space and $\car k \ne 0$.
 Let $\sigma$ be a Frobenius twist on $\Y$.
Let $V$ be either $W \otimes W^{\sigma}$ or $W^* \otimes W^{\sigma}$.  
Then $\X$ is contained in $\Y = \SL(W) \otimes \SL(W)$ and $\X$ and $\Y$ both act
irreducibly on $V$.  Identifying $V$ with $\mathrm{End}(W)$, we see
that $\Y$ leaves invariant $\det$.  Since $\X$ has a dense orbit on
$\PP(V)$ \cite[Lemma 2.6]{GLMS}, it follows that $k[V]^\X=k[V]^{\Y}=k[\det]$.   Note
also that the generic point of $V$ has a finite stabilizer in $G$, as follows
by dimension and the fact that $G$ has a dense orbit on $\PP(V)$.
\end{eg}

\section{Representations with few invariants} \label{dim.sec}

We now prove the following result, which is most interesting in prime characteristic.  We use what is known in characteristic zero (as in \cite{SK}, \cite{Kac:nil}, or \cite{PoV}) in our proof.

\begin{prop} \label{few}
Let $G \le \SL(V)$ be a simple algebraic group over an algebraically closed field $k$, such that $V$ is an irreducible $G$-module.  Then
up to a Frobenius twist or a twist by a graph automorphism, we have:
\begin{enumerate}
\item $k[V]^G = k$ if and only if $(G, V)$ appears in Table \ref{dim0}.
\item $k[V]^G = k[q]$ for a nonzero quadratic form $q$ if and only if $(G, V)$ appears in Table \ref{dim1}.
\end{enumerate}
\end{prop}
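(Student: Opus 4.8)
The plan is to reduce the positive-characteristic statement to the known characteristic-zero classification. The key invariant to track is $\dim k[V]^G$, which by \eqref{dim.eq} equals the codimension of a generic $G$-orbit in $V$; so part (1) asks for the pairs $(G,V)$ with a dense orbit and part (2) for those where the generic orbit has codimension exactly $1$. First I would dispose of the ``if'' directions: for each entry of Tables \ref{dim0} and \ref{dim1} one exhibits the invariant(s) directly — for the $k = k$ cases by naming a dense orbit (often these are internal Chevalley modules or prehomogeneous spaces whose dense orbit is insensitive to the characteristic, or one checks $\dim V = \dim G - \dim G_v$ by a direct stabilizer computation as in Lemma \ref{stab.const}), and for the $k[q]$ cases by noting $G$ preserves a nonzero quadratic form $q$ (so $k[q] \subseteq k[V]^G$) together with a transitivity statement for $G$ on a nonempty open subset of the quadric $\{q = c\}$, or equivalently a dimension count giving $\dim k[V]^G = 1$. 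Most of these transitivity/dimension facts are already used in the examples of \S\ref{same.egs} and \S\ref{cubic.sec}, so this direction is bookkeeping.

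For the ``only if'' direction, the strategy is: start with the Weyl module $V_{\mathrm{Weyl}}(\la)$ over $\C$ (or over $\Z$) having the same highest weight, whose ring of invariants is known from \cite{SK}, \cite{Kac:nil}, \cite{PoV}. The semicontinuity principle already invoked in the proof of Theorem \ref{e8} — that $\dim_k k[V \ot \kalg]^{G}$ can only go \emph{up} under reduction mod $p$, applied to a $\Z$-form of the space of invariants — shows that $\dim k[V]^G \le \dim_\C \C[V_{\mathrm{Weyl}}(\la)]^{G_\C}$ whenever $V$ is the reduction of the Weyl module. This immediately restricts $(G,\la)$ to a finite list: those whose characteristic-zero invariant ring has dimension $0$ or $1$, a list available from the cited references. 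For each surviving $(G,\la)$ one then must decide, in each relevant characteristic $p$, (a) whether the Weyl module is still irreducible — if not, $V = L(\la)$ is a proper subquotient and one argues separately that the invariant dimension is still $\le 1$, using e.g. \cite{GLMS} or \cite{BGL} for the small cases as in Examples \ref{spin12.eg} and \ref{F4.eg} — and (b) in the dimension-$1$ case, whether the generator is genuinely a \emph{quadratic} form rather than a higher-degree form or a $p$-th power times something; here one uses that a degree-$d$ generator would force $d \mid $ (something), and small-characteristic anomalies (like the quartic becoming a square in char $2$ in Example \ref{spin12.eg}) must be hunted down one by one. The ``up to Frobenius twist or graph automorphism'' clause lets one assume $\la$ is restricted and fix a chosen representative when two weights are swapped by a diagram symmetry.

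The main obstacle I anticipate is the finite residue of small-characteristic special cases where the Weyl module is reducible: there the clean semicontinuity bound no longer directly controls $\dim L(\la)^G$, and one has to compute the generic stabilizer, or the ring of covariants, essentially by hand — exactly the flavor of the arguments in \S\ref{same.egs} (Spin$_{11}\subset$HSpin$_{12}$ in char.~2, $F_4$ in char.~2, $\PGL_3 \subset G_2$ in char.~3) and of the generic-stabilizer analysis in Theorem \ref{generic}. I would organize this as a case table parallel to Tables \ref{dim0} and \ref{dim1}, citing \cite{luebeck} for which Weyl modules stay irreducible, \cite{GLMS} for orbit counts on $\PP(V)$, \cite{BGL} for invariant dimensions in the reducible cases, and Lemma \ref{lie.inv} / Example \ref{lie.scalar} to rule out the degenerate possibility that the putative generator is a $p$-th power. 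The secondary nuisance is making sure the ``if'' direction genuinely produces a quadratic (not merely degree-$\le 2$, i.e.\ nonzero) invariant in every characteristic listed in Table \ref{dim1}; this is where one must quote the root-system construction of invariant forms (as in \cite{Helenius}) and check nonvanishing mod $p$.
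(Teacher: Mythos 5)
Your reduction to characteristic zero is the wrong starting point, and it contains both an internal contradiction and a substantive gap that makes the resulting ``finite list'' incomplete.

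The internal contradiction: you state that the semicontinuity argument from the proof of Theorem \ref{e8} shows that the dimension ``can only go \emph{up} under reduction mod $p$,'' and then in the very next clause conclude $\dim k[V]^G \le \dim_\C \C[V_{\mathrm{Weyl}}(\la)]^{G_\C}$, which is the \emph{opposite} inequality. The semicontinuity that actually holds (for the Weyl module itself, say via upper semicontinuity of fiber dimension of the orbit map over $\Spec \Z$) gives $\dim k[V_{\mathrm{Weyl}} \ot k]^G \ge \dim_\C \C[V_{\mathrm{Weyl}}]^{G_\C}$, which does not restrict the list of candidates at all.

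The substantive gap: even with the correct direction noted, there are pairs $(G,\la)$ where $\dim_\C \C[V_{\mathrm{Weyl}}(\la)]^{G_\C} > 1$ but $\dim k[L(\la)]^G \le 1$, because passing from the Weyl module's reduction to the irreducible quotient $L(\la)$ can \emph{decrease} the Krull dimension of the invariant ring. This is exactly what happens for several entries of Table \ref{dim1}: for $(F_4,\la_4)$ with $\car k = 3$ the 26-dimensional Weyl module has a $2$-dimensional ring of invariants over $\C$ (quadratic trace and cubic norm), yet $k[L(\la_4)]^{F_4}=k[q]$ for the $25$-dimensional $L(\la_4)$; and for $(A_2, \la_1+\la_2)$ with $\car k = 3$ the adjoint module for $\SL_3$ has invariant dimension $2$ over $\C$ but the $7$-dimensional $L(\la_1+\la_2)$ has $k[V]^{\PGL_3}=k[q]$ (Example \ref{A2G2.eg}); similar drops occur for $(C_3,\la_2)$ in char $3$ and the tensor-decomposable $(A_1, \la_1+p^i\la_1)$ has no characteristic-zero analogue at all. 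Your strategy, which begins by discarding all $\lambda$ with $\dim \C[V_{\mathrm{Weyl}}(\la)]^{G_\C} > 1$, would never examine these pairs, so the ``only if'' direction of the proposition would fail.

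The paper sidesteps all of this by a single reformulation that works directly in the given characteristic: $\dim k[V]^G \le 1$ if and only if $G$ has a dense orbit on $\PP(V)$, and for a simple group acting irreducibly this is equivalent to having finitely many orbits on $\PP(V)$ --- a condition that has been classified in arbitrary characteristic by Guralnick--Liebeck--Macpherson--Seitz (Tables I and II of \cite{GLMS}). One then reads the candidate list straight off those tables and, case by case, pins down whether the invariant ring is $k$ or $k[q]$ (using Igusa, Liebeck, \cite{chen}, \cite{BGL}, and the examples of \S\ref{same.egs}). You do cite \cite{GLMS}, but only as an auxiliary tool for handling reducible Weyl modules; in fact it should be the organizing principle of the entire argument, replacing the semicontinuity-from-characteristic-zero scaffolding rather than supplementing it.
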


\begin{proof}
The dimension of $k[V]^G$ is at most 1 if and only if $G$ has an open orbit in $\PP(V)$.  Therefore, to identify all pairs $(G, V)$ with $k[V]^G = k$ or $k[q]$, it suffices to examine the list of $(G, V)$ with finitely many orbits in $\PP(V)$ from Tables I and II of \cite{GLMS}.  Some of the entries in Table I are excluded because they have $\dim k[V]^G = 1$ (e.g., because they are defined over $\Z$ and $\dim \C[V]^G = 1$) and are not self-dual.  The spin representation of $B_4$ is defined over $\Z$ and $\C[V]^{B_4}$ is generated by a quadratic form, so the same holds over $k$.  The representations $(A_5, \la_3)$ and $(E_7, \la_7)$ from Table I behave like $(D_6, \la_6)$ as described in Example \ref{spin12.eg}; when $\car k = 2$ they belong to Table \ref{dim1}.  The half-spin representation of $D_5$ has an open orbit in characteristic $\ne 2$ by \cite{Igusa} and in characteristic 2 by \cite[2.9]{Liebeck:affine}, so it belongs to Table \ref{dim0}.  The invariants of $(A_1, \la_1 + p^i \la_1)$, $(A_2, \la_1 + \la_2)$, $(C_3, \la_2)$, $(F_4, \la_4)$, and $(B_5, \la_5)$ were determined in \S\ref{same.egs}.  The ring of invariants of $(A_3, \la_1 + \la_2)$ is generated by an octic form \cite{chen}.  The representation $(C_3, \la_3)$ with $\car k = 2$ is in Table II, so its ring of invariants is 1-dimensional; on the other hand, this representation is identified with the spin representation of $B_3$, so it leaves a quadratic form invariant.
\end{proof}

\begin{table}
\begin{center}
\begin{tabular}{ccccc} 
$G$&$V$&$\dim V$&symplectic?&$\car k$ \\ \hline
$A_n$&$\la_1$&$n+1$&no&all \\
$A_n$ (even $n \ge 4$)&$\la_2$&$\binom{n+1}{2}$&no&all \\
$B_n$&$\la_1$&$2n$&yes&$2$ \\
$C_n$&$\la_1$&$2n$&yes&all \\
$D_5$&half-spin&16&no&all \\
$G_2$&$\la_1$&6&yes&2
\end{tabular}
\caption{Simple $G \le \SL(V)$ with $k[V]^G = k$} \label{dim0}

\begin{tabular}{cccc||cccc}
$G$&$V$&$\dim V$&$\car k$&$G$&$V$&$\dim V$&$\car k$ \\ \hline
$B_n$&$\la_1$&$2n+1$&$\ne 2$&$A_1$&$\la_1 + p^i\la_1$ ($i \ge 1$)&4&$= p \ne 0$ \\
$D_n$&$\la_1$&$2n$&all & $A_2$&$\la_1+\la_2$&7&3 \\
$A_1$&$2\la_1$&3&$\ne 2$ & $A_3$&$\la_2$&6&all \\
$A_5$&$\la_3$&20&2&$B_4$&$\la_4$&16&all \\
$B_3$&$\la_3$&8&all & $B_5$&$\la_5$&32&2 \\
$C_3$&$\la_3$&8&2 &$C_3$&$\la_2$&13& 3\\
$D_6$&half-spin&32&2 &$G_2$&$\la_1$&7&$\ne 2$ \\
$E_7$&$\la_7$&56&2 & $F_4$&$\la_4$&25&3
\end{tabular}
\caption{Simple $G \le \SL(V)$ with $k[V]^G = k[q]$ for a nonzero quadratic form $q$} \label{dim1}
\end{center}
\end{table}

\section{Same transcendence degree} \label{same}

The proof of our main result, Theorem \ref{MT}, relies on showing that there are very few inclusions of groups $G < H \le \SL(V)$ where the the rings of invariant functions $k[V]^G$ and $k[V]^H$ have the same 
(Krull) dimension, equivalently $k(V)^G$ and $k(V)^H$ have the same transcendence degree.   We actually prove a stronger result, namely that when the dimensions are the same, the rings are actually the same.

 \begin{thm}  \label{distinctinvariants}
 Suppose that $G < H \le \SL(V)$ with $G$ a simple algebraic group over an 
algebraically closed field $k$
acting irreducibly on $V$, and $H$ closed in $\SL(V)$.
If $\dim k[V]^G= \dim k[V]^H$, then $k[V]^G= k[V]^H$ and one of the following holds,  up to a Frobenius twist and/or a twist by a graph automorphism:
\begin{enumerate}
\renewcommand{\theenumi}{\alph{enumi}}
\item \label{di.SL} $H = \SL(V)$, $(G,V)$ is in Table \ref{dim0} and $k[V]^G=k$;
\item \label{di.Sp} $H = \Sp(V)$,  $\car k = 2$, $G = G_2$, $\dim V = 6$, and $k[V]^G=k$;
\item \label{di.SO} $H = \SO(V)$,  $(G,V)$ is in Table \ref{dim1} and $k[V]^G=k[q]$; or
\item \label{di.table} $(G,H,V)$ is in Table \ref{sameinv}.
\end{enumerate}
\end{thm}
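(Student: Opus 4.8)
The plan is to reduce the statement to a finite case analysis. Since $G$ acts irreducibly on $V$ it lies in no proper parabolic subgroup of $\SL(V)$, and hence neither does $H$; so by Proposition \ref{over} the identity component $H^\circ$ is semisimple, $[N_{\SL(V)}(H^\circ):H^\circ]$ is finite, and $G$ is contained in only finitely many closed subgroups of $\SL(V)$. As passing to a finite-index subgroup does not change Krull dimension, $\dim k[V]^{H^\circ}=\dim k[V]^H=\dim k[V]^G$, and $V$ is irreducible for $H^\circ$ because $G\le H^\circ$. So I would first prove the result when $H$ is connected semisimple, and then recover the general case from the structure of $N_{\SL(V)}(H^\circ)$. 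Moreover, for any closed connected $M$ with $G\le M\le H$ one has $k[V]^H\subseteq k[V]^M\subseteq k[V]^G$, so $\dim k[V]^M$ is again the common value; hence, inducting on $\dim H-\dim G$ (a finite chain by Proposition \ref{over}), it suffices to treat the case where there is no intermediate closed connected subgroup, i.e.\ $G$ is maximal among the closed connected subgroups of $H$. Together with the fact that $V$ is irreducible for $H$, this is exactly the situation classified by Seitz \cite{seitzmem} when $H$ is of classical type and by \cite{lieseitz} when $H$ is exceptional.

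The core of the argument is a dichotomy. Suppose first that $(G,V)$ does \emph{not} appear in Table~1 of \cite{seitzmem}; equivalently, the only closed connected overgroups of $G$ inside $\SL(V)$ are among $\SL(V)$, $\Sp(V)$, and $\SO(V)$. Then $\dim k[V]^H\le 1$, since $\SL$ and $\Sp$ act transitively on the nonzero vectors of their natural modules (so $k[V]^{\SL(V)}=k[V]^{\Sp(V)}=k$) while $k[V]^{\SO(V)}=k[q]$. If $\dim k[V]^G=0$, then $k[V]^G=k=k[V]^H$; here $G\not\le\SO(V)$ (otherwise $q\in k[V]^G$), so $H\in\{\SL(V),\Sp(V)\}$, and by Proposition \ref{few} the pair $(G,V)$ lies in Table \ref{dim0}, which is conclusion~(a) or~(b). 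If $\dim k[V]^G=1$, then $G<H=\SO(V)$ and $k[q]=k[V]^{\SO(V)}\subseteq k[V]^G$; since $\dim k[V]^G=1$ we have $k[V]^G=k[f]$ for some homogeneous $f$, and writing $q=cf^m$ with $m\deg f=2$ and using that $G$ admits no nonzero linear invariant forces $m=1$, so $k[V]^G=k[q]=k[V]^H$, and by Proposition \ref{few} the pair $(G,V)$ lies in Table \ref{dim1}, which is conclusion~(c). The case where $V$ is tensor decomposable for $G$ occurs only in characteristics $2$ and $3$ and is handled directly via Theorem \ref{generic} and Examples \ref{twisted} and \ref{twisted.eg}.

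If instead $(G,V)$ appears in Table~1 of \cite{seitzmem}, or $H$ is of exceptional type, there are only finitely many families of possibilities for $(G,H,V)$, and for each I would compute and compare $\dim k[V]^G$ and $\dim k[V]^H$. In characteristic $0$ these dimensions are available from the classifications of coregular and visible representations and their generic stabilizers (\cite{SK}, \cite{Kac:nil}, \cite{PoV}); moreover Theorem \ref{generic}, together with the known finiteness of generic stabilizers in characteristic zero, gives $\dim k[V]^G=\dim V-\dim G$ through \eqref{dim.eq} whenever $\dim V>\dim G$. In positive characteristic the characteristic-zero literature breaks down at the small primes, and there Proposition \ref{few}, Theorem \ref{generic}, and the explicit computations of Section \ref{same.egs} take its place. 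Only finitely many of these families yield a coincidence of the two dimensions, and those coincidences constitute Table \ref{sameinv}, which is conclusion~(d). In each coincidence one still has to promote ``$\dim k[V]^G=\dim k[V]^H$'' to ``$k[V]^G=k[V]^H$'': when the common ring is $k$ or $k[q]$ this follows from the density arguments of Section \ref{same.egs} and the absence of nontrivial characters of a connected semisimple group, and the remaining coincidences are precisely Examples \ref{spin12.eg}, \ref{A2G2.eg}, \ref{SOSp.eg}, \ref{F4.eg}, and \ref{twisted.eg} together with their Frobenius and graph twists.

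The step I expect to be the main obstacle is the finite but long sweep of the previous paragraph: one must run through every family in Seitz's Table~1 and every maximal connected subgroup of an exceptional group, and in each case determine the full ring of invariants — not merely its dimension — in all the relevant characteristics, since it is exactly at the small primes that the characteristic-zero results fail and the nontrivial coincidences occur. A secondary, more bookkeeping-type obstacle is the reduction from arbitrary closed $H$ to connected $H$, namely controlling the finite extensions $G\le H$ whose identity component is one of the groups already identified.
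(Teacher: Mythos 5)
Your overall architecture matches the paper: split off the tensor decomposable case, invoke Seitz's classification, handle $H\in\{\SL(V),\Sp(V),\SO(V)\}$ via Tables \ref{dim0} and \ref{dim1}, and run a finite case analysis on the rest. Your treatment of conclusions (a)--(c) is sound: when $H$ is $\SL$, $\Sp$, or $\SO$, the argument $q=cf^m$ with $m\deg f=2$ forcing $m=1$ is exactly right. However, two points need attention.

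First, you are missing the central technical device of the paper's proof, namely Lemma~\ref{lem:unipotent}: if $k[V]^G$ and $k[V]^H$ have the same transcendence degree, then $GH_v$ is dense in $H$ for generic $v$, so the identity component of $H_v$ cannot be unipotent (Lemma~\ref{lem:spherical}). Combined with Theorem~\ref{generic}, this immediately rules out $\dim V > \dim H$, reducing the ``long finite sweep'' to the representations tabulated in \cite{luebeck}, and it also yields the quantitative bound $\dim G + \dim k[V]^H \ge \dim V$ (the paper's display~\eqref{dim.bound}) that disposes of almost every remaining entry in Seitz's Table~1 in a line or two. Your substitute --- computing $\dim k[V]^G = \dim V - \dim G$ via~\eqref{dim.eq} --- only works in characteristic $0$, since Theorem~\ref{generic} in positive characteristic only asserts that $G_v^\circ$ is \emph{unipotent}, not trivial, so $\dim G_v$ may be positive and the formula fails. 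Without Lemma~\ref{lem:unipotent} and the bound it produces, the sweep you describe would not be merely long; it would lack an effective way to compute or even bound $\dim k[V]^G$ and $\dim k[V]^H$ in the small characteristics where the genuine coincidences live.

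Second, the reduction ``by induction on $\dim H - \dim G$ to the case where $G$ is maximal in $H$'' does not go through as stated: the intermediate subgroup $M$ with $G < M < H$ guaranteed by Proposition~\ref{over} is semisimple but need not be simple, so the theorem's hypotheses do not apply to the pair $(M,H)$. The paper avoids this by noting that tensor indecomposability of $V$ for $G$ forces $H$ itself to be simple and then applying Seitz's Theorem 1 directly to the pair $(G,H)$; the semisimple non-simple case is treated separately in Proposition~\ref{ss}. If you insist on the inductive reduction, you would need to fold Proposition~\ref{ss} into the induction, which is doable but is genuinely extra work not present in your sketch.
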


The case $k = \C$ of the theorem, under the additional hypothesis that $k[V]^G = k[V]^H$, was previously investigated in \cite{Solomon:irredsame} and \cite{Schwarz:lmpi}.   Of course, in the positive characteristic case, we can always replace $V$ by a Frobenius twist (this does change the module
but not the subgroup of $\SL(V)$).  We will ignore this distinction in what follows.  In particular, if $V$ is tensor indecomposable, we will assume
that $V$ is restricted. 

 We remark that in cases \eqref{di.SL}--\eqref{di.table}, we do have $k[V]^G = k[V]^H$. 

In our theorem, we find a fortiori that in almost all of the examples with $\dim k[V]^G = \dim k[V]^H$, the rings of invariants are polynomial rings.  Such representations have been called variously regular, coregular, or cofree; they have been classified in characteristic zero in the papers \cite{KPV}, \cite{Schwarz:regular}, etc., cf.~the books \cite{PoV} or \cite{Popov:GGSO}.  The proofs below do not rely on the full strength of those results, but rather only on the determination of those representations with $\dim k[V]^G \le 1$, which can be found  in \cite{SK} or \cite{Kac:nil} in case $k = \C$ or in \cite{GLMS} for arbitrary $k$, see \S\ref{dim.sec}.
Note that if $k$ does not have characteristic $2$, then all examples have $k[V]^G$ of
dimension at most $1$. 

We start with some lemmas.

\begin{lem}  \label{lem:spherical}  Let $k$ be an algebraically closed field with $\Y$
a connected reductive group over $k$.  Suppose that $\X$ is a proper
reductive subgroup of $\Y$ and $U$ is a connected unipotent subgroup
of $\Y$.  Then  $\X U$ is not dense in $\Y$.
\end{lem}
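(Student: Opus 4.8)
The plan is to argue by contradiction. Assuming $\X U$ is dense in $\Y$, I would produce a nonconstant regular function on $\Y$ that is invariant under $\X$ acting by left translation and under $U$ acting by right translation — which is impossible once $\X$ has a dense orbit on $\Y/U$. (A pure dimension count does not suffice here: the map $\X\times U\to\Y$ always has all nonempty fibres isomorphic to $\X\cap U$, so density amounts to the "excess" equality $\dim(\X\cap U)=\dim\X+\dim U-\dim\Y$, which small cases such as $Sp_4<SL_4$ show cannot be ruled out by dimensions alone.)

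First I would note that $\X U$ is exactly the preimage in $\Y$ of the $\X$-orbit of the base point under the quotient map $\pi\colon\Y\to\Y/U$; since $U$ is connected unipotent, $\Y/U$ is a quasi-affine variety, $\pi$ is open, and $\X U$ is stable under right translation by $U$, so $\X U$ is dense in $\Y$ if and only if that $\X$-orbit is dense in $\Y/U$. A dense $\X$-orbit forces every regular $\X$-invariant function on $\Y/U$ to be constant on it, hence constant; that is, $\bigl(k[\Y]^U\bigr)^{\X}=k$.

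To contradict this, choose a Borel subgroup $B$ of $\Y$ with $U\le R_u(B)$, so that $k[\Y]^{R_u(B)}\subseteq k[\Y]^U$ as left $\Y$-modules. By the classical description of the ring of covariants — the Peter--Weyl decomposition when $\car k=0$, and in general the identification of $k[\Y/R_u(B)]$ with $\bigoplus_{\lambda}\nabla(\lambda)$, each dual Weyl (induced) module occurring exactly once — one gets $\bigl(k[\Y]^{R_u(B)}\bigr)^{\X}=\bigoplus_{\lambda}\nabla(\lambda)^{\X}$. It therefore suffices to exhibit a nonzero dominant weight $\lambda$ with $\nabla(\lambda)^{\X}\ne 0$. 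Here is where "$\X$ proper reductive" enters: since $\X$ is reductive, $\Y/\X$ is affine and of positive dimension, so $k[\Y]^{\X}=k[\Y/\X]=\mathrm{Ind}_{\X}^{\Y}k$ is a left $\Y$-module strictly larger than $k=\nabla(0)$. In characteristic $0$ complete reducibility immediately yields a nontrivial irreducible $\Y$-constituent with $\X$-fixed vectors. In general, $\mathrm{Ind}_{\X}^{\Y}k$ has a good filtration (induction from a reductive subgroup preserves good filtrations), so some $\nabla(\mu)$ with $\mu\ne 0$ occurs in it; its multiplicity is $\dim\mathrm{Hom}_{\Y}\bigl(\Delta(\mu),\mathrm{Ind}_{\X}^{\Y}k\bigr)=\dim\mathrm{Hom}_{\X}\bigl(\Delta(\mu),k\bigr)=\dim\bigl(\Delta(\mu)^{*}\bigr)^{\X}$ by Frobenius reciprocity, and since $\Delta(\mu)^{*}\cong\nabla(-w_0\mu)$ with $-w_0\mu\ne 0$, we obtain $\nabla(-w_0\mu)^{\X}\ne 0$, the desired contradiction.

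The routine points — openness of $\pi$, quasi-affineness of $\Y/U$, and the reduction from a disconnected $\X$ to $\X^{\circ}$ — are straightforward. The step carrying the real content is the last one: turning "$\X$ is a proper reductive subgroup" into "some nontrivial induced module of $\Y$ carries an $\X$-fixed vector." In characteristic $0$ this is transparent; the only genuinely technical ingredient in positive characteristic is that $\mathrm{Ind}_{\X}^{\Y}$ of a module with a good filtration again has a good filtration (which uses that $\Y/\X$ is affine because $\X$ is reductive), so that the Frobenius-reciprocity multiplicity computation applies.
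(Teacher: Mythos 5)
Your approach is genuinely different from the paper's. The paper's argument is in fact a dimension count, but a sharper one than the naive one you dismiss: assuming $\X U$ dense, it first passes to $U$ a maximal connected unipotent subgroup, uses that $\Y/\X$ is affine (Richardson) so that every $U$-orbit there is closed (Kostant--Rosenlicht), and concludes $\Y = \X U$; this equality forces $(\X \cap U)^\circ$ to be a maximal connected unipotent subgroup of $\X$, and then the identity $\dim G = \rank G + 2\dim(\text{max.\ conn.\ unip.})$ gives $\dim \X U = \rank\X + \dim V + \dim U < \rank \Y + 2\dim U = \dim\Y$. So the ``excess'' you worry about \emph{is} pinned down once one knows $\Y = \X U$. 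Your route, by contrast, is representation-theoretic: reduce to showing $\bigl(k[\Y]^{R_u(B)}\bigr)^{\X} \ne k$, use $k[\Y]^{R_u(B)} \cong \bigoplus_\lambda \nabla(\lambda)$, and produce a nonzero $\lambda$ with $\nabla(\lambda)^{\X}\ne 0$ from $k[\Y/\X]\ne k$ via Frobenius reciprocity. That reduction is correct and the char-$0$ case is fine.

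The one step I would push back on is the assertion that ``induction from a reductive subgroup preserves good filtrations.'' Unwinding it, what you need is that $k[\Y/\X]=\mathrm{Ind}_{\X}^{\Y}k$ has a good $\Y$-filtration; since $\mathrm{Ind}_{\X}^{\Y}$ is exact (because $\Y/\X$ is affine), by the Ext-vanishing criterion this is equivalent to $H^i(\X,\nabla_{\Y}(\mu)|_{\X})=0$ for all $i>0$ and all dominant $\mu$, i.e., to restriction $\mathrm{Res}_{\X}^{\Y}$ preserving good filtrations. That is a theorem of Donkin--Mathieu for Levi subgroups and is known in various other special cases, but it is not a general-purpose fact for an arbitrary reductive $\X \le \Y$; as stated, the step needs a citation or a restriction on $\X$, and I do not believe the fully general statement is in the literature.

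Fortunately the good-filtration machinery is unnecessary. You only need: there is a nonzero dominant $\mu$ with $\mathrm{Hom}_{\Y}\bigl(\Delta(\mu),\mathrm{Ind}_{\X}^{\Y}k\bigr)\ne 0$, since then Frobenius reciprocity gives $\nabla(-w_0\mu)^{\X}\ne 0$ directly. To get this, pick a finite-dimensional $\Y$-submodule $W$ of $k[\Y/\X]$ with $\dim W>1$ (possible because $\Y/\X$ is positive-dimensional and $k[\Y/\X]$ is a union of such submodules). If every weight of $W$ were zero, then all root subgroups, hence all of $\Y$, would act trivially, forcing $\dim W^{\Y}>1$, contradicting $(k[\Y/\X])^{\Y}=k$. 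So $W$ has a nonzero weight, hence a nonzero maximal weight $\mu$; the corresponding maximal vector is a $B^{+}$-highest weight vector of weight $\mu$, which yields a nonzero map $\Delta(\mu)\to W\subseteq \mathrm{Ind}_{\X}^{\Y}k$, as required. With this replacement your proof is complete and self-contained, at roughly the same level of machinery as the paper's argument but along a different axis.
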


\begin{proof}
 Suppose that $\X U$ is dense in $\Y$; we may assume that $U$ is a maximal connected unipotent subgroup, i.e., the unipotent radical of a Borel subgroup.
By \cite[Th.~A]{Richardson:Matsushima}, $\Y/\X$ is an affine variety, hence
every orbit of $U$ on $\Y/\X$ is closed (as follows  from the Lie--Kolchin Theorem).
So if $U$ has a dense orbit on $\Y/\X$, it has only one orbit, hence $\Y = \X U$.

Then $(U \cap \X)^\circ$ is a \emph{maximal} connected unipotent subgroup of $\X$.  Indeed,
let $V$ be a maximal connected unipotent subgroup of $\X$; so
$V \le U^g$ for some $g \in \Y$.  However, $\Y=\X U$ implies
that $V=\X \cap U^g$ is conjugate in $\X$ to $\X \cap U$.

Thus $\dim \X U = \dim \X + \dim U - \dim V
= \rank(\X) + \dim V + \dim U < \rank(\Y) + 2 \dim U = \dim \Y$.
\end{proof}

 \begin{lem} \label{lem:unipotent}
 Suppose that $\X < \Y < \SL(V)$ where $G$ and $H$ are connected reductive.  If $k[V]^\X$ and $k[V]^\Y$ have the same transcendence
 degree,  then  for generic $v \in V$,  $\X\Y_v$ is dense in $\Y$ and the identity
 component of $\Y_v$ is not unipotent.
 \end{lem}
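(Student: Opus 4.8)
The plan is to show two things: that the image of the orbit map $\Y \to \Y/\X$ restricted to $\Y_v$ is essentially everything (i.e.\ $\X\Y_v$ is dense), and then that $\Y_v^\circ$ cannot be unipotent. Throughout, $v$ is a generic point of $V$, and I may use the results of Section \ref{remind.sec} relating orbit dimensions, transcendence degrees, and stabilizer dimensions; in particular \eqref{dim.eq} for semisimple groups and the fiber dimension theorem. The equal transcendence degree hypothesis translates, via the orbit-dimension formula $\trdeg_{/k} k(V)^G = \dim V - \dim Gv$ applied to both $\X$ and $\Y$, into the numerical identity $\dim \X v = \dim \Y v$ for generic $v$. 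Since $\X v \subseteq \Y v$ and $\Y v$ is irreducible, this forces $\overline{\X v} = \overline{\Y v}$, so the generic $\X$-orbit is dense in the generic $\Y$-orbit.

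First I would deduce the density of $\X \Y_v$ in $\Y$. Consider the orbit map $\pi\!: \Y \to \Y v$, $g \mapsto gv$, whose fibers are the cosets of $\Y_v$. The preimage $\pi^{-1}(\X v) = \{ g \in \Y \mid gv \in \X v \}$ is exactly $\X \Y_v$ (if $gv = xv$ with $x \in \X$ then $x^{-1}g \in \Y_v$, so $g \in \X\Y_v$; the reverse inclusion is immediate). Since $\X v$ is dense in $\Y v$ and $\pi$ is a dominant morphism of varieties, $\pi^{-1}(\X v)$ is dense in $\Y$; hence $\X\Y_v$ is dense in $\Y$. (One should take $v$ generic enough that $\X v$ is of maximal dimension and that $v$ lies in the common open set $U$ from Section \ref{remind.sec} for both $\X$ and $\Y$.)

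Second, I would rule out $\Y_v^\circ$ unipotent by a counting/genericity argument: if $\Y_v^\circ$ were unipotent, then since $\X$ is reductive, the preceding step together with Lemma \ref{lem:spherical} yields a contradiction. Concretely: $\X\Y_v$ dense in $\Y$ means $\dim \X + \dim \Y_v - \dim(\X \cap \Y_v) = \dim \Y$, and after passing to identity components and using that $\X$ is reductive while $\Y_v^\circ$ is (by hypothesis) a connected unipotent subgroup $U$ of $\Y$, one obtains that $\X U$ is dense in $\Y$ with $\X$ a proper reductive subgroup and $U$ connected unipotent --- exactly the situation forbidden by Lemma \ref{lem:spherical}. (If $U = 1$ the statement is vacuous in the wrong direction, so note $\X \ne \Y$ since $\X$ is a proper subgroup, and if $\Y_v^\circ$ were trivial there is nothing to prove about it being unipotent --- but then $\X\Y_v$ dense and $\Y_v$ finite would force $\dim \X = \dim \Y$, and combined with $\X < \Y$ reductive this is handled separately; more carefully, the claim is only that $\Y_v^\circ$ is \emph{not} unipotent, and a nontrivial connected unipotent $\Y_v^\circ$ is excluded by Lemma \ref{lem:spherical} as above while the trivial case must be excluded by noting $\dim\X < \dim\Y$ together with the density forces $\dim \Y_v > 0$.)

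The main obstacle is the last point: one needs $\dim \Y_v > 0$ to even have something whose identity component could be unipotent, and this requires knowing $\dim \X < \dim \Y$ (strictly), which follows from $\X < \Y$ being a proper containment of connected reductive groups of possibly different dimension --- but one must be slightly careful that properness as algebraic groups gives a strict dimension inequality, which holds since a proper closed connected subgroup of a connected group has strictly smaller dimension. Granting that, density of $\X\Y_v$ forces $\dim\Y_v = \dim\Y - \dim\X + \dim(\X\cap\Y_v) \ge \dim\Y - \dim\X > 0$, so $\Y_v^\circ \ne 1$, and then Lemma \ref{lem:spherical} applied to $\X$ and $U = \Y_v^\circ$ (were it unipotent) delivers the contradiction, completing the proof.
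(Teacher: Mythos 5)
Your proof is correct and follows essentially the same route as the paper's: equal transcendence degrees give $\X v$ dense in $\Y v$ for generic $v$, hence $\X\Y_v$ dense in $\Y$, and Lemma~\ref{lem:spherical} then rules out a unipotent $\Y_v^\circ$. The separate handling of the case $\Y_v^\circ = 1$ at the end is unnecessary: Lemma~\ref{lem:spherical} already covers $U = 1$, since its proof replaces $U$ by a maximal connected unipotent subgroup and, in any case, $\X \cdot 1 = \X$ cannot be dense because $\X$ is a proper closed subgroup of the connected group $\Y$.
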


 \begin{proof}  
 The transcendence degree of $k[V]^\Y$ is the codimension of the highest
 dimensional orbit of $\Y$ on $V$.  Thus, for $v$ generic
 $\X v$ is dense in $\Y v$, whence $\X\Y_v$ is dense in $\Y$.  Now by
 Lemma \ref{lem:spherical}, this cannot happen if the identity component of
 $\Y_v$ is unipotent.
\end{proof}
 
\begin{lem} \label{decomp} 
Suppose that $\X < \Y < \SL(V)$ and $k[V]^\X$ and $k[V]^\Y$ have the same transcendence
 degree.   Assume further that $\X$ is a simple algebraic group and that $V$ is a tensor 
 decomposable  irreducible  $\X$-module.  Then $\X=\SL(W)$, up to a twist
 $V = W \otimes W' $ where $W'$ is a (nontrivial) Frobenius twist of $W$ or $W^*$,
and  $\Y= \SL(W) \otimes \SL(W)$.
 \end{lem}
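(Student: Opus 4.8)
The plan is to combine Steinberg's tensor product theorem with Theorem~\ref{generic} and Lemma~\ref{lem:unipotent} to localize $H$, and then to run a projection argument inside a tensor-product overgroup. Write $V = V_1 \otimes \cdots \otimes V_m$ with $m \ge 2$ and each $V_i$ a nontrivial Frobenius twist of a restricted irreducible $G$-module (Steinberg), and put $d_i = \dim V_i \ge 2$. Since the least dimension $d$ of a nontrivial irreducible $G$-module satisfies $d^2 > \dim G$, we have $\dim V \ge d_1 d_2 > \dim G$, so Theorem~\ref{generic} applies: for generic $v$, the identity component of $G_v$ is unipotent. Because $k[V]^G$ and $k[V]^H$ have the same transcendence degree, Lemma~\ref{lem:unipotent} shows that for generic $v$ the product $G H_v$ is dense in $H$ while the identity component of $H_v$ is \emph{not} unipotent; in particular $G \ne H$. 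As $H$ contains $G$ it acts irreducibly on $V$, hence is reductive, and its central torus acts by scalars, hence is trivial since $H$ acts faithfully, so $H$ is semisimple; and since $\dim k[V]^H = \dim k[V]^{H^\circ}$ we may replace $H$ by $H^\circ$.

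Next I would show that $H$ may be taken inside a tensor-product subgroup $\SL(A) \otimes \SL(B)$ for a factorization $V = A \otimes B$ obtained by partitioning $\{V_1,\dots,V_m\}$ into two nonempty blocks. Embed $G$ in $K := \SL(V_1) \otimes \cdots \otimes \SL(V_m)$ and let $M$ be a maximal closed connected subgroup of $\SL(V)$ containing $H$; by the classification of maximal subgroups of $\SL_n$, $M$ is $\SO(V)$, $\Sp(V)$, a tensor-product subgroup, a tensor-induced subgroup, or one of finitely many almost-simple ``$S$-type'' subgroups. If $M$ is orthogonal or symplectic, then $V$ is self-dual for $G$, which by the shape of the Steinberg decomposition forces either $m = 2$ with the two factors interchanged by duality (hence $d_1 = d_2$, $G = \SL(V_1)$, and $H \le \SO(V)$ or $\Sp(V)$, to be handled below), or a case with $\dim k[V]^G \le 1$, where Proposition~\ref{few} and Tables~\ref{dim0}--\ref{dim1} leave only $(G,V) = (A_1, L(\la_1 + p^i\la_1))$, for which one checks $H \cong \SL_2 \otimes \SL_2 \cong \SO_4$ directly. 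The tensor-induced and $S$-type cases are excluded by comparing the value of $\dim k[V]^G$ --- computed from the Steinberg factors via Lemma~\ref{twisted} and Theorem~\ref{generic} --- with the much smaller invariant rings that Seitz's tables force on the relevant overgroups, so that $\dim k[V]^G = \dim k[V]^H$ fails; alternatively one notes that such $M$ cannot contain a simple $G$ with $V|_G$ tensor decomposable. This leaves $M$, hence $H$, a tensor-product subgroup $\SL(A) \otimes \SL(B)$.

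With $H \le \SL(A) \otimes \SL(B)$ and $V = A \otimes B$, uniqueness of tensor factorizations of irreducible modules makes $A|_G$ and $B|_G$ irreducible $G$-modules, and the restrictions of $V$ and hence of $A$, $B$ to $H$ are irreducible. Consider the projections $\pi_A \!: H \to \PGL(A)$ and $\pi_B \!: H \to \PGL(B)$, whose images contain the simple, irreducibly acting groups $\pi_A(G)$, $\pi_B(G)$. If $\pi_A(H) = \pi_A(G)$, then $N := (\ker \pi_A|_H)^\circ$ is a nontrivial connected semisimple normal subgroup of $H$ with $\dim N = \dim H - \dim G > 0$; writing $H$ up to isogeny as $N \times M'$ with $M'$ centralizing $N$ and isogenous to $G$, and using irreducibility of $B|_H = B_N \otimes B_{M'}$ together with the fact that $G$ is a diagonal simple subgroup of $N \times M'$, one finds that $B|_G$ splits off a tensor factor coming from $N$; together with $A|_G$ this exhibits $V|_G$ with a finer decomposition than the balanced two-factor one, and an induction on $\dim V$ (the pairs $(\pi_B(G),\pi_B(H))$ on $B$, and the analogous data on $A$, being smaller instances of the lemma once one tracks the invariant-dimension equality) shows this is impossible. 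Hence $\pi_A(H) \supsetneq \pi_A(G)$, and symmetrically $\pi_B(H) \supsetneq \pi_B(G)$. A final dimension count --- strict growth in both factors, with $\dim k[V]^H = \dim k[V]^G$ and the invariant ring of a balanced tensor product $\SL(A) \otimes \SL(A)$ on $A \otimes A$ being $1$-dimensional --- forces $m = 2$, $\dim A = \dim B = \dim W$, $\pi_A(H) = \PGL(A) = \pi_B(H)$ and $\pi_A(G) = \PGL(A)$ already, i.e.\ $G = \SL(W)$, $V = W \otimes W'$ with $W'$ a nontrivial Frobenius twist of $W$ or $W^*$ (so that $W \otimes W'$ is irreducible), and $H = \SL(W) \otimes \SL(W)$, as in Example~\ref{twisted.eg}.

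The hard part is the localization in the second paragraph --- in particular ruling out the tensor-induced and $S$-type maximal subgroups and sorting out the self-dual tensor decomposable modules --- and, within the tensor-product case, making the induction bookkeeping precise: the real content is that enlarging $G$ along a tensor factor strictly shrinks the ring of invariants unless we are in the balanced two-factor configuration $W \otimes W'$.
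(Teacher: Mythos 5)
Your approach is genuinely different from the paper's and, as written, has gaps at both of its crucial stages. The paper does not localize $\Y$ inside a maximal subgroup of $\SL(V)$ at all: instead, after using Theorem~\ref{generic} together with Lemma~\ref{lem:unipotent} to rule out $\Y$ simple with $V|_\Y$ tensor decomposable, and Seitz's Table~1 to rule out $\Y$ simple tensor indecomposable (other than $\SL$, $\SO$, $\Sp$, which are disposed of via Tables~\ref{dim0}--\ref{dim1} since then $\dim k[V]^\Y \le 1$), it reduces by an easy induction to the case where $\X$ is \emph{maximal in $\Y$}. For $\Y$ semisimple but not simple, a simple maximal connected subgroup must be a full diagonal, so $\Y$ is forced to be a central product of exactly two copies of $\X$ with $\X$ embedded diagonally, and then $V = W_1 \otimes W_2$ and a direct computation of $\dim \pi_2(\Yt_v)$ for generic $v$ rules out $\dim W_1 < \dim W_2$ and $\X$ proper in $\SL(W_1)$. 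This structural shortcut bypasses the entire Aschbacher/Seitz classification of maximal subgroups of $\SL(V)$.

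The two gaps in your version are these. First, your exclusion of the tensor-induced and ``$S$-type'' maximal subgroups $M$ is only asserted: the proposed comparison of $\dim k[V]^G$ with ``the invariant rings of the relevant overgroups'' compares $G$ with $M$, but what you need to control is $\Y$, which may be strictly smaller than $M$, so that comparison does not by itself rule anything out; and your ``alternative'' claim that an $S$-type $M$ cannot contain a tensor-decomposable simple $G$ is, in substance, the very statement of the lemma in the special case $\Y = M$ simple, so asserting it without proof is circular (it is precisely what the paper's simple-$\Y$ cases establish via Theorem~\ref{generic} and Seitz's Table~1). Second, within the tensor-product case the projection-and-induction argument is only sketched: the passage from $\pi_A(H) = \pi_A(G)$ to an impossibility via a ``finer decomposition'' of $V|_G$ and ``induction on $\dim V$'' is not set up precisely, nor is the invariant-dimension bookkeeping across the factors, and the final claim that strict growth in both projections together with $\dim k[V]^H = \dim k[V]^G$ forces $m=2$, $\dim A = \dim B$, and $\pi_A(G) = \PGL(A)$ does not follow from what precedes it. Also, your dichotomy in the orthogonal/symplectic case is muddled: the first alternative (``$m=2$ with factors interchanged by duality, hence $G = \SL(V_1)$'') is not a valid deduction; what actually works is noting that $k[V]^H$ has dimension $\le 1$, hence so does $k[V]^G$, and consulting Tables~\ref{dim0} and~\ref{dim1} for tensor-decomposable entries, which leaves only $(A_1, L(\la_1 + p^i\la_1))$. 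You should re-read the paper's argument, especially the central product observation when $\X$ is maximal in $\Y$, which is the key idea that makes the proof short.
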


 \begin{proof}   
  Note that $\Y$ is semisimple and $\dim V$ is not prime.   Suppose that $\Y$ is simple
  and   $V$ is tensor decomposable for $\Y$, then 
  it follows by Theorem  \ref{generic} that the generic stabilizer in $\Y$ of an element of
  $V$ has unipotent identity component, contradicting Lemma \ref{lem:unipotent}.
  
  If $\Y$ is simple and $V$ is tensor indecomposable for $\Y$, then 
  by \cite[Th.~1]{seitzmem},  $\Y$ is $\SL(V)$, $\SO(V)$, or $\Sp(V)$. 
  in particular, $k[V]^{\Y} =k$ or $k[q]$ with $q$ quadratic, so
  $\dim k[V]^{\X} \le 1$ as well.  On the other hand, as we have already
  noted, $\dim V > \dim \X +1$ unless $\X=\SL(W)$
  and $V = W \otimes W'$ where $W'$ is a twist of $W$ or $W^*$.
  In that case $k[V]^{\X} =k[f]$ with $f$ of degree $\dim W$.
  Thus,  $\dim V =4$ and $\Y=\SO(V)$ is tensor decomposable. 
  
  So we may assume that $\Y$ is not simple. 
    
    First consider the case that $\X$ is maximal in $\Y$. 
  It  follows that $\Y$ is a central product of two copies of $\X$
  and $\X$ embeds diagonally in $\Y$.   For convenience we consider
  $\Yt = \X \times \X$.  Let $\pi_i$ denote the projection onto the $i$-th  factor.

   Thus,  $V= W_1 \otimes W_2$
  where the first copy of $\X$ acts trivially on $W_2$ and the second copy
  acts trivially on $W_1$.   Assume that $\dim W_1 \le \dim W_2$.
    A straightforward computation shows that
  for a generic $v \in V$, $\pi_2(\Yt_v)$ is an injection into $\SL(W_2)$.
  Moreover,
  if $\dim W_1 < \dim W_2$,  the $\pi_2(\Yt_v)$ is proper in $\X$.  In particular,
  $\dim \Y_v < \dim \X$, whence $\dim (\X\Y_v) < 2 \dim \X = \dim \Y$.  Thus
  $\dim \X v < \dim \Y v$ and so $\dim k[V]^\X > \dim k[V]^\Y$.

  If $\dim W_1= \dim W_2$, we can view 
  $\X \le \SL(W)$ and $\X \le \SL(W) \le \Y \le Y:=\SL(W) \times \SL(W) < \SL(V)$.
  Note that if $v$ is a generic point of $V$, then $Y_v$ is a diagonal subgroup
  of $Y$.  If $\X$ is proper in $\SL(W)$, then $\dim \Y_v$ is generically less than
  $\dim \X$ since the intersection of $\Y$ with a generic $Y_v$ will not be a full
  diagonal subgroup of $\Y$.   In that case $\dim \X + \dim \Y_v < \dim \Y$,
  whence $\dim k[V]^\X > \dim k[V]^\Y$.
  
  If $\X = \SL(W)$, then (since $V$ is irreducible) we see that $V$ is a twist
  of $W \otimes W'$ where $W'$ is a Frobenius twist of $W$ or $W^*$
  as allowed in the conclusion.

  If $\X$ is not maximal in $\Y$, then we can choose $Y$ with
  $\X < Y < \Y$ with $\X$ maximal in $Y$, whence by induction
 we are in the one case allowed.   Then $Y$ is maximal in 
 $\SL(V)$ \cite[Th.~3]{seitzmem}, a contradiction. 
  \end{proof}
  
  We need one more preliminary result.
  
  \begin{lem} \label{lem:spso}  Let $G=\SL(W)$ with $\dim W = 2m > 2$.
  Then $\SO(W)\,g\Sp(W)$ is not dense in $G$ for any $g \in G$.
  \end{lem}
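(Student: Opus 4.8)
The plan is to reduce the statement to a lower bound on the dimension of the group $\SO(W) \cap g\Sp(W)g^{-1}$, and then to obtain that bound by decomposing $W$ under a suitable operator. I would first consider the multiplication morphism $\mu \colon \SO(W) \times \Sp(W) \to \SL(W)$, $(s,p) \mapsto sgp$, whose image is $\SO(W)g\Sp(W)$. Over a point $s_0gp_0$ of the image the fiber $\mu^{-1}(s_0gp_0)$ consists of the pairs $(s_0h,\, (g^{-1}h^{-1}g)p_0)$ with $h \in \SO(W)\cap {}^g\Sp(W)$, so every nonempty fiber is isomorphic to the group $D := \SO(W)\cap {}^g\Sp(W)$; hence $\dim \SO(W)g\Sp(W) = \dim\SO_{2m} + \dim\Sp_{2m} - \dim D = 4m^2 - \dim D$. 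Since $\SL_{2m}$ is irreducible of dimension $4m^2 - 1$ and contains the image, necessarily $\dim D \ge 1$, and the image is dense in $\SL(W)$ exactly when $\dim D = 1$. So it suffices to prove $\dim D \ge 2$ whenever $2m > 2$.

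To bound $\dim D$, assume first $\car k \ne 2$. Fix the nondegenerate symmetric bilinear form $Q$ with $\SO(W) = \SO(Q)$ and the nondegenerate alternating form $\omega$ with ${}^g\Sp(W) = \Sp(\omega)$, and let $T \in \GL(W)$ be the invertible operator defined by $\omega(v,w) = Q(Tv,w)$. Because $\omega$ is alternating and $Q$ symmetric, $T$ is skew-adjoint for $Q$, i.e.\ $T \in \so(Q) = \Lie\SO(W)$; and a routine computation (using that elements of $\OO(Q)$ are $Q$-isometries) shows that an element of $\OO(Q)$ preserves $\omega$ if and only if it commutes with $T$. Thus $D$ has finite index in $C_{\OO(Q)}(T)$, so $\dim D = \dim C_{\OO(Q)}(T)$.

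Next I would decompose $W$ using the additive Jordan decomposition $T = T_s + T_n$. Since $T^\ast = -T$ is conjugate to $T$, the spectrum of $T$ is stable under $\lambda \mapsto -\lambda$, which lets one choose the polynomial interpolating $T_s$ to be odd; then $T_s^\ast = -T_s$ and $T_n^\ast = -T_n$, so both lie in $\so(Q)$. As $T$ is invertible, $0$ is not an eigenvalue, and $Q$ pairs the generalized eigenspace $U_\lambda$ nondegenerately with $U_{-\lambda}$, giving an orthogonal decomposition $W = \bigoplus_{\{\lambda,-\lambda\}} (U_\lambda \oplus U_{-\lambda})$ into nondegenerate subspaces. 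Consequently $C_{\OO(Q)}(T_s) \cong \prod_{\{\lambda,-\lambda\}} \GL(U_\lambda)$, and imposing commutation with $T_n$ (the action on $U_{-\lambda}$ being forced by that on $U_\lambda$, since $T_n \in \so(Q)$) yields $C_{\OO(Q)}(T) \cong \prod_{\{\lambda,-\lambda\}} C_{\GL(U_\lambda)}(T_n|_{U_\lambda})$. Since the centralizer in $\GL(U)$ of any endomorphism has dimension at least $\dim U$, we get $\dim D = \dim C_{\OO(Q)}(T) \ge \sum_{\{\lambda,-\lambda\}} \dim U_\lambda = \tfrac12 \dim W = m \ge 2$, as required.

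Finally, in characteristic $2$ one runs the analogue of this argument: every isometry of $Q$ preserves the (nondegenerate alternating) polarization of $Q$, so $\SO(W)$ sits inside a symplectic group, and $D$ is cut out inside an orthogonal group by commutation with the self-adjoint operator relating two symplectic forms; decomposing $W$ into the (now necessarily even-dimensional, self-paired) eigenspaces of that operator again produces $\dim D \ge m \ge 2$. Alternatively, the inclusion $\SO(W) \subseteq \Sp(W_0)$ reduces the claim to showing that $\Sp(W_0)\, a\, \Sp(W)$ is never dense in $\SL(W)$, which follows by the same dimension count together with the fact that the intersection of any two symplectic subgroups of $\GL_{2m}$ has dimension strictly larger than $2m+1$ when $m \ge 2$. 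The step I expect to be the main obstacle is precisely this uniform lower bound on $\dim D$ — establishing that it holds for every $g$ and, in particular, that it survives the degeneracies present in characteristic $2$.
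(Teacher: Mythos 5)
Your characteristic $\ne 2$ argument is correct, and it is in fact a bit more robust than the paper's. The paper identifies $\SO(W)\cap{}^g\Sp(W)$ with $C_G(\tau,\tau J)$, takes $J$ \emph{generic} (regular semisimple skew-symmetric), and computes the dimension of $\OO(W)\cap C_G(J)$ explicitly (it is $m$ — note the paper's printed value ``$m-1$'' is off by one; the fixed locus of $\tau$ in the torus $k[J]^\times$ is $\{\mathrm{diag}(t_1,t_1^{-1},\dots,t_m,t_m^{-1})\}$, which is $m$-dimensional and already in $\SL$). The paper then relies, implicitly, on upper semicontinuity of $\dim D$ in $g$ to pass from generic $g$ to all $g$. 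You sidestep that by bounding $\dim D$ for \emph{every} $g$ directly: you replace $J$ by the skew-adjoint operator $T$ with $\omega(v,w)=Q(Tv,w)$, take its Jordan decomposition (observing $T_s^\ast=-T_s$, $T_n^\ast=-T_n$), split $W$ into $Q$-hyperbolic pairs $U_\lambda\oplus U_{-\lambda}$ (using that $T$ is invertible so $0$ is not an eigenvalue), and get
\[
\dim D = \dim C_{\OO(Q)}(T) = \sum_{\{\lambda,-\lambda\}} \dim C_{\GL(U_\lambda)}\bigl(T_n|_{U_\lambda}\bigr) \ge \sum_{\{\lambda,-\lambda\}}\dim U_\lambda = m \ge 2.
\]
This is a clean, complete argument in odd characteristic, and the individual steps (the identification $D\subseteq C_{\OO(Q)}(T)$, the oddness of the interpolating polynomial for $T_s$, the nondegenerate pairing of $U_\lambda$ with $U_{-\lambda}$, and the elementary bound $\dim C_{\GL(U)}(N)\ge\dim U$) all check out.

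The characteristic $2$ part, however, is genuinely incomplete, as you yourself flag. Your first route (``decomposing $W$ into the \dots eigenspaces of that operator again produces $\dim D\ge m$'') does not go through as stated: in characteristic $2$ the operator $T$ relating the polarization $b$ of $Q$ to $\omega$ is \emph{self-adjoint} for $b$, so the generalized eigenspaces of $T_s$ are $b$-orthogonal to each other rather than paired in hyperbolic pairs, and $D$ is cut out inside $\SO(Q)$ (not inside $\Sp(b)$ or a product of $\GL$'s) by commutation with $T$ — the bookkeeping that gave you a $\GL(U_\lambda)$ factor for each $\{\lambda,-\lambda\}$ pair does not transfer. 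Your second route is the right one — use $\SO(Q)\le\Sp(b)$ and bound $\dim\bigl(\Sp(b)\cap{}^g\Sp(\omega)\bigr)$ from below — but the key inequality you invoke (that this intersection has dimension $>2m+1$ for all $g$, in fact $\ge 3m$) is a nontrivial theorem, and you state it as a fact without proving or attributing it. This is exactly the point where the paper cites Goldstein--Guralnick \cite{GG}. So: right idea and right reduction in characteristic $2$, but the crucial lower bound is asserted rather than established.
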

  
  \begin{proof}  If $G$ is in characteristic $2$, then $\SO(W) \le \Sp(W)$.
  Hence by  \cite{GG},  $\dim (\Sp(W) \cap \,{^g\Sp(W)}) \ge 3m$, whence the result.
  In any other characteristic, we can take $\SO(W)=C_G(\tau)$ where
  $\tau$ is the inverse transpose map and ${^g}\Sp(W) = C_G(\tau J)$
  where $J$ is a skew-symmetric matrix.   Thus,
  $\SO(W)\, \cap\, {^g}\Sp(W) = C_G(\tau, \tau J)$.  Generically, $J$ will be
  a semisimple regular element and so this intersection is the centralizer of
  $\tau$ in $\SL(W) \cap k[J]^\times$, which has dimension $m-1$, whence the result.
  \end{proof}

 \begin{proof}[Proof of Theorem \ref{distinctinvariants}]
 If $V$ is tensor decomposable for $\X$, the result follows by Lemma \ref{decomp}; this case is in Table \ref{sameinv}.  So we
 assume that $V$ is tensor indecomposable
 for $\X$ (and so also for $\Y$). This forces $\Y$ to be a simple algebraic group as well.  
 
If $H = \Sp(V)$, then $(G, V)$ is symplectic and appears in Table \ref{dim0}, giving the claim in \eqref{di.Sp}.  As the representations in \eqref{di.SL} and \eqref{di.SO} have already been listed in Tables \ref{dim0} and \ref{dim1}, we assume that $H \ne \SL(V)$, $\Sp(V)$, $\SO(V)$, and Theorem 1 in \cite{seitzmem} asserts that $(G, H)$ appears in Table 1 in ibid.
 
 If $\dim V > \dim \Y$,
 then $\Y_v^\circ$ is unipotent for generic $v$, a contradiction by Lemma \ref{lem:unipotent}.
 So we may assume that $\dim V \le \dim \Y$, and therefore $V$ is one of the representations enumerated in \cite{luebeck}.

 Suppose that $V$ is the irreducible part of the adjoint
 module for $\Y$.  If $\dim V \ge \dim \Y - 2$, then $\X$ will not act irreducibly.  Otherwise, as in \cite{Hiss}, $\Y$ has type $B$, $C$, or $F_4$ and $p = 2$, or $\Y$ has type $G_2$ and $p = 3$.  Consulting Seitz's table, the only possibility for $\X$ is $\X = D_n$ in case $\Y = C_n$ and $p = 2$; but in this case the representation is a twist of the natural module and we have $k = k[V]^\Y \ne k[V]^\X$.

 We can exclude many cases by exploiting equation \eqref{dim.eq} and its analogue for $H$.  
  Combining these, we obtain:
 \begin{equation} \label{dim.bound}
 \dim \X  + \dim k[V]^\Y \ge \dim V.
 \end{equation}

\case{Exceptional groups} If $\Y$ has type $E_n$ for some $n$, then the only representations in \cite{luebeck} not already considered are the cases $\Y = E_6$ or $E_7$ and $V$ the minuscule representation of $\Y$ of dimension 27 or 56 respectively.  In either case, $\dim k[V]^\Y = 1$.  According to Seitz's table, the only possibility for $\X$ is $C_4$ and $p \ne 2$ (and $\Y = E_6$), in which case the restriction of $V$ to $C_4$ is $L(\omega_2)$ which has $\dim k[V]^{C_4} \ge 2$ by \cite{GLMS}.

 If $\Y \cong F_4$, then the only remaining choice for $V$ from \cite{luebeck} is that $\dim V = 26$ (resp., $25$ if $p=3$).
 Note that the identity component of $\Y_v$ for generic $v$ is $D_4$, so $\dim k[V]^\Y$ is 2 (resp., $1$ if $p = 3$).  By Seitz's table,
 the only possibilities for $\X$ are $G_2$ with $p=7$ (which is too small by \eqref{dim.bound}), or $D_4$ or $C_4$ with $p = 2$ (which are in Table \ref{sameinv}).

  If $\Y \cong G_2$, then $\dim V = 7$ (6 if $p = 2$) is the only remaining possibility, and $\dim k[V]^\Y \le 1$, hence $\dim \X \ge 5$.
  By \cite{seitzmem},
 the only possibility for $\X$ is $A_2$, with $p=3$ and $V$ the irreducible part of the adjoint representation of $\X$, as in Table \ref{sameinv}.

 \smallskip
  Thus, $\Y$ is a classical group of rank at least 2, and $V$ is not the natural module.

\case{$\dim k[V]^\Y \le 1$}
Comparing the tables in \cite{luebeck} and \cite{GLMS} shows that most of the possibilities for $V$ have $\dim k[V]^\Y \le 1$. (This includes such cases as $\Y = \SL_4(k)$ and $V = L(\la_1 + \la_2)$ for $p = 3$.)  Of the possibilities for $\X$ listed in Seitz's table, most fail condition \eqref{dim.bound}.
The only interesting cases are where $\Y$ has type $D$ and $V$ is a half-spin representation, or $\Y = \SL(W)$ and $V = \wedge^2 W$ or $L(2\la_1)$.

So suppose $\Y = \HSpin_n$ and $\X = \Spin_{n-1}$ for $n = 10$ or 14.  If $\car k \ne 2$, Igusa showed in \cite{Igusa} that $\dim k[V]^\X = \dim k[V]^\Y + 1$.
Without restriction on the characteristic, for $n = 14$, $\dim k[V]^\Y = 1$ by \cite[Prop.~6.2]{BGL}, whereas $\dim k[V]^\X \ge 2$ by \cite{GLMS}.  For $n = 10$ and $\car k = 2$, $k[V]^\Y = k$ by \cite[2.9]{Liebeck:affine}, whereas $k[V]^\X$ contains a quadratic form (by reduction from $\Z$).  So these cases do not occur.  The case $n = 12$ is in Table \ref{sameinv}.

Suppose $\Y=\SL(W)$ with $\dim W > 3$ and $V=L(\lambda_2)$. If $\dim W$
is odd, then $k[V]^{\Y} = k$ and there are no possibilities for $\X$ by
Table \ref{dim0}.   Suppose that $\dim W$ is even.   Then $\dim k[V]^H = 1$ and the  only semisimple maximal  subgroups $\X$
 with large enough dimension to possibly satisfy  \eqref{dim.bound} 
 are $\Sp(W)$ or $\SO(W)$.   Of course $\Sp(W)$ is not irreducible on $V$.
   It follows by Lemma \ref{lem:spso}  that $\X\SO(W)$ is not dense
 in $\SL(W)$. 

  Suppose $\Y = \SL(W)$ and $V = L(2 \lambda_1)$.
  As $V$ is restricted, we have $p \ne 2$.
 The generic stabilizer is $\SO(W)$.  There is no semisimple maximal subgroup
 $\X$  of $\Y$ such that $\X\SO(W)$ is dense in $\Y$, whence the result holds in this
 case (the only semisimple subgroup of sufficiently large dimension is $\Sp(W)$
 but  by Lemma \ref{lem:spso}   $\Sp(W)g\SO(W)$ is never dense in $\SL(W)$).

\case{Remaining cases}
We now mop up the remaining representations from \cite{luebeck}.
 Suppose $\Y =\Sp_{2m}(W)$ and $V=L(\lambda_2)$
 --- this is the irreducible part of $\wedge^2W$.  
 By Example \ref{SOSp.eg},  $\dim k[V]^{\Y} = m-1$ or $m-2$, depending upon whether
 $p$ divides $m$ or not.
 Thus, $\dim \X  \ge \dim V - (m-1)$ or $(m-2)$.  The only semisimple maximal
 subgroup of $\Sp(W)$ with such a dimension is $\SO(W)$ with $p=2$. 
 This case is in Table \ref{sameinv}.

 \smallskip
The tables in \cite{luebeck} leave only 
$V = L(\lambda_m)$ for $\Y = \Sp_{2m}(k)$ for $m = 3$ (all $p$), or 4, 5, or 6 ($p = 2$ only).
But for these cases, \cite{seitzmem} shows there are no irreducible simple algebraic subgroups.
\end{proof}

Here is another result  where we allow $\X$ to be semisimple.

\begin{prop}  \label{ss}
Suppose that $\X < \Y  \le \SL(V)$ with $\Y$ a simple algebraic group
and $\X$ a semisimple irreducible subgroup of $\SL(V)$.     If 
$\Y$ acts tensor indecomposably on $V$  and $\dim k[V]^\X = \dim k[V]^\Y$, 
then either $\X$ is simple, or $k[V]^\X=k$, or $\dim V =8$ with
$\X = \Sp_2 \times \Sp_4$ and $\Y = \SO_8(V)$. 
\end{prop}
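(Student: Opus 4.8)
The plan is to run the argument of Theorem~\ref{distinctinvariants} with $\X$ allowed to be a product of simple groups, bootstrapping on the classification of irreducible subgroups of simple groups \cite{seitzmem} and on the classification of groups with a dense orbit on $\PP(V)$ from \S\ref{dim.sec}. Assume $\X$ is not simple and $k[V]^\X\ne k$; we must then force $\dim V=8$, $\X=\Sp_2\times\Sp_4$, $\Y=\SO_8$. The opening move is identical to the one in Theorem~\ref{distinctinvariants}: since $\dim k[V]^\X=\dim k[V]^\Y$, Lemma~\ref{lem:unipotent} shows that $\X\Y_v$ is dense in $\Y$ and $\Y_v^\circ$ is not unipotent for generic $v$, and then Theorem~\ref{generic} (applicable since $\Y$ is simple and irreducible on $V$) forces $\dim V\le\dim\Y$. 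So $V$ is one of the modules tabulated in \cite{luebeck} and $\Y$ is a simple classical or exceptional group.

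Next I split on $\dim k[V]^\Y$. If $\dim k[V]^\Y=0$ then $\dim k[V]^\X=0$, so $k[V]^\X=k$, contrary to hypothesis; this disposes of $\Y=\SL(V)$ and $\Y=\Sp(V)$. The cases with $\dim k[V]^\Y>1$ are exactly those treated in the proof of Theorem~\ref{distinctinvariants} --- $V$ an irreducible part of an adjoint module in characteristic $2$ or $3$, together with a few orthogonal or symplectic modules --- and there the subgroups $\X$ furnished by \cite{seitzmem} are all simple, so none of these occur for us. Hence $\dim k[V]^\Y=1$, so $\dim k[V]^\X=1$: the group $\X$ has a dense orbit on $\PP(V)$. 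Being non-simple, $\X$ acts tensor decomposably on $V$, so $(\X,V)$ must be a \emph{non-simple} entry in the classification of semisimple groups with finitely many orbits on $\PP(V)$ --- \cite{SK} for $k=\C$ and \cite{GLMS} in general, as recalled in \S\ref{dim.sec}. I would then go through those non-simple entries, for each one using \cite{seitzmem} (if $\X$ is not maximal, via a maximal connected $M$ with $\X\le M<\Y$, for which $k[V]^\Y\subseteq k[V]^M\subseteq k[V]^\X$ forces $\dim k[V]^M=1$, and Theorem~\ref{distinctinvariants} applies when $M$ is simple) to find the simple overgroups $\Y$, and retaining only those with $\dim k[V]^\X=\dim k[V]^\Y$. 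The tensor-product subgroups $\Sp_{2a}\times\Sp_{2b}<\SO_{4ab}$ are the only non-simple ones that land in an orthogonal group and have a dense projective orbit, and a generic-stabilizer computation controls the invariant ring there, leaving the inclusion $\Sp_2\times\Sp_4<\SO_8$ recorded in the statement. When $\Y$ is a classical group acting on a wedge-square, symmetric-square, or half-spin module, the bound $\dim\X+\dim k[V]^\Y\ge\dim V$ from \eqref{dim.bound}, together with the evaluation of $\dim k[V]^\Y$ and density arguments of the type of Lemma~\ref{lem:spso} ($\X\SO(W)$ and $\X\Sp(W)$ are never dense in $\SL(W)$), rules out every non-simple $\X$.

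The step I expect to be the main obstacle is this final enumeration: for each candidate $(\X,\Y)$ one has to pin down $\dim k[V]^\Y$ --- equivalently the identity component of a generic $\Y$-stabilizer, using \cite{GLMS}, \cite{Hiss}, and the explicit invariant computations of \S\ref{same.egs} --- estimate $\dim k[V]^\X$ from \eqref{dim.eq}, and verify that equality singles out the exceptional configuration. A secondary subtlety is the treatment of non-maximal $\X$: one must confirm that pushing the equality of invariant dimensions along a chain $\X\le M<\Y$ and appealing to Theorem~\ref{distinctinvariants} (when $M$ is simple) or recursing (when it is not) never uncovers a new non-simple possibility concealed beneath an intermediate subgroup.
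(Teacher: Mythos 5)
Your overall strategy is in the right spirit --- reduce to a dense projective orbit and then enumerate the non-simple semisimple $\X$ with a dense orbit on $\PP(V)$ --- and your treatment of the $\dim k[V]^\Y = 1$ case would essentially land where the paper's does. But you have missed the key simplification that makes the paper's proof short, and your substitute for it contains a genuine gap.

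The paper's first move is to apply Seitz's Theorem 1 directly to the inclusion $\X < \Y$: since $\X$ is \emph{not} simple, it cannot appear in Seitz's Table 1, and therefore $\Y$ is forced to be one of $\SL(V)$, $\Sp(V)$, $\SO(V)$. This immediately gives $\dim k[V]^\Y \le 1$, so the entire branch $\dim k[V]^\Y > 1$ in your argument is vacuous, and one is instantly reduced to $\Y = \SO(V)$ with $\X$ inside a tensor-product subgroup $\Sp(W_1)\otimes\Sp(W_2)$ or $\SO(W_1)\otimes\SO(W_2)$. Your proposal never makes this observation; instead you attempt to bound $\dim V \le \dim \Y$ via Lemma~\ref{lem:unipotent} and Theorem~\ref{generic} and then dispose of $\dim k[V]^\Y > 1$ by appealing to Theorem~\ref{distinctinvariants} together with the assertion that ``the subgroups $\X$ furnished by \cite{seitzmem} are all simple.''

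That step does not go through as written. Theorem~\ref{distinctinvariants} is a statement about \emph{simple} $\X$, so it cannot be cited to rule out non-simple $\X$ with $\dim k[V]^\X = \dim k[V]^\Y > 1$. Likewise, Seitz's Table 1 by construction only lists simple subgroups, so ``Seitz furnishes only simple $\X$'' is not evidence that non-simple irreducible subgroups are absent for the relevant $(\Y, V)$ --- it merely reflects the scope of the table. To close this you would still need the paper's observation (or some substitute dimension argument for each of the finitely many $(\Y,V)$ with $\dim k[V]^\Y>1$). Since the paper's use of Seitz's Theorem 1 is exactly the missing ingredient, the cleanest fix is to adopt it as the opening step, after which the rest of your enumeration in the $\SO(V)$ case proceeds essentially as in the paper.
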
 

\begin{proof}   Assume that $\X$ is not simple. 
By \cite[Th.~1]{seitzmem}, it follows that $\Y=\SO(V)$,  $\Sp(V)$ or $\SL(V)$. 
In the last two cases,  $k[V]^\Y=k$, whence the result holds. 

Thus, we may assume that $\Y=\SO(V)$ and  $\dim V \ge 8$. 
   Then $\X \le Y:=\Sp(W_1) \otimes \Sp(W_2)$ or  $\SO(W_1) \otimes \SO(W_2)$
   and moreover $\X$ has a dense orbit on $\PP(V)$.   Assume that $m_1= \dim W_1 \le  m_2 =\dim W_2$.
   It follows that the stabilizer $Y_v$ of a generic $v \in V$
   has dimension equal to $\dim \SO_{m_2 - m_1}$ in the second case
   and equal to $(3/2)(\dim W_1) + \dim \Sp_{m_2 - m_1}$ in the first case.  
   
   Note that in the second case $m_1 \ge 3$.  Thus, we see that
   $\dim k[V]^Y =\dim V - \dim Y + \dim Y_v > 1 = \dim k[V]^\Y$
   unless we are in the first case with $m_1 = 2$ and $m_2 =2 $ or $4$.
   If $m_2 = 2$, then $\Y = \SO(V)$ is not simple.  
   If $m_2=4$, then in fact $\Sp_2 \otimes \Sp_4$ does have
   a dense orbit on $\PP(V)$ because $V$ is an internal Chevalley module \cite[Table I]{GLMS}.    The only possible proper irreducible subgroup
   of $Y$ is $\SL_2 \times \SL_2$ and is too small to have a dense orbit
   on $\PP(V)$.  
   \end{proof}

\section{Same rings of invariants, but without containment of groups}

We close our discussion of groups with the same invariants with an easy consequence of Theorem \ref{distinctinvariants}, where we drop the hypothesis that $G$ is contained in $H$, but we strengthen the hypothesis on the invariants to be that the rings $k[V]^G$ and $k[V]^H$ are the same. 

\begin{cor}
Assume that $G$ is a simple algebraic group acting
irreducibly on $V$ and that $\dim k[V]^G > 1$.  If
$H$ is a connected algebraic subgroup of $\SL(V)$ acting irreducibly on 
$V$ with $k[V]^H = k[V]^G$, then $H = G$ or one of the following holds,  up to a Frobenius twist and/or a graph automorphism: 
\begin{enumerate}
\item $\car k = 2$,  $H = \Sp_{2n}$, $G = \SO_{2n}$ with $n \ge 3$, and $V = L(\lambda_2)$.
\item  $\car k = 2$, $\dim V = 26$,  $G$ has type $D_4$ or $C_4$, and $H$ has type $F_4$.
\end{enumerate}
\end{cor}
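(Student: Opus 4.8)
The plan is to deduce this corollary from Theorem~\ref{distinctinvariants} by a short argument that removes the containment hypothesis. First I would observe that since $\dim k[V]^G > 1$, Proposition~\ref{few} (or just the tables) shows that $\car k = 2$ and $(G,V)$ is one of a short list of pairs; in particular $k[V]^G$ is not $k$ or $k[q]$, so in any application of Theorem~\ref{distinctinvariants} the cases \eqref{di.SL}, \eqref{di.Sp}, \eqref{di.SO} are automatically excluded and only the Table~\ref{sameinv} possibility \eqref{di.table} can occur. The strategy is then to produce a group containing both $G$ and $H$ and invoke Theorem~\ref{distinctinvariants} twice.

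The key step is this: let $M$ be a minimal closed connected subgroup of $\SL(V)$ containing $H$ that is either equal to $\SL(V)$ or is simple and acts irreducibly --- such an $M$ exists because we can take a maximal chain of irreducible closed connected overgroups and appeal to \cite[Th.~1]{seitzmem} to see that the maximal irreducible overgroups are classical-or-exceptional simple groups (or $\SL(V)$ itself). If $H = M$ then $H$ is simple and irreducible with $k[V]^H = k[V]^G$; by Proposition~\ref{few} applied to $H$, the pair $(H,V)$ also lies in the relevant table, and comparing the $(G,V)$ and $(H,V)$ entries with the same ring of invariants forces one of the listed conclusions (or $H=G$). Otherwise $H < M$, and since $\dim k[V]^H = \dim k[V]^M$ would follow once we know $k[V]^M = k[V]^G = k[V]^H$, I would instead argue directly: apply Theorem~\ref{distinctinvariants} to the inclusion $G < M'$ where $M'$ is a minimal irreducible overgroup of $G$ (which contains $H$ after conjugation is not needed --- rather, one works inside a fixed $M$ containing $G$); since $\dim k[V]^G > 1$, we are in case \eqref{di.table}, so $(G, M, V)$ appears in Table~\ref{sameinv} and in particular $M$ is simple with $k[V]^M = k[V]^G$. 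Now $G \le H \le M$ would give the result, but without that containment I instead use that $H$ and $G$ are both irreducible subgroups of the simple group $M$ with $k[V]^H = k[V]^M$; applying Theorem~\ref{distinctinvariants} to $H < M$ (when $H \ne M$) puts $(H, M, V)$ in Table~\ref{sameinv} as well, and reading off which pairs of table entries share a simple overgroup $M$ and the same ring of invariants leaves exactly the two stated possibilities, namely $\SO_{2n} \subset \Sp_{2n}$ inside a common overgroup with $V = L(\lambda_2)$ (case~1) and $D_4, C_4 \subset F_4$ with $\dim V = 26$ (case~2).

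To make the two applications of Theorem~\ref{distinctinvariants} legitimate I need $H \ne M$ and $G \ne M$; the case $H = M$ (or $G = M$) is handled by the Proposition~\ref{few} bookkeeping in the previous paragraph, where one simply matches table entries with equal invariant rings. I also need to handle the possibility that $V$ is tensor decomposable for $H$: then $\dim k[V]^H \le 1$ for the relevant $\SL(W)\otimes\SL(W)$ situation (Example~\ref{twisted.eg}), contradicting $\dim k[V]^G > 1$, so this does not arise. Finally I would check the two surviving cases are genuine: that $k[V]^{\SO_{2n}} = k[V]^{\Sp_{2n}}$ in characteristic~2 on $L(\lambda_2)$ is Example~\ref{SOSp.eg}, and that $k[V]^{D_4} = k[V]^{C_4} = k[V]^{F_4}$ on the $26$-dimensional module is Example~\ref{F4.eg}.

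The main obstacle I anticipate is the combinatorial matching: I must be sure that the only way two \emph{distinct} entries of the $\dim k[V]^G > 1$ part of Table~\ref{dim1}/Proposition~\ref{few} can have literally the same ring of invariants --- and sit inside a common simple irreducible overgroup of $\SL(V)$ --- is the $\SO/\Sp$ and the $D_4/C_4/F_4$ coincidences. This requires carefully cross-referencing Table~\ref{sameinv} (the $(G,H,V)$ triples) against the list of $(G,V)$ with $\dim k[V]^G > 1$, and confirming there is no third coincidence hiding among the half-spin or $\wedge^2$ entries; the half-spin cases are ruled out because for $D_{n}$ with $n = 5, 7$ the subgroup $\Spin_{n-1}$ has strictly larger-dimensional invariant ring, as recorded in the proof of Theorem~\ref{distinctinvariants}, so the only half-spin entry that survives is $D_6$, which already appears in the theorem's Table~\ref{sameinv} case and gives nothing new here.
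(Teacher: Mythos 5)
Your proof has a genuine gap at its central step: you never actually produce a group containing both $G$ and $H$ whose invariant ring you can compare. The paper's key observation — and it is a one-liner — is to take $Y := \langle G, H \rangle$, the closed subgroup generated by $G$ and $H$. Any $f \in k[V]^G = k[V]^H$ is fixed by both $G$ and $H$, hence by $Y$, so $k[V]^Y = k[V]^G$, and now $G < Y$ is a legitimate input to Theorem~\ref{distinctinvariants}, as is $H < Y$. Your argument instead wanders between a minimal simple overgroup $M$ of $H$ and a minimal irreducible overgroup $M'$ of $G$, never establishes that these are the same group (or that either contains both $G$ and $H$), and — crucially — never verifies the hypothesis $\dim k[V]^G = \dim k[V]^{M'}$ before applying Theorem~\ref{distinctinvariants}. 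That equality is not automatic for an arbitrary overgroup of $G$; it is exactly what the ``generated group'' trick gives you for free.

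A few further problems. Your opening claim that ``since $\dim k[V]^G > 1$, Proposition~\ref{few} shows that $\car k = 2$ and $(G,V)$ is one of a short list'' is backwards: Proposition~\ref{few} classifies the pairs with $\dim k[V]^G \le 1$, so $\dim k[V]^G > 1$ puts you in the \emph{complement} of a short list, which is not a short list. The fact that $\car k = 2$ in the conclusion comes out of Theorem~\ref{distinctinvariants} applied to $G < Y$, not from Proposition~\ref{few}. Also, your treatment of semisimple non-simple $H$ is incomplete: you wave at the tensor-decomposable case via Example~\ref{twisted.eg}, but the correct tool (and what the paper uses) is Proposition~\ref{ss}, which handles a semisimple irreducible $\X < \Y$ with $\Y$ simple and shows $\X$ must be simple once $\dim k[V]^\X > 1$. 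Once you have $Y$ in hand, the rest of your table-matching is roughly the right bookkeeping, but the argument as written does not get off the ground without the common overgroup.
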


\begin{proof} Let $Y$ be the group generated by $G$ and $H$ (which we assume is distinct from $G$).  Then $G < Y$ have the same
invariants and so Theorem \ref{distinctinvariants} implies that $\car k = 2$ and $Y = \Sp$
and $V = L(\lambda_2)$, or $Y = F_4$ and $\dim V = 26$. Also by Theorem \ref{distinctinvariants}, there is no
other simple $H$ and by Proposition \ref{ss} (since $G$ is simple),
there is no semisimple example either.
\end{proof}

\section{Main theorem} 

We now fix a pair $(G, V)$ and ask whether there is some $f \in k[V]^G$ such that $G$ is the identity component of the stabilizer of $f$.  Trivially, we must exclude those representations where $k[V]^G = k$ or $k[q]$ for a quadratic form $q$ (when $G \ne \SO(V)$); these make a short list that we provide in Tables \ref{dim0} and \ref{dim1}.  These lists are well known in characteristic zero---a convenient reference is the table at the end of \cite{PoV}.

In the following theorem, we write $\kalg$ for an algebraic closure of a field $k$.

\begin{thm} \label{MT}
Let $G < \GL(V)$ be an absolutely simple algebraic group over a field $k$ such that $V$ is absolutely irreducible and $(G(\kalg), V \ot \kalg)$ is not in Table \ref{dim0} nor Table \ref{dim1} up to Frobenius twists and graph automorphisms.  Then exactly one of the two following statements holds:
\begin{enumerate}
\item \label{MT.f} There exists a homogeneous $f \in k[V]^G$ such that the naive stabilizer of $f$ in $\GL(V \ot \kalg)$ 
has identity component $G(\kalg)$.
\item $(G(\kalg), V \ot \kalg)$ is a Frobenius twist of a representation from Table \ref{sameinv}.
\end{enumerate}
\end{thm}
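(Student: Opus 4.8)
The plan is to deduce Theorem~\ref{MT} from the classification in Theorem~\ref{distinctinvariants}, the finiteness of overgroups in Proposition~\ref{over}, a comparison of Hilbert functions, and a descent argument. Since $G$ acts irreducibly on $V$ it lies in no proper parabolic subgroup of $\SL(V)$, so Proposition~\ref{over} applies over $\kalg$: there are only finitely many closed connected subgroups $H_1,\dots,H_m$ of $\SL(V\ot\kalg)$ with $G(\kalg)<H_i<\SL(V\ot\kalg)$. I first dispose of mutual exclusivity. Suppose $(G(\kalg),V\ot\kalg)$ is a Frobenius twist of an entry of Table~\ref{sameinv}; untwisting changes neither the subgroup of $\GL(V\ot\kalg)$ nor, up to isomorphism, $G$, so part~\eqref{di.table} of Theorem~\ref{distinctinvariants} supplies a closed connected $H$ with $G(\kalg)<H\le\SL(V\ot\kalg)$ and $\kalg[V\ot\kalg]^{G}=\kalg[V\ot\kalg]^{H}$. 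Then every homogeneous $f\in k[V]^G$ is $H$-invariant over $\kalg$, so its naive stabilizer in $\GL(V\ot\kalg)$ contains $H$ and hence has identity component properly containing $G(\kalg)$; thus statement~\eqref{MT.f} fails whenever the second alternative holds.

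For the converse, assume $(G(\kalg),V\ot\kalg)$ is not a Frobenius twist of any entry of Table~\ref{sameinv}, and set $R:=\kalg[V\ot\kalg]^{G(\kalg)}$. For each $i$ the inclusion $\kalg[V\ot\kalg]^{H_i}\subseteq R$ is strict in Krull dimension: equality would, by Theorem~\ref{distinctinvariants}, force case~\eqref{di.SL}, \eqref{di.Sp}, \eqref{di.SO}, or~\eqref{di.table}; the first three put $(G(\kalg),V\ot\kalg)$ in Table~\ref{dim0} or~\ref{dim1}, contrary to the hypothesis of the theorem, and the last puts it in Table~\ref{sameinv}, contrary to our assumption. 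Since $(G,V)$ avoids Table~\ref{dim0}, $R$ is a finitely generated graded domain of Krull dimension $n\ge1$, and, as such a ring has Hilbert function of growth order $d^{\,n-1}$, I can fix one degree $d\ge1$ with $\dim R_d>\max_i\dim(\kalg[V\ot\kalg]^{H_i})_d$ (when $n=1$ this is immediate: $R=\kalg[f]$ and every $\kalg[V\ot\kalg]^{H_i}=\kalg$, so $d=\deg f$ works). Now suppose $f\in R_d$ and the identity component $S$ of its naive stabilizer in $\GL(V\ot\kalg)$ properly contains $G(\kalg)$. As $V\ot\kalg$ is an irreducible $S$-module, the unipotent radical of $S$ fixes a nonzero $S$-submodule, which is then all of $V\ot\kalg$, so $S$ is reductive; and as $f$ is nonconstant homogeneous, $S$ contains no central torus of scalars, so $S$ is semisimple, hence a closed connected subgroup of $\SL(V\ot\kalg)$ strictly between $G(\kalg)$ and $\SL(V\ot\kalg)$ --- that is, $S=H_i$ for some $i$ --- and $f\in(\kalg[V\ot\kalg]^{H_i})_d$. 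Thus the ``bad'' locus inside $R_d$ is $\bigcup_i(\kalg[V\ot\kalg]^{H_i})_d$, a union of finitely many proper linear subspaces; its complement is nonempty, and every $f$ in it has naive stabilizer with identity component exactly $G(\kalg)$.

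It remains to descend to $k$. By \cite[Lemma~2]{Sesh:GR}, $R_d=k[V]^G_d\ot\kalg$, so the bad locus is the set of $\kalg$-points of a closed subvariety $B_d$ of the affine space $k[V]^G_d$, and $B_d$ is defined over $k$ because $G$ and $V$ are, so the condition cutting it out is Galois-stable. If $k$ is infinite, the nonempty open complement of $B_d$ has a $k$-point and we are done. If $k=\FF_q$, note $m$ does not depend on $d$, whereas the codimension $c(d)=\dim R_d-\max_i\dim(\kalg[V\ot\kalg]^{H_i})_d$ grows without bound when $n\ge2$ (the first term has growth order $d^{\,n-1}$, the second a strictly smaller order); choosing $d$ with $q^{c(d)}>m$, the $\FF_q$-points of the at most $m$ subspaces of codimension $\ge c(d)$ number at most $m\,q^{\dim R_d-c(d)}<q^{\dim R_d}$, which is the cardinality of $k[V]^G_d$, so $B_d$ does not exhaust $k[V]^G_d$ and a good $f$ exists (in the residual case $n=1$ the bad locus is $\{0\}$, so any nonzero $f\in R_d$ works). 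Its naive stabilizer over $\kalg$ then has identity component $G(\kalg)$, establishing~\eqref{MT.f}; together with the exclusion of Tables~\ref{dim0} and~\ref{dim1} this shows exactly one of the two statements holds. The substantive input is Theorem~\ref{distinctinvariants}; the only delicate bookkeeping is consolidating the finitely many forbidden overgroups into a single degree $d$ and, over a finite field, the accompanying point count.
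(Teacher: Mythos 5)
Your proof is correct and reaches the conclusion by a genuinely different construction from the paper's. Both arguments begin with Proposition~\ref{over} (finitely many closed connected overgroups of $G(\kalg)$ in $\SL(V\ot\kalg)$) and Theorem~\ref{distinctinvariants} (each such overgroup has a strictly smaller ring of invariants). From there the paper proceeds multiplicatively: it fixes a finite Galois extension $L/k$ over which all the overgroups are defined, picks orbit representatives $H_1,\ldots,H_r$ under the Galois group $\G$, chooses for each $i$ a homogeneous $f_i\in k[V]^G$ that is not $H_i$-invariant, and forms $f:=\prod_i\prod_{\sigma\in\G}\sigma(f_i)$. The crucial observation is that a semisimple group has no nontrivial characters, so an $H_i$-invariant polynomial has $H_i$-invariant irreducible factors; since $f_i$ divides $f$, this forces $f$ not to be $H_i$-invariant, and $f\in k[V]$ by construction. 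To handle imperfect $k$, the paper then raises $f$ to a $p$-power. Your argument is additive rather than multiplicative: you consolidate the finitely many forbidden overgroups into a single degree $d$ by comparing Hilbert functions, exploiting that the Krull-dimension drop from Theorem~\ref{distinctinvariants} makes $\dim R_d$ eventually dominate all the $\dim(\kalg[V\ot\kalg]^{H_i})_d$, and then produce a $k$-rational $f$ by Zariski density of $k$-points in affine space when $k$ is infinite, or by a cardinality count when $k$ is finite. Each route buys something: the paper avoids the Hilbert-function asymptotics and the point count, at the cost of the Galois-product step and the separate perfect/imperfect dichotomy; your route handles infinite imperfect $k$ for free (density of $k$-rational points does not require $B_d$ to descend to $k$, only to be a proper closed $\kalg$-subset) at the cost of the finite-field bookkeeping. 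Your opening paragraph proving mutual exclusivity explicitly is a small improvement, as the paper leaves that direction implicit.
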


\begin{table}[bt]
\begin{center}
\begin{tabular}{cccccc}
$G$&$H$&$\dim V$&$\car k$&degrees&see \\ \hline
$\Spin_{11}$&$\HSpin_{12}$&32&all&$\begin{cases} \scriptstyle{4}&\scriptstyle{\text{if $\car k \ne 2$}}\\ \scriptstyle{2} & \scriptstyle{\text{if $\car k = 2$}}\end{cases}$& \ref{spin12.eg} \\
$\PGL_3$&$G_2$&7&3&2&\ref{A2G2.eg} \\
$\SO_{2n}$ ($n \ge 3$)&$\Sp_{2n}$&$\begin{cases} \scriptstyle{2n^2 - n - 2}& \scriptstyle{\text{if $n$ even}}\\ \scriptstyle{2n^2 -n-1} & \scriptstyle{\text{if $n$ odd}} \end{cases}$&2&$\begin{cases}?\\2, 3, \ldots, n\end{cases}$& \ref{self.1}, \ref{SOSp.eg} \\
$\SO_8$ or $\Sp_8$&$F_4$&26&2&2, 3& \ref{F4.eg}\\
$\SL_n$ & $\SL_n \otimes \SL_n$ & $n^2$ & $ \ne 0$ & $n$& \ref{twisted.eg}
\end{tabular}
\medskip
\caption{Representations excluded from Theorem \ref{MT}} \label{sameinv}
\end{center}
\end{table}

The theorem gives tight control over the stabilizer of $f$, because the normalizer $N$ of $G$ in $\GL(V)$ is known precisely.  It has identity component $N^\circ$ generated by $G$ and the scalar matrices, and $N$ itself can hardly be much larger.  Indeed, 
there is an inclusion of $N/N^\circ$ into the automorphism group of the Dynkin diagram of $G$, which is $1$ (type $A_1$, $B$, $C$, $E_7$, $E_8$, $F_4$, or $G_2$), $\Z/2$ (type $A_n$ for $n \ge 2$, $D_n$ for $n \ge 5$, or $E_6$), or has order  6 (type $D_4$), see e.g.\ \cite[\S16.3]{Sp:LAG}.  In any concrete case, one is reduced to checking which representatives of $N/N^\circ$ in $N$ stabilize $f$.

The shortness of the proof hides the fact that it relies on all of the results from sections \ref{finitely} through \ref{same}.

\begin{proof}[Proof of Theorem \ref{MT}]
We assume that $(G(\kalg), V\ot \kalg)$ does not appear in Tables \ref{dim0}, \ref{dim1}, nor \ref{sameinv}, and we will produce an $f$ as in \eqref{MT.f}.

Suppose first that $k$ is perfect.
By Proposition \ref{over} there are only finitely many closed connected proper overgroups of $G \times \kalg$ in $\SL(V \ot \kalg)$; write $\cO$ for the set of such.  The $k$-automorphisms of $\kalg$ permute the elements  of $\cO$ so there is a finite Galois extension $L$ of $k$ such that all $L$-automorphisms of $\kalg$ fix every element of $\cO$.  Write $\Y_1, \ldots, \Y_r$ for representatives of the orbits in $\cO$ under the group $\G$ of $k$-automorphisms of $L$. 

As $(G, V)$, by hypothesis, does not appear in Table \ref{sameinv}, Theorem \ref{distinctinvariants} gives that $\dim L[V]^{\Y_i} < \dim L[V]^G$ for each $i$, and we can pick a homogeneous polynomial $f_i \in k[V]^G \setminus k[V]^{\Y_i}$.  Define
\[
f := \prod_{i = 1}^r \prod_{\s \in \G} \s(f_i).
\]
As $\Y_i$ is semisimple, it has no nontrivial characters, so the irreducible factors (in $L[V]$) of a $\Y_i$-invariant function are also $\Y_i$-invariant, hence $f$ is not invariant under any of the $\Y_i$'s.  As $f$ is fixed by every element of $\G$, it belongs to $k[V]$, and the theorem is proved in this case.

Now suppose that $k$ is imperfect.  The case where $k$ is prefect provides an $f \in k^{p^{-\infty}}[V]^G$ as in \eqref{MT.f}.  Writing $f$ as a polynomial in a dual basis for $V$, we find finitely many coefficients in $k^{p^{-\infty}}$ so there is a positive integer $s$ such that $f^{p^s}$ belongs to $k[V]$ and has stabilizer with identity component $G(\kalg)$.
\end{proof}

Theorem \ref{MT} can be compared with the following famous theorem of Chevalley \cite[5.5.3]{Sp:LAG}: if $G$ is a subgroup of a linear algebraic group $G'$, then there is a finite-dimensional representation $V$ of $G'$ so that $G$ is precisely the stabilizer in $G'$ of a point in $\PP(V)$.  Chevalley's result holds in much greater generality ($G$ need not be simple, nor even reductive), but Theorem \ref{MT} says that for a typical irreducible and tensor indecomposable representation $V$ there is a $G$-fixed point $[f]$ in $\PP(\Sym^d(V))$ for some $d$ such that the identity component of the stabilizer of $[f]$ is $G$.

\section{Realizations of simple groups as automorphism groups} \label{arbfield.sec}

There are several general-purpose mechanisms for realizing a semisimple group (up to isogeny, taking identity components, and avoiding some bad characteristics or special cases) as automorphism groups of some algebraic structure, namely as the automorphism group of:
\begin{itemize}
\item the spherical building associated with $G$ \cite{Ti:BN};
\item a twisted flag variety of $G$ \cite{Dem:aut}; or
\item the Lie algebra of $G$ \cite{St:aut}.
\end{itemize}
These three interpretations are easiest to understand in case the group $G$ is split, but it is easy to see via twisting that these interpretations extend to describe also non-split semisimple algebraic groups over any field.  These bullets only give \emph{adjoint} groups.  One can also interpret $G$ as the automorphism group of
\begin{itemize}
\item some finite-dimensional $k$-algebra \cite{GordeevPopov}.  This construction is more precise than the previous one in that it gives $G$ on the nose (and not just up to isogeny) but also less precise in that one does not have control of the algebra.  For $E_8$, this interpretation just gives that $E_8$ is the automorphism group of its Lie algebra, as in the previous bullet.
\end{itemize}
Our Theorem \ref{MT} adds an additional item to this list:
\begin{itemize}
\item a homogeneous function $f$ on a representation $V$ of $G$.  Here one can pick a faithful and absolutely irreducible representation $V$ (if one exists), and get $G$ on the nose (and not just up to isogeny).
\end{itemize}
Although $f$ is not uniquely determined, there is still enough of a connection between $G$ and $f$ so as to play properties of one off against the other.  Here are three examples of such.

First, we can relate isotropy of $G$ (i.e., whether $G$ contains a nonzero split $k$-torus) with isotropy of $f$ (whether there is a nonzero $v \in V$ such that $f(v) = 0$).

\begin{lem}
Suppose $G$ is a linear algebraic group acting with finite kernel on a vector space $V$ over an infinite field $k$ and $f \in k[V]^G$ is homogeneous and non-constant.  If $G$ is isotropic, then $f$ is isotropic.
\end{lem}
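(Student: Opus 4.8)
The plan is to exploit a one-dimensional split subtorus of $G$ together with the tension between homogeneity and $G$-invariance of $f$. Since $G$ is isotropic it contains a nonzero $k$-split torus, and hence a closed $k$-subgroup $S \cong \mathbb{G}_m$. Because $S$ is $k$-split, the representation $V$ decomposes over $k$ into weight spaces $V = \bigoplus_{n \in \mathbb{Z}} V_n$, where $s \in S$ acts on $V_n$ by multiplication by $s^n$. The hypothesis that $G$ acts with finite kernel forces $S$ to act nontrivially on $V$ (a one-dimensional torus is not contained in a finite subgroup scheme), so $V_n \neq 0$ for some $n \neq 0$; fix such an $n$ and a nonzero vector $v \in V_n$.

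Next I would compute the value $f(v)$ in two ways. Put $e := \deg f$, which is at least $1$ since $f$ is non-constant. For $t \in S(k) = \kx$ we have $t \cdot v = t^n v$, so on one hand $G$-invariance of $f$ gives $f(t\cdot v) = f(v)$, while on the other hand homogeneity gives $f(t \cdot v) = f(t^n v) = t^{ne} f(v)$. Hence $(t^{ne} - 1) f(v) = 0$ for every $t \in \kx$. Since $ne \neq 0$ and $k$ is infinite, the equation $T^{ne} = 1$ has only finitely many solutions in $k$, so we may choose $t \in \kx$ with $t^{ne} \neq 1$ and conclude $f(v) = 0$. Thus $v$ is a nonzero vector in the zero locus of $f$, i.e., $f$ is isotropic.

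There is essentially no obstacle here; the only points that need a little care are the roles of the two hypotheses. Isotropy \emph{over $k$} (not merely over $\kalg$) is exactly what guarantees that the weight-space decomposition of $V$ under $S$ is defined over $k$, so that the weight vector $v$ can be taken $k$-rational; and the finiteness of the kernel is exactly what guarantees that some nonzero weight $n$ actually occurs, which is what makes the exponent $ne$ nonzero. Without either hypothesis the statement fails (a trivial action, or a torus acting only through weight $0$, gives $G$-invariant anisotropic forms), so the proof must genuinely use both.
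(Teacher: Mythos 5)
Your proof is correct and follows the same idea as the paper's: use a nontrivial split $k$-torus in $G$, which acts nontrivially on $V$ because the kernel is finite, and play $G$-invariance off against homogeneity. The paper's version is terser (it says only "there is a nonzero $v \in V$ that is not fixed by the action of $T$. But $f$ is homogeneous and $T$-invariant, so $f(v) = 0$"), and your write-up, by restricting to a $\mathbb{G}_m$ and explicitly choosing $v$ a weight vector of \emph{nonzero} weight, makes the final deduction $f(v)=0$ airtight, since for a general non-fixed $v$ (with a nonzero weight-$0$ component) that conclusion does not follow immediately.
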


\begin{proof}
Put $T$ for the nontrivial $k$-split torus in $G$.  Its image in $\GL(V)$ is also 
a nontrivial split $k$-torus in $\GL(V)$, so there is a nonzero $v \in V$ that is not fixed by the action of $T$.  But $f$ is homogeneous and $T$-invariant, so $f(v) = 0$.
\end{proof}

This relationship between the isotropy of $G$ and isotropy of $f$ is similar to what one finds for the Tits algebras of $G$.  (Recall that the Tits algebras defined in \cite{Ti:R} or \cite{KMRT} are classes in the Brauer group of finite separable extensions of $k$ corresponding to dominant weights of $G$.  In case $G$ is the spin or special orthogonal group of a quadratic form, the only possibly nontrivial Tits algebra is the Brauer class of the Clifford algebra or the even part of the Clifford algebra.)  One knows that if $G$ is isotropic, then the Tits algebras cannot have too large an index, where the precise bounds depend on the maximal split torus in $G$ and the dominant weight corresponding to the Tits algebra.  There is no corresponding converse implication, in that the Tits algebras may all be zero and yet the group can be isotropic, which occurs for example when $G$ is any of the compact real groups $\Spin(8n)$ for $n \ge 1$, $G_2$, $F_4$, or $E_8$.  Nonetheless, the one-way implication between isotropy of $G$ and small indexes for the Tits algebras, notably exploited by Merkurjev in \cite{M:simple} to disprove Kaplansky's conjecture, is now a standard tool in the study of semisimple algebraic groups, quadratic forms, division algebras, etc., over arbitrary fields.  See the survey \cite{Hoff:usurv} or papers such as \cite{Hoff:EL}, \cite{Hoff:pyth}, \cite{Izh:9}, \cite{GS:quats}, and \cite{Meyer:inf}.

Second, Theorem \ref{MT} gives a way to study $k$-forms of $G$.  Recall that an algebraic group $G'$ over $k$ (resp., $f' \in k[V]$)  is called a \emph{$k$-form of $G$} (resp., \emph{of $f$}) if there is an extension field $L/k$ so that $G' \times L$ is isomorphic to $G \times L$ (resp., there is a $g \in \GL(V \ot L)$ so that $f' = f \circ g$ in $L[V]$).
If $f$ is \emph{$k$-similar} to a homogenous form $f'$---i.e., if there is a $g \in \GL(V)$ and $\mu \in \kx$ so that $f' = \mu f \circ g$ in $k[V]$---then obviously $f$ and $f'$ have isomorphic stabilizers in $\GL(V \ot L)$ for every extension $L$ of $k$.  That is, taking identity components of stabilizers gives an arrow
\begin{equation} \label{similarity}
\fbox{\parbox{1in}{$k$-forms of $f$\newline up to $k$-similarity}} \to \fbox{\parbox{1.2in}{$k$-forms of $G$\\ up to $k$-isomorphism}}
\end{equation}
that is functorial in $k$.  This arrow is well known to be injective (but typically not surjective) in the case where $G$ is the special orthogonal group of a quadratic form $f$.  It is bijective in case $G$ has type $E_8$ and $f$ is the octic form studied in sections \ref{E8.sec} and \ref{E8.adj.sec} or the cubic form studied in section \ref{E8.3875.sec}.  In general, the arrow \eqref{similarity} may be injective, surjective, both, or neither, and a careful choice of $V$ and $f$ can guarantee that it is bijective; this question is studied in \cite{BermudezRuozzi}.

Third, for $S$ the scheme-theoretic stabilizer of $f$, faithfully flat descent trivially gives a bijection
\begin{equation} \label{torsors}
H^1(k, S) \leftrightarrow \fbox{$k$-forms of $f$ up to $k$-isomorphism}
\end{equation}
that is functorial in $k$, where $H^1$ denotes flat cohomology.  By examining the number of independent parameters appearing in explicit $k$-forms of $f$, one can in principle give an upper bound on the essential dimension of the group $S$ as defined in \cite{Rei:ICM} or \cite{Rei:whatis}.  This is a usual method for giving an upper bound on the essential dimension of an orthogonal group.  Our results here give an effective means for describing $S$ as an algebraic group, thereby allowing one to use \eqref{torsors} to prove statements about essential dimensions of familiar groups.  For example, can studying $k$-forms of an octic as in section \ref{E8.sec} or \ref{E8.adj.sec} give a better upper bound on the essential dimension of $E_8$ over $\C$?
The strongest result currently known is that the essential dimension is at most 231 \cite[Cor.~1.4]{Lemire}, which is quite far from the lower bound of 9 \cite{Rei:ICM}.

We expect that the homogenous forms provided by our Theorem \ref{MT} will provide many new avenues for studying simple algebraic groups over arbitrary fields, since the three relationships we have chosen to highlight here are analogous to previously known tools that have already been widely exploited.

\medskip
{\small\textbf{Acknowledgements.} {We thank Brian Conrad, Benedict Gross, Ben Martin, George McNinch, Gopal Prasad, Alexander Premet, and Jean-Pierre Serre  for their comments.  Garibaldi's research was partially supported by NSF grant DMS-1201542, the Charles T.~Winship Fund, and Emory University.
Guralnick was partially supported by the NSF
  grants DMS-1001962, DMS-1302886 and  Simons Foundation fellowship 224965.
  He also thanks IAS, Princeton. 
  }}

\newcommand{\etalchar}[1]{$^{#1}$}
\providecommand{\bysame}{\leavevmode\hbox to3em{\hrulefill}\thinspace}
\providecommand{\MR}{\relax\ifhmode\unskip\space\fi MR }
\providecommand{\MRhref}[2]{%
  \href{http://www.ams.org/mathscinet-getitem?mr=#1}{#2}
}
\providecommand{\href}[2]{#2}

\enddocument